\documentclass[a4paper]{amsart}

\usepackage{eucal}

\usepackage[reftex]{theoremref}

\usepackage{tikz-cd}


\numberwithin{equation}{section}

\theoremstyle{plain} \newtheorem{theorem}{Theorem}[section]
\newtheorem{corollary}[theorem]{Corollary}
\newtheorem{lemma}[theorem]{Lemma}
\newtheorem{proposition}[theorem]{Proposition}
\theoremstyle{definition} \newtheorem*{conventions}{Conventions}
\newtheorem{definition}[theorem]{Definition}
\newtheorem{defprop}[theorem]{Definition-Proposition}
\theoremstyle{remark} \newtheorem{remark}[theorem]{Remark}


\newcommand{\ZZ}{\mathbb{Z}}

\newcommand{\FF}{\mathbb{F}} \newcommand{\GG}{\mathbb{G}}

\usepackage{bbm} \newcommand{\id}{\mathbbm{1}}

\newcommand{\kk}{R}

\newcommand{\cf}{\emph{cf.}~}

\newcommand{\lperp}[1]{{}^{\perp_{#1}}}

\newcommand{\set}[1]{\left\{\,#1\,\right\}}
\newcommand{\setP}[2]{\set{#1\,\middle|\,#2}}

\newcommand{\cat}[1]{\mathcal{#1}} \newcommand{\A}{\cat{A}}
\newcommand{\B}{\cat{B}} 
\newcommand{\Q}{\cat{Q}} \newcommand{\M}{\cat{M}}
\newcommand{\U}{\cat{U}} \newcommand{\X}{\cat{X}}
\newcommand{\op}{\mathrm{op}}

\newcommand{\bul}{\bullet}

\newcommand{\smod}{\underline{\mod}\,}
\renewcommand{\mod}{\operatorname{mod}}

\DeclareSymbolFont{sfoperators}{OT1}{cmss}{m}{n}
\DeclareSymbolFontAlphabet{\mathsf}{sfoperators}
\makeatletter\def\operator@font{\mathgroup\symsfoperators}\makeatother

\DeclareMathOperator{\Mod}{Mod}

\DeclareMathOperator{\add}{add} \DeclareMathOperator{\proj}{proj}

\DeclareMathOperator{\coker}{coker}

\DeclareMathOperator{\End}{End} \DeclareMathOperator{\Hom}{Hom}
\DeclareMathOperator{\Ext}{Ext} \DeclareMathOperator{\Tor}{Tor}

\DeclareMathOperator{\Tr}{Tr}

\DeclareMathOperator{\rad}{rad}

\DeclareMathOperator{\gldim}{gl.dim}
\DeclareMathOperator{\domdim}{dom.dim} 

\DeclareMathOperator{\pdim}{proj.dim}


\begin{document}

\date{\today}
  
\title{Higher Auslander correspondence for dualizing $\kk$-varieties}

\author[O.~Iyama]{Osamu Iyama}
\address[Iyama]{Graduate School of Mathematics\\Nagoya University\\
  Chikusa-ku, Furo-cho\\
  Nagoya, 464-8602\\
  JAPAN} \email{iyama@math.nagoya-u.ac.jp}
\urladdr{http://www.math.nagoya-u.ac.jp/~iyama/}

\author[G.~Jasso]{Gustavo Jasso}
\address[Jasso]{Mathematisches Institut\\
  Universit\"at Bonn\\
  Endenicher Allee 60\\
  D-53115 Bonn\\
  GERMANY} \email{gjasso@math.uni-bonn.de}
\urladdr{https://gustavo.jasso.info}

\begin{abstract}
  Let $\kk$ be a commutative artinian ring. We extend higher Auslander
  correspondence from Artin $\kk$-algebras of finite representation
  type to dualizing $\kk$-varieties. More precisely, for a positive
  integer $d$, we show that a dualizing $\kk$-variety is $d$-abelian
  if and only if it is a $d$-Auslander dualizing $\kk$-variety if and
  only if it is equivalent to a $d$-cluster-tilting subcategory of the
  category of finitely presented modules over a dualizing
  $\kk$-variety.
\end{abstract}

\subjclass[2010]{16G10 (primary); 18A25 (secondary)}

\keywords{Artin algebra; dualizing $\kk$-variety; Auslander algebra;
  higher Auslander--Reiten theory; cluster-tilting; d-abelian
  category}

\thanks{The authors would like to thank Julian K\"ulshammer for
  suggesting to include a characterization of
  $d\ZZ$-cluster-tilting subcategories and for his comments on a previous
  version of this article. These acknowledgement is extended to
  Chrysostomos Psaroudakis for bringing the article
  \cite{beligiannis_relative_2015} to our attention. The first author
  is supported by JSPS Grant-in-Aid for Scientific Research (B)
  24340004, (C) 23540045 and (S) 15H05738.}

\maketitle

\section{Introduction}

Throughout this article we fix a commutative artinian ring $\kk$.
Recall that an Artin $\kk$-algebra is an $\kk$-algebra which is a
finitely generated $\kk$-module. An Artin $\kk$-algebra $A$ is an
Auslander algebra if
\[
  \gldim A\leq 2\leq \domdim A,
\]
where $\domdim A$ is the dominant dimension of $A$ in the sense of
Tachikawa~\cite{tachikawa_dominant_1964}.  The notion of dominant
dimension was further developed by Auslander's school in their
beautiful theory~\cite{auslander_stable_1969,fossum_trivial_1975,auslander_k-gorenstein_1994,auslander_syzygy_1996} of Auslander--Gorenstein rings.
One of the most important aspects of Auslander--Gorenstein rings was
already given in \cite{auslander_representation_1971}, where Auslander
established a one-to-one correspondence between Auslander algebras and
Artin algebras of finite representation type up to Morita equivalence.

It is natural to extend this correspondence to arbitrary Artin
algebras. This extension is better expressed in the language of
dualizing $\kk$-varieties, which are additive $R$-categories $\A$
enjoying a certain duality between finitely presented $\A$-modules and
finitely presented $\A^\op$-modules. One can think of a dualizing
$\kk$-variety as an analog of the category of finitely generated
projective modules over an Artin algebra, but with possibly infinitely
many indecomposable objects up to isomorphism. Auslander
correspondence can be extended to the following characterization of
categories of finitely presented modules over dualizing
$\kk$-varieties. Although this characterization should be well known
to specialists, we did not find it in the literature. We refer the
reader to Section \ref{sec:preliminaries} for the definitions of
dualizing $\kk$-variety and Auslander dualizing $\kk$-variety.

\begin{theorem}
  \th\label{main_thm_d1} Let $\A$ be a dualizing $\kk$-variety. Then,
  the following statements are equivalent.
  \begin{enumerate}
  \item The category $\A$ is abelian.
  \item There exist a dualizing $\kk$-variety $\B$ and an equivalence
    $\A\cong\mod\B$.
  \item The category $\A$ is an Auslander dualizing $\kk$-variety.
  \end{enumerate}
\end{theorem}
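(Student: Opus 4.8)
The plan is to prove the cycle of implications $(a)\Rightarrow(b)\Rightarrow(c)\Rightarrow(a)$; the implication $(b)\Rightarrow(a)$ then follows formally, and is in any case immediate since $\mod\B$ is abelian for every dualizing $\kk$-variety $\B$. For $(a)\Rightarrow(b)$ I would set $\B:=\proj\A$, the full subcategory of projective objects, and consider the restricted Yoneda functor
\[
  \A\longrightarrow\mod\B,\qquad X\longmapsto\Hom_\A(-,X)|_{\B}.
\]
The one non-formal input, and the step I expect to be the main obstacle, is to show that an \emph{abelian} dualizing $\kk$-variety has enough projective objects: this is genuinely a consequence of the dualizing axioms (a general $\Hom$-finite abelian $\kk$-category may have no nonzero projectives at all) and must be teased out of them by hand. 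Granting this, a projective presentation $P_1\to P_0\to X\to 0$ in $\A$ exhibits $\Hom_\A(-,X)|_{\B}$ as the cokernel of the morphism of representable functors $\Hom_\B(-,P_1)\to\Hom_\B(-,P_0)$, so the functor above is well defined; it is fully faithful by the usual formula for morphisms out of a cokernel of representables together with the universal property of $P_1\to P_0\to X\to 0$; and it is essentially surjective because every object of $\mod\B$ is by construction such a cokernel, whose defining morphism lifts to $\A$ as $\B=\proj\A$. One also checks that $\B$ is again a dualizing $\kk$-variety, which gives $(b)$.

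For $(b)\Rightarrow(c)$, write $\A\cong\mod\B$. To see $\gldim\A\leq 2$: a finitely presented functor $F$ on the abelian category $\mod\B$ has a projective presentation $\Hom_{\mod\B}(-,X_1)\to\Hom_{\mod\B}(-,X_0)\to F\to 0$ induced by a morphism $f\colon X_1\to X_0$ of $\mod\B$, and, $\mod\B$ being abelian, the sequence $0\to\Hom_{\mod\B}(-,\ker f)\to\Hom_{\mod\B}(-,X_1)\to\Hom_{\mod\B}(-,X_0)$ is exact, so the second syzygy of $F$ is the representable functor $\Hom_{\mod\B}(-,\ker f)$ and $\pdim F\leq 2$. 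To see $\domdim\A\geq 2$: every object $X$ of $\mod\B$ embeds into a two-term injective copresentation $0\to X\to Z^0\to Z^1$ (using that $\mod\B$ is again a dualizing $\kk$-variety, hence has enough injectives), and applying $\Hom_{\mod\B}(-,-)$ yields an exact sequence $0\to\Hom_{\mod\B}(-,X)\to\Hom_{\mod\B}(-,Z^0)\to\Hom_{\mod\B}(-,Z^1)$ whose two rightmost terms are projective–injective in $\mod(\mod\B)$; the latter holds because, by the duality of $\B$, the representable functor $\Hom_{\mod\B}(-,Z)$ is injective whenever $Z$ is injective in $\mod\B$. Hence the minimal injective coresolution of $\Hom_{\mod\B}(-,X)$ begins with two projective–injective terms.

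For $(c)\Rightarrow(a)$, the condition $\gldim\A\leq 2$ already provides kernels and cokernels in $\A$: for a morphism $f\colon X\to Y$, the second syzygy of the $\A$-module $\coker\bigl(\Hom_\A(-,X)\to\Hom_\A(-,Y)\bigr)$ is projective, and the object of $\A$ it represents is a kernel of $f$; cokernels follow dually, using $\gldim(\mod\A^{\op})=\gldim(\mod\A)$. The condition $\domdim\A\geq 2$ is what upgrades this to abelianness: via the start of the minimal injective coresolution of $\Hom_\A(-,X)$, every object $X$ sits in an exact sequence $0\to X\to M^0\to M^1$ in which $\Hom_\A(-,M^0)$ and $\Hom_\A(-,M^1)$ are injective in $\mod\A$, and chasing this sequence (together with the corresponding statement for $\A^{\op}$) shows that the canonical comparison morphism $\operatorname{coim}f\to\operatorname{im}f$ is an isomorphism for every $f$, so that $\A$ is abelian. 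Besides the enough-projectives point in $(a)\Rightarrow(b)$, the part I expect to require the most care is exactly this: verifying that $\domdim\A\geq 2$ really forces the coimage–image comparison maps to be invertible, rather than merely the existence of kernels and cokernels.
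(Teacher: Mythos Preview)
Your cycle (a)$\Rightarrow$(b)$\Rightarrow$(c)$\Rightarrow$(a) is organised differently from the paper's: the paper treats the statement as the $d=1$ case of its main theorem and runs (b)$\Rightarrow$(a)$\Rightarrow$(d)$\Rightarrow$(c)$\Rightarrow$(b), where (d) is the auxiliary condition $\gldim\A\le 2$ together with ${}^\perp\A_\A\subset{}^{\perp_1}\A_\A$ and its opposite version. Your (b)$\Rightarrow$(c) is correct, and your (c)$\Rightarrow$(a) can be made to work---in fact (c) already yields enough injectives in $\A$: from $\domdim\A\ge 1$ the injective envelope of each $P_a$ is representable, say $P_q$, and the easy implication ``$P_q$ injective in $\mod\A$ $\Rightarrow$ $q$ injective in $\A$'' is immediate. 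So once (c) is in hand, $\B=\proj\A$ makes sense and both (a) and (b) follow; this is essentially the paper's (c)$\Rightarrow$(b).

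The genuine gap is exactly where you placed it, in (a)$\Rightarrow$(b): that an abelian dualizing $R$-variety has enough projectives. This cannot be ``teased out by hand'' in a few lines---it is the hard core of the theorem, equivalent in difficulty to (a)$\Rightarrow$(c). The paper does not attempt it directly either. It first proves (a)$\Rightarrow$(d), which \emph{is} easy (for an epimorphism $f$ in $\A$ the kernel gives a length-two projective resolution of $\coker P_f$, and Yoneda shows $\Ext^1(\coker P_f,\A_\A)=0$), and then spends the real effort on (d)$\Rightarrow$(c): an $\Ext^2$-duality between simple $\A$- and $\A^{\op}$-modules of projective dimension $2$, a bound $\pdim I^k\le 1$ on the first two terms of a minimal injective coresolution of each $P_a$, and an Auslander--Bridger argument identifying $\{M:\pdim M\le 1\}$ with $\Omega(\mod\A)$, whence those $I^k$ embed in projectives and are themselves projective. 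Your plan relocates this work rather than bypassing it; to complete your proof you need an argument of comparable depth before (a)$\Rightarrow$(b) can proceed.
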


In fact, \th\ref{main_thm_d1} is a particular case of a more general
result, \th\ref{auslander_correspondence} below, which gives a
characterization of $d$-cluster-tilting subcategories of categories of
finitely presented modules over dualizing $\kk$-varieties. This result
is motivated by a ``higher dimensional'' version of Auslander
correspondence which we briefly recall.

From the viewpoint of higher dimensional Auslander--Reiten theory, the
first author introduced in \cite{iyama_auslander_2007} the class of
$d$-Auslander algebras, which are the Artin $\kk$-algebras $A$ such that
\[
  \gldim A\leq d+1\leq \domdim A,
\]
where $d$ is a positive integer. Thus, if $d=1$, then one recovers the
classical Auslander algebras. Moreover, a one-to-one correspondence
was established in \cite[Thm. 02]{iyama_auslander_2007} between Morita
equivalence classes of $d$-Auslander algebras and equivalence classes
of $d$-cluster-tilting subcategories with additive generators of
categories of finitely presented modules over Artin algebras. One can
think of a $d$-cluster-tilting subcategory as a higher analog of the
module category. The notion of $d$-abelian category was introduced in
\cite{jasso_n-abelian_2014} in order to make precise part of this
analogy. Indeed, $d$-cluster-tilting subcategories are typical
examples of $d$-abelian categories.

The following natural generalization of
\cite[Thm. 02]{iyama_auslander_2007} to the setting of dualizing
$\kk$-varieties is one of the main results of this article. We refer
the reader to Section \ref{sec:preliminaries} for the definitions of
$d$-abelian category, $d$-Auslander dualizing $\kk$-variety, and
$d$-cluster-tilting subcategory.

\begin{theorem}[Auslander correspondence]
  \th\label{auslander_correspondence} Let $\A$ be a dualizing
  $\kk$-variety and $d$ a positive integer. Then, the following
  statements are equivalent.
  \begin{enumerate}
  \item\label{it:d-abelian} The category $\A$ is $d$-abelian.
  \item\label{it:d-ct} There exist a dualizing $\kk$-variety $\B$ and
    a fully faithful functor $\FF\colon\A\to\mod\B$ such that $\FF\A$
    is a $d$-cluster-tilting subcategory of $\mod\B$.
  \item\label{it:d-auslander} The category $\A$ is a $d$-Auslander
    dualizing $\kk$-variety.
  \item\label{it:new-condition} We have $\gldim\A\le d+1$,
    ${}^\perp \A_\A\subset {}^{\perp_d}\A_\A $ and
    $ {}^\perp \A_{A^{\op}}\subset {}^{\perp_d}\A_{\A^{\op}}$.
  \end{enumerate}
\end{theorem}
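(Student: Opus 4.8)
The plan is to establish the cycle of implications $\ref{it:d-abelian}\Rightarrow\ref{it:d-ct}\Rightarrow\ref{it:d-auslander}\Rightarrow\ref{it:new-condition}\Rightarrow\ref{it:d-abelian}$, reducing each step to its classical counterpart over Artin algebras by exploiting the fact that a dualizing $\kk$-variety behaves formally like $\proj A$ for an Artin algebra, so that $\mod\A$ inherits the usual homological machinery (projective resolutions, the transpose, Auslander--Reiten duality). For $\ref{it:d-abelian}\Rightarrow\ref{it:d-ct}$, the natural candidate for $\B$ is the category $\B\mathrel{:=}(\mod\A)^{\op}$, or rather the category of projectives $\proj(\mod\A)$; one must check that $\mod\A$ is again a dualizing $\kk$-variety (this is a known stability property, \cf Auslander--Reiten), and then show that the Yoneda-type embedding $\FF\colon\A\to\mod\B$, $X\mapsto\Hom_{\mod\A}(-,X)|_{\B}$, lands in and exhausts a $d$-cluster-tilting subcategory. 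That $\FF\A$ is $d$-rigid and that every object of $\mod\B$ has the requisite $d$-th syzygy/cosyzygy approximations by $\FF\A$ should follow from the defining axioms of a $d$-abelian category (existence of $d$-kernels and $d$-cokernels) transported across $\FF$, together with $\gldim\B\le d+1$.

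For $\ref{it:d-ct}\Rightarrow\ref{it:d-auslander}$ I would argue that if $\A\cong\FF\A$ is $d$-cluster-tilting in $\mod\B$, then the homological properties defining a $d$-Auslander dualizing $\kk$-variety — namely the bound $\gldim\A\le d+1$ together with the dominant-dimension-type condition $\domdim\A\ge d+1$ expressed via the existence of long exact sequences $0\to\A\to I^{0}\to\cdots\to I^{d}$ with the $I^{j}$ both projective and injective in $\mod\A$ — can be read off from the standard argument in \cite[Thm.~02]{iyama_auslander_2007}: the $d$-cluster-tilting hypothesis supplies, for each $X\in\A$, the $(d+1)$-term coresolution coming from the $d$-cokernel of a minimal left $\A$-approximation inside $\mod\B$, and projectivity--injectivity of the middle terms is exactly the cluster-tilting condition $\Ext^{i}_{\mod\B}(\FF\A,\FF\A)=0$ for $0<i<d$ combined with the analogous vanishing on the other side. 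The implication $\ref{it:d-auslander}\Rightarrow\ref{it:new-condition}$ should be the most formal: the global dimension bound is part of the definition, and one unwinds the dominant-dimension condition to see that the ``small'' orthogonal ${}^{\perp}\A_{\A}$ (vanishing of all positive $\Ext$) is contained in the ``$d$-truncated'' orthogonal ${}^{\perp_{d}}\A_{\A}$ (vanishing of $\Ext^{i}$ for $1\le i\le d$), together with the symmetric statement over $\A^{\op}$ obtained by applying the duality of the dualizing $\kk$-variety.

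The implication $\ref{it:new-condition}\Rightarrow\ref{it:d-abelian}$ is where I expect the real work to lie, and the strategy is to show directly that the conditions $\gldim\A\le d+1$ and the two orthogonality containments force $\A$ to be $d$-cluster-tilting inside $\mod(\proj(\mod\A))$ — or more efficiently, to verify the axioms of a $d$-abelian category (every morphism admits a $d$-kernel and a $d$-cokernel, and the relevant $(d+2)$-term complexes are exact) by constructing the required complexes from minimal projective resolutions in $\mod\A$ and truncating at step $d+1$, using the global dimension bound to guarantee that the truncation is a genuine $d$-kernel, and using the orthogonality condition ${}^{\perp}\A\subset{}^{\perp_{d}}\A$ precisely to guarantee that the syzygies landing in $\A$ are ``acyclic'' in the $d$-abelian sense — i.e.\ that a complex which is exact in the abelian category $\mod\A$ and has all terms and all cycles in $\A$ actually satisfies the $d$-exactness condition. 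The subtlety is that being $d$-abelian is not merely a collection of $\Ext$-vanishing statements but a statement about the shape of the category itself, so one must carefully check that the $d$-kernels and $d$-cokernels produced this way are minimal and unique up to the appropriate notion, which is where the $d$-truncated orthogonality (as opposed to full orthogonality) is exactly what is needed; I would model this verification on the proof of \cite{jasso_n-abelian_2014} that $d$-cluster-tilting subcategories are $d$-abelian, replacing the ambient module category of an Artin algebra by $\mod\B$ for the appropriate dualizing $\kk$-variety $\B$ and checking at each step that finiteness and the duality are preserved.
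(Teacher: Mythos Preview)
Your cycle of implications is \emph{reversed} relative to the paper: the paper proves \eqref{it:d-abelian}$\Rightarrow$\eqref{it:new-condition}$\Rightarrow$\eqref{it:d-auslander}$\Rightarrow$\eqref{it:d-ct}, citing \eqref{it:d-ct}$\Rightarrow$\eqref{it:d-abelian} from \cite{jasso_n-abelian_2014}. More importantly, two of your steps have genuine problems.

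First, you misread the notation in condition \eqref{it:new-condition}: in this paper $\lperp{}\A_\A=\{M:\Hom_\A(M,\A_\A)=0\}$ is the \emph{Hom}-orthogonal, not ``vanishing of all positive $\Ext$''. With the correct reading the inclusion $\lperp{}\A_\A\subset\lperp{d}\A_\A$ is a nontrivial statement (Hom-vanishing forces $\Ext^1,\dots,\Ext^d$-vanishing), and your claim that \eqref{it:d-auslander}$\Rightarrow$\eqref{it:new-condition} is ``most formal'' survives only by accident: one uses that the first $d+1$ terms of a minimal injective coresolution of $P_a$ lie in $\A_\A$, so $\Hom(M,\A_\A)=0$ kills the whole complex computing $\Ext^{\le d}(M,P_a)$.

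Second, your candidate $\B$ in \eqref{it:d-abelian}$\Rightarrow$\eqref{it:d-ct} is wrong. Taking $\B=\proj(\mod\A)$ gives $\B\simeq\A$ and $\mod\B\simeq\mod\A$, so $\FF\A=\A_\A$ is the subcategory of projectives --- which is $d$-cluster-tilting only when $\mod\A$ is semisimple. The correct $\B$ (the one the paper uses for \eqref{it:d-auslander}$\Rightarrow$\eqref{it:d-ct}) is the full subcategory of $\A$ determined by the projective-\emph{injective} objects of $\mod\A$, and to know that there are enough of these one needs $\domdim\A\ge d+1$ --- i.e.\ condition \eqref{it:d-auslander}. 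This is precisely why the paper routes through \eqref{it:d-auslander} before constructing $\B$; there is no evident way to produce $\B$ directly from the $d$-abelian axioms.

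What you have therefore missed entirely is the actual hard step, \eqref{it:new-condition}$\Rightarrow$\eqref{it:d-auslander}: showing that $\gldim\A\le d+1$ together with the two Hom/Ext orthogonality inclusions forces $\domdim\A\ge d+1$. The paper's argument is Gorenstein-theoretic: one shows via the Auslander--Bridger transpose that $\Omega(\mod\A)=\{M:\pdim M\le d\}$, and separately (using a duality $\lperp{}\A_\A\simeq\lperp{}\A_{\A^{\op}}$ induced by $\Ext^{d+1}$, applied to simple modules of projective dimension $d+1$) that the first $d+1$ terms of a minimal injective coresolution of any $P_a$ have projective dimension $\le d$; combining the two, these injectives embed in projectives and hence are projective. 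Your sketch of \eqref{it:new-condition}$\Rightarrow$\eqref{it:d-abelian} could in fact be made to work directly (using \th\ref{epi_perp} and the correct reading of $\lperp{}\A$), but then you still owe either \eqref{it:d-abelian}$\Rightarrow$\eqref{it:d-ct} or \eqref{it:d-abelian}$\Rightarrow$\eqref{it:d-auslander}, and for either one the dominant-dimension argument above is unavoidable.
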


Note that the implication
\eqref{it:d-ct}$\Rightarrow$\eqref{it:d-abelian} in
\th\ref{auslander_correspondence} is shown in
\cite[Thm. 3.16]{jasso_n-abelian_2014}. Also, we mention that a
partial version of \th\ref{auslander_correspondence}, which does not
involve the relationship with $d$-abelian categories (one of the main
points of our result), is shown in
\cite[Thm. 8.4]{beligiannis_relative_2015} in a more general setting
than that of dualizing $\kk$-varieties.

As a consequence of \th\ref{auslander_correspondence}, we obtain a
characterization of $d\ZZ$-cluster-tilting subcategories,
that is those $d$-cluster-tilting subcategories which are closed under
$d$-syzygies and $d$-cosyzygies.  We refer the reader to Section
\ref{sec:preliminaries} for definitions of
$d\ZZ$-cluster-tilting subcategory and of $d$-abelian categories having
$d$-syzygies (resp. $d$-cosyzygies).

\begin{theorem}[Homological Auslander correspondence]
  \th\label{auslander_correspondence_h} Let $\A$ be a dualizing
  $\kk$-variety and $d$ a positive integer.  Then, the following
  statements are equivalent.
  \begin{enumerate}
  \item\label{it:d-abelian_h} The category $\A$ is $d$-abelian and has
    $d$-cosyzygies.
  \item\label{it:d-ct_h} There exist a dualizing $\kk$-variety $\B$
    and a fully faithful functor $\FF\colon\A\to\mod\B$ such that
    $\FF\A$ is a $d\ZZ$-cluster-tilting subcategory of
    $\mod\B$.
  \item\label{it:d-auslander_h} The category $\A$ is a $d$-Auslander
    dualizing $\kk$-variety satisfying 
    \[
      \Omega^{-d}(\A_\A)\subseteq \Omega(\lperp{}\A_\A).
    \]
  \end{enumerate}
  Moreover, the three equivalent statements above are equivalent to
  the following statements:
  \begin{enumerate}
  \item[(a$^\op$)] The category $\A$ is $d$-abelian and has
    $d$-syzygies.
  \item[(b$^\op$)] There exist a dualizing $\kk$-variety $\B$
    and a fully faithful functor $\FF\colon\A^\op\to\mod\B$ such that
    $\FF\A^\op$ is a $d\ZZ$-cluster-tilting subcategory of
    $\mod\B$.
  \item[(c$^\op$)] The category $\A$ is a $d$-Auslander dualizing
    $\kk$-variety satisfying 
    \[
      \Omega^d(D\A_\A)\subseteq \Omega^{-1}((D\A_\A)^\perp).
    \]
  \end{enumerate}
\end{theorem}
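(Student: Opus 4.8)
The plan is to bootstrap \th\ref{auslander_correspondence_h} from \th\ref{auslander_correspondence}. The latter already identifies the underlying conditions (``$\A$ is $d$-abelian'', ``$\FF\A$ is a $d$-cluster-tilting subcategory'', ``$\A$ is a $d$-Auslander dualizing $\kk$-variety''), so what is left is to match the three homological refinements against one another and to pair each statement with its opposite. I would prove the cycle \eqref{it:d-abelian_h}$\Rightarrow$\eqref{it:d-auslander_h}$\Rightarrow$\eqref{it:d-ct_h}$\Rightarrow$\eqref{it:d-abelian_h} and check that \eqref{it:d-ct_h} is preserved by the standard duality $D$; the ``Moreover'' part then follows formally.

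The backbone is the dictionary supplied by \th\ref{auslander_correspondence}. Fix a fully faithful $\FF\colon\A\to\mod\B$ realizing $\FF\A$ as a $d$-cluster-tilting subcategory of $\mod\B$. A sequence in $\A$ is $d$-exact exactly when $\FF$ sends it to an exact sequence of $\mod\B$, and $\FF\A$ contains all projective and all injective objects of $\mod\B$ (both exist since $\B$ is dualizing). Comparing, for $X\in\A$, a $d$-cosyzygy $d$-exact sequence in $\A$ with a minimal injective copresentation of $\FF X$ in $\mod\B$ and shifting dimensions shows: $\A$ has $d$-cosyzygies if and only if $\FF\A$ is closed under the cosyzygy functor $\Omega^{-d}$ of $\mod\B$, and dually with $d$-syzygies and $\Omega^{d}$. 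Thus \eqref{it:d-ct_h} says precisely that $\A$ is $d$-abelian and has both $d$-syzygies and $d$-cosyzygies, which makes \eqref{it:d-ct_h}$\Rightarrow$\eqref{it:d-abelian_h} and \eqref{it:d-ct_h}$\Rightarrow$(a$^\op$) immediate. For the opposite statements: $\B^\op$ is again dualizing, $D$ interchanges $\Omega^{d}$ with $\Omega^{-d}$ and sends $d$-cluster-tilting subcategories of $\mod\B$ to $d$-cluster-tilting subcategories of $\mod\B^\op$, so $D\FF\A\simeq\A^\op$ is again $d\ZZ$-cluster-tilting in $\mod\B^\op$; hence \eqref{it:d-ct_h} is equivalent to (b$^\op$). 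Running $\A^\op$ through the cycle and rewriting condition \eqref{it:d-auslander_h} for $\A^\op$ in terms of $\A$ via $D$ — which turns the $d$-cosyzygy of the projectives into $\Omega^{d}(D\A_\A)$ and the perpendicular subcategory $\lperp{}\A_{\A^\op}$ into $(D\A_\A)^{\perp}$ — produces exactly the opposite statements (a$^\op$), (b$^\op$), (c$^\op$). So the theorem reduces to the cycle.

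What remains is the homological content: the equivalence of \eqref{it:d-auslander_h} with the others. By \th\ref{auslander_correspondence}, each of these conditions already forces $\A$ to be a $d$-Auslander dualizing $\kk$-variety, so $\gldim\A\le d+1\le\domdim\A$; in particular the first $d+1$ terms of the minimal injective coresolution of $\A_\A$ in $\mod\A$ are projective-injective, and $\Omega^{-d}(\A_\A)$ is a controlled subcategory of $\mod\A$. Using the standard adjunction between $\mod\A$ and $\mod\B$ reflecting $\domdim\A\ge d+1$ — under which $\FF\A$ corresponds to $\proj\A$, injectives to injectives, and the $\Ext$-vanishing among objects of $\FF\A$ to the defining vanishing of $\lperp{}\A_\A$ — I would show that closure of $\FF\A$ under $\Omega^{-d}$ in $\mod\B$ is equivalent to the inclusion $\Omega^{-d}(\A_\A)\subseteq\Omega(\lperp{}\A_\A)$ in $\mod\A$; this gives \eqref{it:d-auslander_h}$\iff$``$\A$ is $d$-Auslander and has $d$-cosyzygies'', hence \eqref{it:d-abelian_h}$\iff$\eqref{it:d-auslander_h} via \th\ref{auslander_correspondence}. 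The remaining implication \eqref{it:d-auslander_h}$\Rightarrow$\eqref{it:d-ct_h} is the genuinely delicate one and the main obstacle: it requires upgrading closure of $\FF\A$ under $\Omega^{-d}$ alone to closure under both $\Omega^{-d}$ and $\Omega^{d}$, equivalently, showing that a $d$-Auslander dualizing $\kk$-variety with $d$-cosyzygies automatically has $d$-syzygies. Here $\gldim\A\le d+1$ is used decisively: every $\A$-module has finite projective and injective dimension, so the Nakayama/Serre-duality structure on $\mod\A$ interlaces sufficiently high syzygies with sufficiently high cosyzygies and forces the two one-sided closures to coincide. Carrying this out while keeping precise track of which module category each syzygy, cosyzygy, and perpendicular subcategory lives in is the technical heart of the argument.
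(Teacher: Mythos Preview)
Your overall architecture (the cycle \eqref{it:d-abelian_h}$\Rightarrow$\eqref{it:d-auslander_h}$\Rightarrow$\eqref{it:d-ct_h}$\Rightarrow$\eqref{it:d-abelian_h} plus duality) matches the paper, and your treatment of \eqref{it:d-ct_h}$\Rightarrow$\eqref{it:d-abelian_h} and of the ``Moreover'' part is fine. The problem is the step you single out as ``the genuinely delicate one'', namely upgrading closure of $\FF\A$ under $\Omega^{-d}$ to closure under both $\Omega^{-d}$ and $\Omega^{d}$. This is not delicate at all, and your proposed mechanism for it is wrong. For any $d$-cluster-tilting subcategory $\M\subseteq\mod\B$ one has, for $k\in\{1,\dots,d-1\}$ and $m\ge 0$, the dimension-shift isomorphisms $\Ext_\B^k(\Omega^{dm}\M,\M)\cong\Ext_\B^{dm+k}(\M,\M)\cong\Ext_\B^k(\M,\Omega^{-dm}\M)$; so $\Omega^{-d}(\M)\subseteq\M$ already forces $\Ext_\B^{dm+k}(\M,\M)=0$ for all such $k,m$, hence $\Omega^{d}(\M)\subseteq\M$. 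This is \th\ref{d-ct_h} in the paper, and it uses nothing about $\gldim\A$ or any Nakayama/Serre-type duality. Your argument invokes ``$\gldim\A\le d+1$'' and the structure of $\mod\A$, but the syzygies and cosyzygies in question live in $\mod\B$, where no finiteness of global dimension is available; that line of reasoning would not go through.

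There is also a difference in how the two nontrivial implications are executed. For \eqref{it:d-abelian_h}$\Rightarrow$\eqref{it:d-auslander_h} the paper works entirely inside $\mod\A$: a $d$-cosyzygy sequence $0\to a\to q^0\to\cdots\to q^{d-1}\to\Omega^{-d}a\to 0$ yields, via Yoneda, an exact sequence of projective $\A$-modules whose cokernel lies in $\lperp{}\A_\A$ by \th\ref{epi_perp}, and the $P_{q^i}$ are injective by \th\ref{injectives}; this gives $\Omega^{-d}(\A_\A)\subseteq\Omega(\lperp{}\A_\A)$ directly. For \eqref{it:d-auslander_h}$\Rightarrow$\eqref{it:d-ct_h} the paper applies the exact functor $\FF$ to such a sequence and shows $\FF M=0$ for the cokernel $M\in\lperp{}\A_\A$ by a short computation with the $\A$-duality, using that $P_b^*$ is projective-injective in $\mod(\A^\op)$ for $b\in\B$. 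Your route through the $\FF,\GG$ adjunction to prove \eqref{it:d-abelian_h}$\Leftrightarrow$\eqref{it:d-auslander_h} first and only then attack \eqref{it:d-auslander_h}$\Rightarrow$\eqref{it:d-ct_h} is more roundabout, and once you use \th\ref{d-ct_h} the last step becomes trivial rather than the ``technical heart''.
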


Note that the implication
\eqref{it:d-ct_h}$\Rightarrow$\eqref{it:d-abelian_h} in
\th\ref{auslander_correspondence_h} is shown in
\cite[Thm. 5.16]{jasso_n-abelian_2014}.

Conceptually,
\th\ref{auslander_correspondence,auslander_correspondence_h} enhance
our understanding of the relationship between $d$-abelian categories
and $d$-cluster-tilting subcategories.

Finally, let us describe the structure of the article. In Section
\ref{sec:preliminaries} we recall the definitions and results needed
in the sequel. In Section \ref{sec:auslander_correspondence} we give a
proof of \th\ref{auslander_correspondence}. In Section
\ref{sec:auslander_correspondence_h} we give a proof
\th\ref{auslander_correspondence_h}. Finally, in Section
\ref{sec:examples} we provide examples illustrating our results.

\begin{conventions}
  We fix a commutative artinian ring $\kk$ together with a positive
  integer $d$. We denote by $\mod\kk$ the category of finitely
  presented $\kk$-modules. Let $I$ be the injective envelope of
  $\kk/\rad\kk$ and $D:=\Hom_\kk(-,I)$ the usual duality. All
  categories we consider are assumed to be additive and
  Krull--Schmidt, that is every object decomposes as a finite direct
  sum of objects whose endomorphism rings are local.  Moreover, all
  categories are assumed to be $\kk$-linear and all functors are
  assumed to be $\kk$-linear and additive. Let $\A$ be an
  $\kk$-category and $a,a'\in\A$. We denote the $\kk$-module of
  morphisms $a\to a'$ by $\A(a,a')$. We denote the Yoneda embedding by
  $a\mapsto P_a:=\A(-,a)$. By subcategory we mean full subcategory
  which is closed under isomorphisms.  Under our standing assumptions,
  for an object $a\in\A$ we denote its additive closure by $\add a$.
  Recall that $\add a$ is the smallest subcategory of $\A$ containing
  $a$ and which is closed under finite direct sums and direct
  summands.
\end{conventions}

\section{Preliminaries}
\label{sec:preliminaries}

In this section we recall the notion of dualizing $\kk$-variety
introduced by Auslander and Reiten in \cite{auslander_stable_1974}. We
also recall the definitions of $d$-cluster-tilting subcategory and
$d$-abelian category as well as technical results which are needed in
the proofs of the main theorems.

\subsection{Dualizing $\kk$-varieties}

We begin by recalling the basics on functor categories and finitely
presented modules. We refer the reader to
\cite{auslander_coherent_1966} for a thorough development of these
concepts.

Let $\A$ be an essentially small category. We assume that $\A$ is a
$\Hom$-finite $\kk$-category, that is for all $a,a'\in\A$ the
$\kk$-module $\A(a,a')$ is finitely generated and the composition
\[
  \begin{tikzcd}
	\A(a,a')\otimes\A(a',a'')\rar&\A(a,a'')
  \end{tikzcd}
\]
is $\kk$-bilinear.  A \emph{(right) $\A$-module} is a contravariant
$\kk$-linear additive functor $\A\to\Mod\kk$; a \emph{morphism}
$M\to N$ between $\A$-modules $M$ and $N$ is a natural
transformation. Thus, we obtain an abelian category of $\A$-modules
denoted by $\Mod\A$. If $M,N$ are $\A$-modules, we denote the set
$(\Mod\A)(M,N)$ of natural transformations $M\to N$ by $\Hom_\A(M,N)$,
which is an $\kk$-module in a natural way.

An $\A$-module $M$ is \emph{finitely generated} if there exist an
epimorphism $P_a\to M$ for some $a\in\A$.  Via the Yoneda embedding we
identify $\A$ with the full subcategory $\A_\A$ of $\Mod\A$ of
\emph{finitely generated projective $\A$-modules}. With some abuse of
notation, we write $\A_{\A^\op}:=\A^\op_{\A^\op}$ for the category of
finitely generated projective $\A^\op$-modules, which is a subcategory
of $\Mod(\A^\op)$.  An $\A$-module $M$ is \emph{finitely presented} if
there exist a morphism $f\colon a\to a'$ in $\A$ and an exact sequence
\[
  \begin{tikzcd}[column sep=small]
	P_a\rar{P_f}&P_{a'}\rar&M\rar&0.
  \end{tikzcd}
\]
We denote the full subcategory of $\Mod\A$ of finitely presented
$\A$-modules by $\mod\A$. Note that $\mod\A$ is closed under cokernels
and extensions (by the Horseshoe Lemma) in $\Mod\A$, and that it is
closed under kernels in $\Mod\A$ if and only if $\A$ has weak kernels,
see \cite{auslander_coherent_1966}. Therefore, $\mod\A$ is an exact
abelian subcategory of $\Mod\A$ if and only if $\A$ has weak kernels.

\begin{definition}
  \cite{auslander_stable_1974} An essentially small $\kk$-linear
  additive Krull--Schmidt category $\A$ is a \emph{dualizing
    $\kk$-variety} if the contravariant functor
  $\Mod\A\to\Mod(\A^\op)$ given by $M\mapsto D\circ M$ induces a
  duality $D\colon\mod\A\to\mod(\A^\op)$.
\end{definition}

Let $\A$ be a dualizing $\kk$-variety. Since $\mod\A$ and
$\mod(\A^\op)$ have cokernels, it follows from the existence of a
duality $\mod\A\to\mod(\A^\op)$ that $\mod\A$ and $\mod(\A^\op)$ are
closed under kernels in $\Mod\A$ and $\Mod(\A^\op)$
respectively. Therefore $\mod\A$ and $\mod(\A^\op)$ are abelian
categories with enough projectives and injectives, see
\cite[Thm. 2.4]{auslander_stable_1974}.

The most basic examples of dualizing $\kk$-varieties arise from Artin
algebras. Recall that if $A$ is an Artin algebra, then there is an
equivalence $\mod A\cong\mod(\proj A)$.

\begin{proposition}
  \cite[Prop. 2.5]{auslander_stable_1974} Let $A$ be an Artin
  algebra. Then, the category $\proj A$ of finitely generated
  projective $A$-modules is a dualizing $\kk$-variety.
\end{proposition}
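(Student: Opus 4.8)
The plan is to reduce the assertion to the classical fact that $D=\Hom_\kk(-,I)$ restricts to a duality $\mod A\to\mod(A^\op)$ between the categories of finitely generated (equivalently, finitely presented) modules over the Artin $\kk$-algebras $A$ and $A^\op$.

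First I would record the elementary structural properties. The category $\proj A$ is essentially small because finitely generated projective right $A$-modules form a set up to isomorphism; it is additive and $\kk$-linear because $\Mod A$ is; it is $\Hom$-finite over $\kk$ because every $P\in\proj A$ is a direct summand of some $A^n$ and $A$ is a finitely generated $\kk$-module, so $P$ is a finitely generated $\kk$-module, whence $\Hom_A(P,Q)$ is a $\kk$-submodule of the finitely generated $\kk$-module $\Hom_\kk(P,Q)$ and therefore finitely generated over the noetherian ring $\kk$, with $\kk$-bilinear composition; and $\proj A$ is Krull--Schmidt because an Artin algebra is semiperfect, so its finitely generated projective modules decompose as finite direct sums of indecomposables with local endomorphism rings.

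Next I would set up two comparison equivalences. Evaluation at the regular module $A$ is an equivalence $\Mod(\proj A)\to\Mod A$, with quasi-inverse the restricted Yoneda functor $M\mapsto\Hom_A(-,M)|_{\proj A}$ (the two composites agree with the identity on the object $A$ and both sides are additive), and it restricts to the equivalence $\mod(\proj A)\cong\mod A$ recalled above. Moreover, $\Hom_A(-,A)$ restricts to a duality between finitely generated projective right $A$-modules and finitely generated projective left $A$-modules, that is, an equivalence $(\proj A)^\op\cong\proj(A^\op)$, because the canonical map from $P$ to its double $A$-dual is an isomorphism for finitely generated projective $P$; composing with the previous equivalence applied to the Artin $\kk$-algebra $A^\op$ gives $\mod((\proj A)^\op)\cong\mod(\proj(A^\op))\cong\mod(A^\op)$. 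Note that under $(\proj A)^\op\cong\proj(A^\op)$ the regular module $A^\op$ corresponds to the object $A\in\proj A$.

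Finally I would check that, under these identifications, the functor $M\mapsto D\circ M$ from $\Mod(\proj A)$ to $\Mod((\proj A)^\op)$ corresponds to the classical $D\colon\Mod A\to\Mod(A^\op)$: for $M\in\Mod(\proj A)$ with associated right $A$-module $M(A)$, the $(\proj A)^\op$-module $D\circ M$, transported to a $\proj(A^\op)$-module and evaluated at $A^\op$ --- that is, at $A\in\proj A$ --- is $D(M(A))$, with its usual left $A$-module structure, and this is natural in $M$. Since $D^2\cong\id$ on finitely generated $\kk$-modules ($I$ being an injective cogenerator of $\mod\kk$ as $\kk$ is artinian), the classical $D$ restricts to a duality $\mod A\to\mod(A^\op)$; transporting this across the comparison square shows that $M\mapsto D\circ M$ carries $\mod(\proj A)$ into $\mod((\proj A)^\op)$ and induces a duality there, which is precisely the assertion that $\proj A$ is a dualizing $\kk$-variety. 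I expect the only genuine bookkeeping to be in this last step --- tracking the module structures through the Yoneda-type equivalences --- while everything else is formal.
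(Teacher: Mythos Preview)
Your proposal is correct and follows the natural route: identify $\mod(\proj A)$ with $\mod A$ and $\mod((\proj A)^\op)$ with $\mod(A^\op)$ via evaluation and the $A$-duality on projectives, then transport the classical duality $D\colon\mod A\to\mod(A^\op)$ across these equivalences. The structural checks (essentially small, $\Hom$-finite, Krull--Schmidt via semiperfectness) are all correct, and the only delicate point---tracking the module structures through the Yoneda-type equivalences in the last step---you have flagged appropriately.

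The paper, however, does not give a proof of this proposition at all: it is simply quoted from Auslander--Reiten's original paper \cite[Prop.~2.5]{auslander_stable_1974} as background, with no argument supplied. So there is no ``paper's own proof'' to compare against. Your argument is essentially what one finds in the original reference, and is the standard way to establish the result.
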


The importance of the concept of dualizing $\kk$-variety comes from
the following result which was instrumental in the first proof of the
existence theorem of almost-split sequences. It allows us to
investigate dualizing $\kk$-varieties and their categories of finitely
presented modules using the same representation-theoretic methods.

\begin{proposition}
  \cite[Prop. 2.6]{auslander_stable_1974} Let $\A$ be a dualizing
  $\kk$-variety.  Then, $\mod\A$ is a dualizing $\kk$-variety.
\end{proposition}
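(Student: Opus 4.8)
The plan is to verify directly that $\C:=\mod\A$ satisfies the definition of a dualizing $\kk$-variety. The preliminary properties are routine: $\C$ is essentially small because a finitely presented $\A$-module is, up to isomorphism, the cokernel of $P_f$ for a morphism $f$ of the essentially small category $\A$; it is $\Hom$-finite over $\kk$ because a projective presentation $P_a\to P_{a'}\to M\to 0$ gives a monomorphism $\Hom_\A(M,N)\hookrightarrow N(a')$ with $N(a')$ finitely generated over the noetherian ring $\kk$; and it is Krull--Schmidt, being abelian with endomorphism rings that are module-finite over the artinian ring $\kk$, hence semiperfect. Every $F\in\mod\C$ is moreover $\mod\kk$-valued, since a presentation $\C(-,C_1)\to\C(-,C_0)\to F\to 0$ exhibits $F(X)$ as a quotient of the finitely generated $\kk$-module $\C(X,C_0)$; hence $D$ is exact with $D\circ D\cong\id$ on $\mod\C$ and on $\mod(\C^\op)$. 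By this last point it suffices to prove that $D$ maps $\mod\C$ into $\mod(\C^\op)$: applying that statement to $\A^\op$ and using the duality $D\colon\mod\A\to\mod(\A^\op)$ to identify $(\mod\A)^\op$ with $\mod(\A^\op)$ (compatibly with $D$) then shows $D$ also maps $\mod(\C^\op)$ into $\mod\C$, and the two restrictions are automatically mutually quasi-inverse dualities.

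So fix $F\in\mod\C$; the aim is that $DF\in\mod(\C^\op)$. For $X\in\C$, the module $D\Hom_\C(X,-)$ is an injective right $\C$-module --- the $D$-dual of the representable $\C^\op$-module $\Hom_\C(X,-)$, which is in turn the $D$-dual of $D\Hom_\C(X,-)$ --- and there is a natural isomorphism $\Hom_{\Mod\C}(F,D\Hom_\C(X,-))\cong D(F(X))$ coming from the tensor--Hom adjunction and the co-Yoneda isomorphism $F\otimes_\C\Hom_\C(X,-)\cong F(X)$. It therefore suffices to build an exact sequence $0\to F\to\mathbb{I}^0\to\mathbb{I}^1$ of right $\C$-modules with $\mathbb{I}^0,\mathbb{I}^1$ finite direct sums of modules $D\Hom_\C(X,-)$, for then applying the exact functor $D$ yields a two-term projective presentation $D\mathbb{I}^1\to D\mathbb{I}^0\to DF\to 0$ over $\C^\op$. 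One checks, furthermore, that each $D\Hom_\C(X,-)$ already lies in $\mod\C$: a projective presentation $P_{a_1}\to P_{a_0}\to X\to 0$ in $\mod\A$ gives, by Yoneda, an exact sequence $0\to\Hom_\C(X,-)\to\operatorname{ev}_{a_0}\to\operatorname{ev}_{a_1}$ of left $\C$-modules (with $\operatorname{ev}_a$ the evaluation at $a$), and dualizing gives $D(\operatorname{ev}_{a_1})\to D(\operatorname{ev}_{a_0})\to D\Hom_\C(X,-)\to 0$; since $D(\operatorname{ev}_a)\cong\Hom_\C(-,D\A(a,-))$ --- both sides send a presented module $\coker(P_{b_1}\to P_{b_0})$ to $\ker(D\A(a,b_0)\to D\A(a,b_1))$, using that the finitely generated injective $\A$-module $D\A(a,-)$ belongs to $\mod\A$ --- the module $D\Hom_\C(X,-)$ is a cokernel of a morphism of representable $\C$-modules, hence finitely presented. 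Consequently $\mathbb{I}^0\in\mod\C$, so the cokernel of a monomorphism $F\hookrightarrow\mathbb{I}^0$ is again finitely presented, and the two-term sequence reduces to the single assertion that \emph{every $F\in\mod\C$ embeds into a finite direct sum of modules $D\Hom_\C(X,-)$} --- apply it first to $F$, then to $\mathbb{I}^0/F$.

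That last assertion --- equivalently, that $\operatorname{soc}F$ is finitely generated for every $F\in\mod\C$ --- is the main obstacle, and it is exactly here that the dualizing hypothesis on $\A$ is indispensable: it guarantees that $\mod\A$ is abelian with \emph{enough injectives}, namely the finitely generated injective modules $D\A(a,-)$. From a presentation $\C(-,C_1)\xrightarrow{\C(-,g)}\C(-,C_0)\to F\to 0$ with $g\colon C_1\to C_0$ in $\mod\A$, a simple submodule $\C(-,T)/\rad\hookrightarrow F$ with $T$ indecomposable corresponds to a morphism $T\to C_0$ that does not factor through $g$ but whose composite with every radical morphism into $T$ does; I would bound the indecomposables $T$ and the multiplicities occurring here by embedding $C_0$ and $C_1$ into finitely generated injective $\A$-modules and using the $\Hom$-finiteness and Krull--Schmidt property of $\mod\A$, concluding that $\operatorname{soc}F$ is finitely generated, so that $F$ embeds into the injective envelope of a finite-length $\C$-module --- which is a finite direct sum of modules $D\Hom_\C(X,-)$. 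This finiteness is the one genuinely non-formal step; it can fail for an arbitrary additive category and is supplied precisely by the assumption that $\A$ is a dualizing $\kk$-variety. Together with the reductions above, this shows that $\mod\A$ is a dualizing $\kk$-variety.
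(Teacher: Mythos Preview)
The paper gives no proof of this proposition; it is a citation to \cite{auslander_stable_1974}. Your reductions are correct and your identification $D(\operatorname{ev}_a)\cong\C(-,D\A(a,-))$ is exactly the key observation, but the decisive step---that $\operatorname{soc}F$ is finitely generated for every $F\in\mod\C$---is only asserted, not proved. The proposed method (``embed $C_0$ and $C_1$ into finitely generated injective $\A$-modules and use $\Hom$-finiteness and Krull--Schmidt'') does not work as stated: $F$ is a \emph{quotient} of $\C(-,C_0)$, and a quotient of a module with finitely generated socle need not have finitely generated socle, so bounding $\operatorname{soc}\,\C(-,C_0)$ via an injective embedding of $C_0$ gives no direct control over $\operatorname{soc}F$. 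Bounding the indecomposable $T$ with $S_T\hookrightarrow F$ is in fact tantamount to Auslander--Reiten theory for $\mod\A$, which is not lighter than the statement you are trying to prove.

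The gap closes without any socle argument, using pieces you already have. Your identity $D(\operatorname{ev}_a)\cong\C(-,D\A(a,-))$ says that $\C(-,J)\cong D\C(P,-)$ whenever $J=D\A(a,-)$ is injective and $P=P_a$ is projective in $\C$. Now run the dual of your own proof that $D\C(X,-)\in\mod\C$: an \emph{injective} copresentation $0\to C_0\to J^0\to J^1$ in $\C=\mod\A$ (available precisely because $\A$ is dualizing) gives $0\to\C(-,C_0)\to D\C(P^0,-)\to D\C(P^1,-)$, and applying $D$ yields a presentation $\C(P^1,-)\to\C(P^0,-)\to D\C(-,C_0)\to0$ in $\mod(\C^{\op})$. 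From $\C(-,C_1)\to\C(-,C_0)\to F\to0$ one then obtains $0\to DF\to D\C(-,C_0)\to D\C(-,C_1)$ with both right-hand terms in $\mod(\C^{\op})$; since $\C$ is abelian, $\C^{\op}$ has weak kernels, so $\mod(\C^{\op})$ is closed under kernels and $DF\in\mod(\C^{\op})$ directly.
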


Let $\A$ be a dualizing $\kk$-variety. Recall that a subcategory $\B$
of $\A$ is \emph{contravariantly finite} if for all $a\in\A$ there
exist $b\in\B$ and a morphism $f\colon b\to a$ such that the sequence
\[
  \begin{tikzcd}[column sep=small]
	\A(-,b)|_\B\rar{P_f}&\A(-,a)|_\B\rar&0
  \end{tikzcd}
\]
is exact. Such a morphism $f$ is called a \emph{right
  $\B$-approximation of $a$}. \emph{Covariantly finite} subcategories
of $\A$ are defined dually. The subcategory $\B$ is \emph{functorially
  finite} if it is both contravariantly finite and covariantly
finite. The following well known result is a basic tool for
constructing dualizing $\kk$-varieties, see
\cite[Thm. 2.3]{auslander_almost_1981} and
\cite[Prop. 1.2]{iyama_auslander_2007} for a general statement.

\begin{proposition}
  \th\label{dkv_ff} Let $\A$ be a dualizing $\kk$-variety and $\B$ a
  functorially finite subcategory of $\A$. Then, $\B$ is a dualizing
  $\kk$-variety.
\end{proposition}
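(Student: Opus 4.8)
The plan is to verify directly that $\B$ satisfies the definition of a dualizing $\kk$-variety, namely that the duality $D=\Hom_\kk(-,I)$ induces a duality $\mod\B\to\mod(\B^\op)$. The strategy is to compare $\mod\B$ with $\mod\A$ via the restriction functor along the inclusion $\iota\colon\B\hookrightarrow\A$. First I would record the basic observations: $\B$ is automatically $\Hom$-finite and Krull--Schmidt as a full subcategory of $\A$, and since $\B$ is functorially finite in $\A$ it has weak kernels and weak cokernels (a weak kernel of $f\colon b\to b'$ in $\B$ is obtained by taking a weak kernel $g\colon a\to b$ in $\A$—which exists because $\mod\A$ is abelian, so $\A$ has weak kernels—and then a right $\B$-approximation of $a$; dually for weak cokernels). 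Hence $\mod\B$ and $\mod(\B^\op)$ are abelian categories, and the real content is the duality.

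The key step is to construct, for each functorially finite $\B\subseteq\A$, an exact restriction functor and to show it behaves well with respect to finite presentation and duality. Concretely, contravariant finiteness of $\B$ gives that for every $M\in\mod\A$ the restriction $M|_\B$ lies in $\mod\B$: present $M$ by $P_a\to P_{a'}\to M\to 0$ in $\mod\A$, take right $\B$-approximations of $a$ and $a'$, and assemble a projective presentation of $M|_\B$ over $\B$. Conversely every $N\in\mod\B$ extends to some $\widetilde N\in\mod\A$ with $\widetilde N|_\B\cong N$, using the same approximation data in reverse; more precisely, a projective presentation $\B(-,b)\to\B(-,b')\to N\to0$ in $\mod\B$ is the restriction of the presentation $P_b\to P_{b'}\to\coker(P_b\to P_{b'})\to0$ in $\mod\A$. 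Thus restriction $\mod\A\to\mod\B$ is essentially surjective. The crucial compatibility is that restriction commutes with $D$ on the nose: for $M\in\Mod\A$ one has $(DM)|_{\B^\op}=D(M|_\B)$ since $D$ is applied objectwise. Combined with the fact that $D\colon\mod\A\to\mod(\A^\op)$ is already a duality, this shows that $D$ sends $\mod\B=\{M|_\B : M\in\mod\A\}$ into $\mod(\B^\op)=\{L|_{\B^\op} : L\in\mod(\A^\op)\}$ and is a quasi-inverse of the same construction with $\A^\op$, $\B^\op$ in place of $\A$, $\B$; that $\B^\op$ is functorially finite in $\A^\op$ follows by applying the duality $D\colon\mod\A\to\mod(\A^\op)$ to approximation sequences, so the argument is symmetric.

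I expect the main obstacle to be the bookkeeping showing that restriction along $\iota$ is well-defined and full-faithful-like enough that the composite $\mod\A\xrightarrow{|_\B}\mod\B\xrightarrow{D}\mod(\B^\op)$ and its counterpart compose to the identity up to natural isomorphism; one has to be careful that although restriction $\mod\A\to\mod\B$ is dense it is not faithful, so the duality on $\mod\B$ cannot simply be transported from $\mod\A$ but must be checked to be well-defined on isomorphism classes and to be a duality on the quotient-like category $\mod\B$. The cleanest way around this is to use the explicit description of $\mod\B$ via projective presentations over $\B$ together with the already-established duality $D\colon\mod\A\to\mod(\A^\op)$ restricted appropriately, exactly as in \cite[Thm. 2.3]{auslander_almost_1981}; since the statement is classical I would present the above as a sketch and refer to \loccit for the remaining routine verifications.
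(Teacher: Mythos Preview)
The paper does not actually prove this proposition; it records it as a well-known fact and refers the reader to \cite[Thm.~2.3]{auslander_almost_1981} and \cite[Prop.~1.2]{iyama_auslander_2007}. Your sketch follows the standard restriction-and-approximation argument from those references (and you even cite the former), so the proposal is correct and, if anything, more detailed than what the paper itself provides.
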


\begin{lemma}
  \th\label{lemma_DA} Let $\A$ be a dualizing $\kk$-variety. Then,
  $\A_\A$ and $(D\A)_\A$ are functorially finite in $\mod\A$.
\end{lemma}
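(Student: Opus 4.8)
The plan is to verify the two approximation conditions separately, using the abelian structure of $\mod\A$ together with the fact that $\mod\A$ has enough projectives and injectives. Recall that the projective objects of $\mod\A$ are precisely the summands of finite direct sums of representable functors $P_a = \A(-,a)$, so $\A_\A$ is exactly the subcategory of projective objects of $\mod\A$; dually, since $D\colon\mod\A\to\mod(\A^\op)$ is a duality, $(D\A)_\A$ is exactly the subcategory of injective objects of $\mod\A$.

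First I would show $\A_\A$ is contravariantly finite in $\mod\A$. Given $M\in\mod\A$, by definition of finite presentation there is an epimorphism $P_{a'}\to M$ with $a'\in\A$; this is a right $\A_\A$-approximation essentially by the Yoneda lemma, since any morphism $P_b\to M$ with $b\in\A$ corresponds to an element of $M(b)$, which lifts along the surjection $M(a')\twoheadrightarrow M(b)$ coming from... more directly: an epimorphism of functors $P_{a'}\to M$ induces an epimorphism $\Hom_\A(P_b,P_{a'})\to\Hom_\A(P_b,M)$ because $P_b$ is projective, which is exactly the statement that $P_{a'}\to M$ is a right $\A_\A$-approximation. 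For covariant finiteness of $\A_\A$: given $M\in\mod\A$, apply the duality $D$ to get $DM\in\mod(\A^\op)$, take a projective presentation $P_{b}\to DM$ in $\mod(\A^\op)$ with $b\in\A^\op$, and apply $D$ again; this produces a monomorphism $M\to DP_b$ with $DP_b\in(D\A)_\A$, but one must descend from $(D\A)_\A$ to $\A_\A$. Here I would instead argue directly: covariant finiteness of $\A_\A$ in $\mod\A$ is equivalent, via $D$, to contravariant finiteness of $(D\A)_{\A^\op}\subset\mod(\A^\op)$, and dualize the paragraph below.

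For $(D\A)_\A$: by the duality, contravariant finiteness of $(D\A)_\A$ in $\mod\A$ is equivalent to covariant finiteness of $\A_{\A^\op}$ in $\mod(\A^\op)$, and covariant finiteness of $(D\A)_\A$ in $\mod\A$ is equivalent to contravariant finiteness of $\A_{\A^\op}$ in $\mod(\A^\op)$. The latter we already established in the first paragraph (applied to $\A^\op$, which is again a dualizing $\kk$-variety); the former follows by a symmetric argument, or by noting that an injective copresentation $0\to M\to DP_{a}\to DP_{a'}$ of $M\in\mod\A$ (obtained by dualizing a projective presentation of $DM$) exhibits $M\to DP_a$ as a left $(D\A)_\A$-approximation, since $DP_a$ is injective in $\mod\A$ and hence $\Hom_\A(DP_a, DP_c)\to\Hom_\A(M,DP_c)$ is surjective for all $c$. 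Assembling these four statements gives that both $\A_\A$ and $(D\A)_\A$ are functorially finite in $\mod\A$.

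The only genuinely delicate point is the bookkeeping between the four approximation conditions and their duals; each individual condition reduces to the elementary observation that for a projective (resp. injective) object $P$ and an epimorphism (resp. monomorphism) with source (resp. target) $P$, lifting (resp. extending) morphisms is automatic, together with the existence of projective presentations and injective copresentations in $\mod\A$ and $\mod(\A^\op)$ guaranteed by $\A$ being a dualizing $\kk$-variety. The main obstacle, such as it is, is simply to track the variance correctly when passing through $D$.
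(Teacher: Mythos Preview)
Your argument covers the two easy directions---$\A_\A$ is contravariantly finite (projective covers give right approximations) and $(D\A)_\A$ is covariantly finite (injective envelopes give left approximations)---but the two remaining directions are genuinely harder, and your treatment of them is circular. Concretely: you reduce covariant finiteness of $\A_\A$ in $\mod\A$, via $D$, to contravariant finiteness of $(D\A)_{\A^\op}$ in $\mod(\A^\op)$, and then say ``dualize the paragraph below''. But in that paragraph the only thing you actually prove about $(D\A)_\A$ is \emph{covariant} finiteness (an injective copresentation gives a \emph{left} approximation), which is the easy direction again. Your sentence ``the former follows by a symmetric argument, or by noting that an injective copresentation \dots\ exhibits $M\to DP_a$ as a left $(D\A)_\A$-approximation'' is establishing ``the latter'' a second time, not ``the former''. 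Tracing through, covariant finiteness of $\A_\A$ for $\A$ and for $\A^\op$ are equivalent to each other under your reductions, so nothing new is proved.

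The missing idea is the one the paper uses: work with the $\A$-dual $(-)^*=\Hom_\A(-,-)|_\A$, not the $\kk$-dual $D$. Given $M\in\mod\A$, the module $M^*$ lies in $\mod(\A^\op)$, so one can choose a surjection $P_a^*\to M^*$; unwinding Yoneda, this surjection is exactly the statement that the corresponding morphism $M\to P_a$ (via $M\to M^{**}\to P_a$) is a left $\A_\A$-approximation. This is a direct construction and does not loop back through the dual problem. The point is that $D$ swaps projectives with injectives, so it cannot by itself convert the hard direction for one into the easy direction for the other; the functor $(-)^*$ instead sends projectives to projectives and is what produces a map \emph{into} a projective from a map \emph{out of} one.
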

\begin{proof}
  By duality, it suffices to show that $\A_\A$ is functorially finite
  in $\mod\A$.  Contravariantly finiteness is clear. Fix $M$ in
  $\mod\A$. Since $M^*$ is in $\mod(\A^{op})$, we can take a
  surjection $P_a^*\to M^*$. It is easy to check that the composition
  $M\to M^{**}\to P_a$ is a left $\A_\A$-approximation of $M$.
\end{proof}

We recall the following fundamental property of dualizing
$\kk$-varieties.

\begin{proposition}
  \th\label{minimal_proj_res} \cite[Prop. 3.4]{auslander_stable_1974}
  Let $\A$ be a dualizing $\kk$-variety. Then every finitely presented
  $\A$-module has a minimal projective (resp. injective) presentation
  (resp. copresentation). In particular, $\mod\A$ has projective
  covers and injective envelopes.
\end{proposition}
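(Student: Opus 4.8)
The plan is to reduce everything to the existence of projective covers in $\mod\A$: once these are available, minimal projective presentations (and resolutions) follow by iterating along syzygies, and the injective statements follow by applying the duality $D$.

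First I would record the crucial finiteness property: for all $M,N\in\mod\A$ the $\kk$-module $\Hom_\A(M,N)$ is finitely generated. Indeed, choosing a presentation $P_a\to P_{a'}\to M\to 0$ and applying $\Hom_\A(-,N)$ embeds $\Hom_\A(M,N)$ into $N(a')$, which is a finitely generated $\kk$-module because $N$ is finitely presented and $\A$ is $\Hom$-finite; since $\kk$ is artinian, hence noetherian, $\Hom_\A(M,N)$ is finitely generated. In particular $\End_\A(M)$ is an Artin $\kk$-algebra, hence a semiperfect ring; this is the form in which the standing hypotheses will be used.

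The heart of the argument is to show that every $M\in\mod\A$ admits a projective cover. Since $M$ is finitely presented, fix an epimorphism $q\colon P\to M$ with $P=P_a$, and set $\Gamma:=\End_\A(P)$, which is semiperfect by the previous step. I would then pass to $\Gamma$-modules along $h_P:=\Hom_\A(P,-)$, which is exact because $P$ is projective and which restricts to an equivalence $\add P\simeq\proj\Gamma$; since $h_P(q)\colon\Gamma\to h_P(M)$ is surjective, $h_P(M)$ is a cyclic $\Gamma$-module, and as $\Gamma$ is semiperfect it admits a projective cover $\pi\colon Q\to h_P(M)$ with $Q\in\proj\Gamma$. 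Writing $Q=h_P(P')$ with $P'\in\add P$ and lifting $\pi$ to a morphism $q'\colon P'\to M$ with $h_P(q')=\pi$ (possible because $h_P$ induces an isomorphism $\Hom_\Gamma(h_P(P'),h_P(M))\cong\Hom_\A(P',M)$), I would check that $q'$ is a projective cover of $M$. Here the only inputs are the exactness of $h_P$ and the observation that every quotient of $M$ and of $P'$ is generated by $P$, so that $\Hom_\A(P,-)$ detects when such a quotient vanishes; these turn the assertions ``$q'$ is an epimorphism'' and ``$\ker q'$ is superfluous in $P'$'' into the corresponding assertions about $\pi$, which hold by construction.

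The remainder is routine. Given $M\in\mod\A$, a projective cover $P_0\to M$ together with a projective cover $P_1\to\ker(P_0\to M)$ -- legitimate because $\mod\A$ is abelian, so $\ker(P_0\to M)$ again lies in $\mod\A$ -- yields a minimal projective presentation $P_1\to P_0\to M\to 0$, and iterating gives a minimal projective resolution. For the injective statements, the notion of dualizing $\kk$-variety is self-dual, so $\A^\op$ is again a dualizing $\kk$-variety; hence $\mod(\A^\op)$ has minimal projective presentations, and applying the duality $D\colon\mod(\A^\op)\to\mod\A$ -- which interchanges projectives with injectives and projective covers with injective envelopes -- to a minimal projective presentation of $DM$ produces a minimal injective copresentation of $M$. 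The ``in particular'' clause is then immediate. I expect the one genuine obstacle to be the middle step, specifically verifying cleanly that $h_P$ reflects the projective cover property back from $\mod\Gamma$ to $\mod\A$; everything else is either standard semiperfect-ring theory or formal manipulation with $D$.
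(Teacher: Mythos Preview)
The paper does not give its own proof of this proposition; it simply cites \cite[Prop.~3.4]{auslander_stable_1974}. Your argument is correct and is essentially the standard one: reduce to the existence of projective covers by showing that $\End_\A(P_a)$ is an Artin $\kk$-algebra (hence semiperfect), transport the projective cover of $\Hom_\A(P,M)$ in $\mod\Gamma$ back to $\mod\A$ via the equivalence $\add P\simeq\proj\Gamma$, and then use the duality $D$ for the injective statements. The one place you flag as delicate---that $h_P=\Hom_\A(P,-)$ reflects the projective-cover property---is cleanly handled by the ``right minimal'' formulation: if $q'\varphi=q'$ for $\varphi\in\End_\A(P')$, then $h_P(\varphi)$ is an automorphism because $\pi$ is right minimal, and since $h_P$ is fully faithful on $\add P$ this forces $\varphi$ to be an automorphism. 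This avoids any worry about whether arbitrary submodules of $P'$ lie in $\mod\A$.
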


Let $\A$ be a dualizing $\kk$-variety. We recall the construction of
the Auslander--Bridger transpose of an $\A$-module. Firstly, Yoneda's
lemma implies that the contravariant left exact functor
\[
  \begin{tikzcd}
    (-)^*\colon\Mod\A\rar&\Mod(\A^\op)
  \end{tikzcd}
\]
defined by
\[
  M^*:=\Hom_\A(M,-)|_{\A}
\]
induces a duality $(-)^*\colon\A_\A\to\A_{\A^\op}$ which satisfies
$P_a^*\cong\A(a,-)$. We call this duality the
\emph{$\A$-duality}. Secondly, let $M\in\mod\A$ and choose a
projective presentation
\[
  \begin{tikzcd}[column sep=small]
	P_{a_1}\rar{P_f}&P_{a_0}\rar&M\rar&0
  \end{tikzcd}
\]
and set $\Tr M:=\coker(P_f^*)$. Finally, in order to extend $\Tr$ to a
functor, denote by $\smod\A$ the quotient of the category $\mod\A$ by
the ideal of morphisms which factor through a finitely generated
projective $\A$-module.  Using the lifting property of projective
$\A$-modules it is easy to see that this association induces a well
defined functor $\Tr\colon\smod\A\to\smod(\A^\op)$ which is called the
\emph{Auslander--Bridger transposition}.

The following result is well known in the case of Artin algebras, see
\cite[Prop. 6.3]{auslander_coherent_1966}. We need it in the more
general setting of dualizing $\kk$-varieties.  Recall that Heller's
syzygy functor $\Omega\colon\smod\A\to\smod\A$ is defined by a short
exact sequence
\[
  \begin{tikzcd}[column sep=small]
	0\rar&\Omega M\rar&P_{a_0}\rar&M\rar&0.
  \end{tikzcd}
\]
The cosyzygy functor
$\Omega^{-1}\colon\overline{\mod}\,\A\to\overline{\mod}\,\A$ is
defined dually.

\begin{proposition}[Auslander--Bridger sequence]
  \th\label{auslander-bridger_sequence} Let $\A$ be a dualizing
  $\kk$-variety. For each $M\in\mod\A$ there exists an exact sequence
  \[
    \begin{tikzcd}[column sep=small]
      0\rar&\Ext_{\A^\op}^1(\Tr
      M,-)|_{\A^\op}\rar&M\rar&M^{**}\rar&\Ext_{\A^\op}^2(\Tr
      M,-)|_{\A^\op}\rar&0
    \end{tikzcd}
  \]
  Moreover, $M^{**}\in\Omega^2(\mod\A)$.
\end{proposition}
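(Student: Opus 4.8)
The plan is to construct the exact sequence directly from a projective presentation of $M$ by applying the $\A$-duality twice and comparing the result with the original presentation. First I would fix a minimal projective presentation
\[
  \begin{tikzcd}[column sep=small]
    P_{a_1}\rar{P_f}&P_{a_0}\rar{\pi}&M\rar&0
  \end{tikzcd}
\]
of $M$ in $\mod\A$, which exists by \th\ref{minimal_proj_res}. Applying the left exact contravariant functor $(-)^*$ yields a complex
\[
  \begin{tikzcd}[column sep=small]
    0\rar&M^*\rar&P_{a_0}^*\rar{P_f^*}&P_{a_1}^*\rar&\Tr M\rar&0
  \end{tikzcd}
\]
in $\mod(\A^\op)$; here the definition of $\Tr M=\coker(P_f^*)$ gives exactness at $P_{a_1}^*$ and at $\Tr M$, and exactness at $M^*$ and at $P_{a_0}^*$ is the assertion that $(-)^*$ is left exact, which follows from Yoneda's lemma as recalled above. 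Thus $P_{a_0}^*\to P_{a_1}^*$ is a projective presentation of $\Tr M$ in $\mod(\A^\op)$, extended on the left by the kernel term $M^*$.

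The key step is to extract a projective \emph{resolution} of $\Tr M$ and to identify the relevant cohomology. From the four-term exact sequence above, split off the image $N:=\operatorname{im}(P_f^*)=\ker(P_{a_1}^*\to\Tr M)$, so that we have short exact sequences $0\to M^*\to P_{a_0}^*\to N\to 0$ and $0\to N\to P_{a_1}^*\to\Tr M\to 0$. Choosing a projective resolution of $M^*$ and splicing, one obtains a projective resolution of $\Tr M$ whose truncation in low degrees is built from $P_{a_1}^*$, $P_{a_0}^*$, and the resolution of $M^*$. Now apply the $\A^\op$-duality $(-)^*\colon\A_{\A^\op}\to\A_\A$ — equivalently, apply $\Hom_{\A^\op}(-,\A^\op)$ — to this resolution of $\Tr M$. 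Since $(-)^*$ sends $P_{a_i}^*$ back to $P_{a_i}$, the first two terms of the resulting complex reproduce the original presentation $P_{a_1}\to P_{a_0}$ of $M$, while the next term computes, via the higher terms of the resolution, the modules $\Ext^1_{\A^\op}(\Tr M,-)|_{\A^\op}$ and $\Ext^2_{\A^\op}(\Tr M,-)|_{\A^\op}$. Tracking the homology of the composite complex and using the biduality natural transformation $M\to M^{**}$, one reads off exactly the four-term exact sequence in the statement, with $\Ext^1$ appearing as the kernel of $M\to M^{**}$ and $\Ext^2$ as its cokernel. The main obstacle is the bookkeeping in this double-dualization argument: one must verify that the evaluation map $M\to M^{**}$ is genuinely the connecting morphism produced by the spliced resolution (not merely abstractly isomorphic to it) and that the identifications of the $\Ext$ terms are compatible with the differentials. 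This is where the hypothesis that $\A$ is a dualizing $\kk$-variety enters crucially: it guarantees that all the modules involved lie in $\mod\A$ or $\mod(\A^\op)$, so that the $\A$-duality behaves well and the comparison of complexes takes place inside the relevant abelian categories.

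For the final clause $M^{**}\in\Omega^2(\mod\A)$, I would argue as follows. Apply $(-)^*$ to the projective presentation $P_{a_0}^*\to P_{a_1}^*$ of $\Tr M$; since $(-)^*$ is left exact and takes $P_{a_i}^*$ to $P_{a_i}$, the dual complex $0\to(\Tr M)^*\to P_{a_1}\to P_{a_0}$ is exact, exhibiting $M^{**}=\operatorname{im}(P_{a_1}\to P_{a_0})$ as the image of a map between finitely generated projectives, hence as a second syzygy: choosing a projective cover of $M^{**}$ and iterating, or more directly noting that $0\to M^{**}\to P_{a_0}$ embeds $M^{**}$ into a projective whose cokernel — namely $M$, up to the low-degree corrections already accounted for — is again finitely presented, places $M^{**}$ in $\Omega^2(\mod\A)$. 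Concretely, the exact sequence $0\to M^{**}\to P_{a_0}\to \coker\to 0$ together with a projective cover of $\coker$ realizes $M^{**}$ as $\Omega^2$ of a finitely presented module, as claimed. This part is essentially formal once the four-term sequence has been established, and presents no serious difficulty beyond the verification already carried out.
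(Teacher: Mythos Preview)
Your construction of the four-term exact sequence is essentially the paper's argument: choose a projective presentation of $M$, dualize to get the defining four-term sequence for $\Tr M$, resolve $M^*$ further by projectives, and double-dualize. The paper writes this out as an explicit diagram comparing the double-dualized complex with the original presentation, while you describe it more discursively, but the content is the same.

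The final paragraph, however, contains a genuine error. You write that the exact sequence $0\to(\Tr M)^*\to P_{a_1}\to P_{a_0}$ ``exhibits $M^{**}=\operatorname{im}(P_{a_1}\to P_{a_0})$''. This is false on both counts: the sequence identifies $(\Tr M)^*$, not $M^{**}$, with the \emph{kernel} of $P_f\colon P_{a_1}\to P_{a_0}$; and the image of $P_f$ is $\Omega M$, not $M^{**}$. There is no natural monomorphism $M^{**}\hookrightarrow P_{a_0}$; the canonical map goes the other way, $P_{a_0}\twoheadrightarrow M\to M^{**}$. So the argument you sketch does not place $M^{**}$ inside any projective.

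The correct reason that $M^{**}\in\Omega^2(\mod\A)$ uses the \emph{other} projective presentation already present in your proof. You chose projectives $P_{b_0}^*\to P_{b_1}^*\to M^*\to 0$ when resolving $M^*$; since $(-)^*$ is left exact and an equivalence on projectives, dualizing gives an exact sequence
\[
  0\longrightarrow M^{**}\longrightarrow P_{b_1}\longrightarrow P_{b_0},
\]
which exhibits $M^{**}$ as the kernel of a map between finitely generated projective $\A$-modules, hence as a second syzygy of the cokernel. This is immediate from what you have already set up; the mistake was reaching for the wrong pair of projectives.
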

\begin{proof}
  The proof of \cite[Prop. 6.3]{auslander_coherent_1966} carries
  over. We give a direct proof for the convenience of the reader.  Let
  $P_a\to P_{a'}\to M\to0$ be a projective presentation of $M$. By
  definition, the $\A$-duality yields is an exact sequence
  \[
    \begin{tikzcd}[column sep=small]
      0\rar&M^*\rar&P_{a'}^*\rar&P_a^*\rar&\Tr M\rar&0.
    \end{tikzcd}
  \]
  Let $P_{b}^*\to P_{b'}^*\to M^*\to0$ be a projective presentation of
  $M^*$. Thus, we obtain a commutative diagram
  \[
    \begin{tikzcd}[row sep=small]
      P_a^{**}\rar\dar&P_{a'}^{**}\rar\dar&P_{b'}^{**}\rar&P_{b}^{**}\\
      0\rar&M\dar\rar&M^{**}\rar\uar&0\uar\\
      &0&0\uar
    \end{tikzcd}
  \]
  in which the sequences $P_a^*{**}\to P_{a'}^{**}\to M\to 0$ and
  $0\to M^{**}\to P_{b'}^{**}\to P_b^{**}$ are exact. Moreover, it is
  readily seen that the kernel of $M\to M^{**}$ is isomorphic to the
  cohomology of the top row at $P_{a'}^{**}$ which is isomorphic to
  $\Ext_{\A^\op}^1(\Tr M,-)|_{\A^\op}$; similarly, the cokernel of
  $M\to M^{**}$ is isomorphic to the cohomology of the top row at
  $P_{b'}^{**}$ which is isomorphic
  $\Ext_{\A^\op}^2(\Tr M,-)|_{\A^\op}$. This yields the required exact
  sequence.  The second claim follows immediately from the
  construction of the Auslander--Bridger transposition.
\end{proof}

For each $k\geq1$ we consider the functor
$\Tr_k:=\Tr\Omega^{k-1}\colon\smod\A\to\smod(\A^\op)$. These functors
are instrumental in higher Auslander--Reiten theory, see
\cite{iyama_higher-dimensional_2007}. We need the following well-known
property.

\begin{proposition}
  \th\label{technical_lemma} Let $\A$ be a dualizing
  $\kk$-variety. Then, for each $M\in\mod\A$ and for each $k\ge1$
  there is an exact sequence
  \[
    \begin{tikzcd}[column sep=small]
      0\rar&\Ext_\A^k(M,-)|_{\A}\rar{\varphi}&\Tr_k
      M\rar&\Omega\Tr_{k+1}M\rar&0.
    \end{tikzcd}
  \]
  such that $\varphi^*=0$.
\end{proposition}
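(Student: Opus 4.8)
The plan is to derive the claimed exact sequence by splicing together two instances of the Auslander--Bridger sequence (\th\ref{auslander-bridger_sequence}): one for $M$ and one for the syzygy $\Omega^{k-1}M$, the latter producing $\Tr_k M = \Tr\Omega^{k-1}M$ directly. First, I would reduce to the case $k=1$: applying the case $k=1$ to the module $\Omega^{k-1}M$ and using that $\Ext^1_\A(\Omega^{k-1}M,-)|_\A \cong \Ext^k_\A(M,-)|_\A$ (dimension shift, valid since $\mod\A$ has enough projectives by \th\ref{minimal_proj_res} and the preceding discussion) together with $\Tr_{k+1}M = \Tr\Omega^k M = \Tr_1(\Omega^{k-1}M)\circ\Omega$ — more precisely $\Omega\Tr_{k+1}M = \Omega\Tr\Omega(\Omega^{k-1}M)$, which matches the $k=1$ statement applied to $N:=\Omega^{k-1}M$. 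So it suffices to produce, for every $N\in\mod\A$, an exact sequence
\[
  \begin{tikzcd}[column sep=small]
    0\rar&\Ext_\A^1(N,-)|_{\A}\rar{\varphi}&\Tr N\rar&\Omega\Tr_{2}N\rar&0
  \end{tikzcd}
\]
with $\varphi^*=0$, where $\Tr_2 N=\Tr\Omega N$.

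Next, for the $k=1$ case I would work with a minimal projective presentation $P_{a_1}\xrightarrow{P_f}P_{a_0}\to N\to 0$, so that $\Omega N = \ker(P_{a_0}\to N)$ sits in $0\to\Omega N\to P_{a_0}\to N\to 0$, and a further presentation $P_{a_2}\to P_{a_1}\to\Omega N\to 0$ assembles into a projective resolution $P_{a_2}\to P_{a_1}\to P_{a_0}\to N\to 0$. Applying the $\A$-duality $(-)^*$ (a duality $\A_\A\to\A_{\A^\op}$) gives a complex $P_{a_0}^*\to P_{a_1}^*\to P_{a_2}^*$ whose cohomology at $P_{a_1}^*$ is $\Ext^1_\A(N,-)|_\A$ by definition, while $\Tr N=\coker(P_{a_0}^*\to P_{a_1}^*)$ and $\Tr_2 N=\Tr\Omega N=\coker(P_{a_1}^*\to P_{a_2}^*)$. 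The image of $\Ext^1_\A(N,-)|_\A$ in $\Tr N$ is precisely the kernel of the induced map $\Tr N\to P_{a_2}^*$, and a short diagram chase identifies the image of $\Tr N\to P_{a_2}^*$ with $\ker(\Tr_2 N\to\text{(next term)})=\Omega\Tr_2 N$ — here one uses that $\Tr_2 N$ has $P_{a_2}^*$ as the term of a projective presentation, so $\Omega\Tr_2 N$ is exactly the image of $P_{a_1}^*\to P_{a_2}^*$. This yields the desired three-term exact sequence; the map $\varphi$ is the inclusion of the cohomology group into the cokernel.

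For the vanishing $\varphi^*=0$: applying $(-)^*$ to $\varphi\colon\Ext^1_\A(N,-)|_\A\to\Tr N$, I would note that $(\Tr N)^*$ fits (via \th\ref{auslander-bridger_sequence} applied appropriately, or directly) into the natural map $N\to N^{**}$, while $\Ext^1_\A(N,-)|_\A$ is, by the Auslander--Bridger sequence for the transpose, a subobject of something killed by $(-)^*$ — concretely $\Ext^1_\A(N,-)|_\A\in\Omega^2(\ldots)$-type torsion and $(-)^*$ is left exact and kills modules with no projective summand in low degree in the relevant sense. The cleanest route: factor $\varphi$ through the inclusion $\Ext^1_\A(N,-)|_\A\hookrightarrow \Tr\Omega^{-1}(\text{something})$ is not quite available, so instead I would directly compute $\varphi^*$ using that $\varphi$ is induced by a map of complexes that is null-homotopic after dualizing, because the composite $P_{a_0}^*\to P_{a_1}^*$ already witnesses that the sub $\Ext^1$ dies under Hom into projectives — any morphism $\Tr N\to P_b$ restricts to zero on the subobject $\Ext^1_\A(N,-)|_\A$ since that subobject is generated by cycles hit from $P_{a_0}^*$.

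\textbf{Main obstacle.} The genuinely delicate point is the identification of the cokernel term as $\Omega\Tr_{2}N$ (equivalently $\Omega\Tr_{k+1}M$ in general) \emph{together with} the verification that the connecting map lands exactly in the syzygy and not merely in some submodule: this requires care about minimality of the presentations (to ensure syzygies are the ones computed by $\Omega$ in $\smod\A$, using \th\ref{minimal_proj_res}) and a clean bookkeeping of which projective appears in which degree after dualization. The second tricky point is $\varphi^*=0$, which I expect to follow formally from the Auslander--Bridger sequence of \th\ref{auslander-bridger_sequence} applied to $N$ — since that sequence exhibits $\ker(N\to N^{**})=\Ext^1_{\A^\op}(\Tr N,-)|_{\A^\op}$, dualizing $\varphi$ and chasing through the biduality map shows $\varphi^*$ factors through a map into such an $\Ext^1$ term composed with the canonical map to $N^{**}$, which is zero on that kernel — but getting the variances and the two-sided transpose bookkeeping exactly right is where most of the writing will go.
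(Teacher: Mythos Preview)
Your construction of the exact sequence is essentially the paper's argument: work with a projective resolution of $M$ (or, after your reduction, of $N=\Omega^{k-1}M$), dualize, and read off $\Ext^k_\A(M,-)|_\A$ as the cohomology at $P_{a_1}^*$, $\Tr_k M$ as the cokernel of $P_{a_0}^*\to P_{a_1}^*$, and $\Omega\Tr_{k+1}M$ as the image of $P_{a_1}^*\to P_{a_2}^*$. The paper presents this as a single Snake Lemma diagram rather than reducing to $k=1$, but the content is identical.

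The genuine gap is in your argument for $\varphi^*=0$. Your claim that ``that subobject is generated by cycles hit from $P_{a_0}^*$'' is false: elements coming from $P_{a_0}^*$ are precisely the \emph{boundaries}, which are already zero in $\Tr N$; the subobject $\Ext^1_\A(N,-)|_\A$ consists of the cycles \emph{modulo} the boundaries, so knowing that a morphism $\Tr N\to P_b^*$ kills the image of $P_{a_0}^*$ is automatic and tells you nothing about its restriction to $\Ext^1$. Your alternative via \th\ref{auslander-bridger_sequence} and biduality is too vague to evaluate, and I do not see how to make it work without essentially rediscovering the missing step.

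What the paper does, and what you are missing, is to go back to the \emph{undualized} resolution. Given a morphism $\Tr N\to P^*$ with $P$ projective, lift it to $f^*\colon P_{a_1}^*\to P^*$ with $(P_{a_0}^*\to P_{a_1}^*\to P^*)=0$. Since $(-)^*$ is a duality on projectives, this is the dual of $f\colon P\to P_{a_1}$ with $(P\to P_{a_1}\to P_{a_0})=0$. Now use that $P_{a_2}\to P_{a_1}\to P_{a_0}$ is exact (it is part of a projective resolution of $N$) and that $P$ is projective to lift $f$ through $P_{a_2}\to P_{a_1}$. Dualizing back, $f^*$ factors through $P_{a_1}^*\to P_{a_2}^*$, hence vanishes on $\ker(P_{a_1}^*\to P_{a_2}^*)$ and in particular on $\Ext^1_\A(N,-)|_\A$. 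The key point you did not identify is that the exactness needed for the lifting lives on the \emph{covariant} side, before applying $(-)^*$.
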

\begin{proof}
  We include a proof for the convenience of the reader. The first part
  of the proof is analogous to the proof of
  \th\ref{auslander-bridger_sequence}. Let $M\in\mod\A$ and
  $P_\bul\to M$ a projective resolution of $M$. For each $k\geq1$ the
  $\A$-duality yields an exact sequence
  \[
    \begin{tikzcd}[column sep=small]
      0\rar&(\Omega^k M)^*\rar&P_k^*\rar&P_{k+1}^*\rar&\Tr_{k+1}
      M\rar&0
    \end{tikzcd}
  \]
  It is readily verified that there exist a commutative diagram with
  exact rows and columns
  \[
    \begin{tikzcd}[column sep=small]
      P_{k-1}^*\dar[equals]\rar&(\Omega^kM)^*\dar[tail]\rar&\Ext_\A^1(\Omega^{k-1}M,-)|_\A\dar[dotted,tail]{\varphi}\rar&\Ext_\A^1(P_{k-1},-)|_\A=0\\
      P_{k-1}^*\dar\rar&P_k^*\dar[two heads]\rar[two heads]&\Tr_kM\dar[dotted,two heads]{\psi}\\
      0\rar&\Omega\Tr_{k+1}M\rar[equals]&\Omega\Tr_{k+1}M
    \end{tikzcd}
  \]
  where the dotted column can be seen to be exact by applying the
  Snake Lemma to the leftmost two columns. The first claim follows
  since $\Ext_\A^1(\Omega^{k-1}M,-)|_{\A}\cong\Ext_\A^k(M,-)|_\A$.
  
  It remains to show that $\varphi^*=0$. Thus, we need to show that
  every morphism $\Tr_k M\to P^*$ where $P$ is a projective
  $\A$-module factors through $\psi$. Equivalently, we need to show
  that every morphism $P_k^*\to P^*$ such that the composition with
  $P_{k-1}^*\to P_k^*$ vanishes factors through
  $P_k^*\to \Omega\Tr_{k+1} M$. Indeed, let $f^*\colon P_k^*\to P^*$
  be such a morphism. Then, since $P$ is projective and the complex
  $P_\bul\to M$ is exact, there is a commutative diagram
  \[
    \begin{tikzcd}[column sep=small]
      &P\dar{f}\drar{0}\dlar[dotted,swap]{g}\\
      P_{k+1}\rar&P_k\rar&P_{k-1}
    \end{tikzcd}
  \]
  By applying the $\A$-duality to this diagram we deduce that $f^*$
  factors through $P_k^*\to P_{k+1}^*$, which implies the required
  factorization.  This shows that $\varphi^*=0$.
\end{proof}

Let $\A$ be a dualizing $\kk$-variety. We recall from
\cite{auslander_stable_1974} that there is a unique bifunctor
$-\otimes_\A-\colon \Mod\A\times\Mod(\A^\op)\to\Mod\kk$, called of
course the \emph{tensor product}, characterized by the following
properties:
\begin{enumerate}
\item Let $M\in\Mod\A$. The functor
  $M\otimes_\A-\colon\Mod(\A^\op)\to\Mod\kk$ is right exact, commutes
  with direct sums and for each $a\in\A$ there is an equality
  $M\otimes_\A P_a^*=\Hom_\A(P_a,M)$.
\item Let $N\in\Mod(\A^\op)$. The functor
  $-\otimes_\A N\colon\Mod\A\to\Mod\kk$ is right exact, commutes with
  direct sums and for each $a\in\A$ there is an equality
  $P_a\otimes_\A N=\Hom_{\A^\op}(P_a^*,N)$.
\end{enumerate}
For an arbitrary $\A^\op$-module $N$, the functors
$\Tor_k^\A(-,N)\colon\mod\A\to\Mod\kk$ are defined as usual, that is
as the left derived functors of $-\otimes_\A N$. We need the following
well known isomorphism from homological algebra,
\cf~\cite[Prop. 5.3]{cartan_homological_1999}.

\begin{lemma}
  \th\label{tor_homext} Let $\A$ be a dualizing $\kk$-variety,
  $M\in\mod\A$ and $I$ an injective $\A^\op$-module. Then, for each
  $k\geq 0$ there is a natural isomorphism
  \[
    \Tor_k^\A(M,I)\cong\Hom_{\A^\op}(\Ext_\A^k(M,-)|_\A,I).
  \]
\end{lemma}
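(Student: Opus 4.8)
The plan is to reduce everything to the standard fact that an exact (contravariant) functor commutes with passage to (co)homology, by realising both sides of the asserted isomorphism as the homology of explicit complexes built from a \emph{single} projective resolution of $M$.

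First I would fix a projective resolution $P_\bullet\to M$ of $M$ inside $\mod\A$ by finitely generated projective modules; this exists since $\mod\A$ is abelian and has projective covers by \th\ref{minimal_proj_res}, and each syzygy again lies in $\mod\A$ because $\A$ is a dualizing $\kk$-variety. By definition $\Tor_k^\A(M,I)=H_k(P_\bullet\otimes_\A I)$. Each $P_i$ is a finite direct sum of representable modules, so the defining property $P_a\otimes_\A N=\Hom_{\A^\op}(P_a^*,N)$ of the tensor product, together with the $\A$-duality $(-)^*\colon\A_\A\to\A_{\A^\op}$, yields isomorphisms $P_i\otimes_\A I\cong\Hom_{\A^\op}(P_i^*,I)$ which are compatible with the differentials; hence an isomorphism of chain complexes $P_\bullet\otimes_\A I\cong\Hom_{\A^\op}(P_\bullet^*,I)$, where $P_\bullet^*$ denotes the cochain complex of finitely generated projective $\A^\op$-modules obtained by applying $(-)^*$ to $P_\bullet$.

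Next I would identify the cohomology of $P_\bullet^*$. By Yoneda, $\Hom_\A(P_a,P_b)\cong\A(a,b)\cong P_a^*(b)$, so the complex of $\A^\op$-modules $b\mapsto\Hom_\A(P_\bullet,P_b)$ is canonically $P_\bullet^*$; since kernels and cokernels in a functor category are computed pointwise, its $k$-th cohomology is the $\A^\op$-module $b\mapsto\Ext_\A^k(M,P_b)$, that is $\Ext_\A^k(M,-)|_\A$. Thus $H^k(P_\bullet^*)\cong\Ext_\A^k(M,-)|_\A$.

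Finally, since $I$ is an injective $\A^\op$-module the contravariant functor $\Hom_{\A^\op}(-,I)$ is exact, and therefore commutes with (co)homology; applying this to the cochain complex $P_\bullet^*$ gives
\[
  \Tor_k^\A(M,I)\cong H_k\bigl(\Hom_{\A^\op}(P_\bullet^*,I)\bigr)\cong\Hom_{\A^\op}\bigl(H^k(P_\bullet^*),I\bigr)\cong\Hom_{\A^\op}\bigl(\Ext_\A^k(M,-)|_\A,I\bigr),
\]
and all the isomorphisms involved are visibly natural in $M$ and in $I$. The only step that deserves a moment of care is that exactness of $\Hom_{\A^\op}(-,I)$ forces it to commute with homology — proved routinely by breaking the differentials of $P_\bullet^*$ into short exact sequences — so I do not anticipate any genuine obstacle.
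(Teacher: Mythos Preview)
Your argument is correct and follows essentially the same route as the paper's own proof: fix a projective resolution $P_\bullet\to M$, identify $P_\bullet\otimes_\A I$ with $\Hom_{\A^\op}(P_\bullet^*,I)$ via the defining property of the tensor product, compute $H^k(P_\bullet^*)\cong\Ext_\A^k(M,-)|_\A$, and use exactness of $\Hom_{\A^\op}(-,I)$ to commute it past homology. The only differences are cosmetic---you spell out the Yoneda identification and the naturality a bit more explicitly than the paper does.
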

\begin{proof}
  We give a proof for the convenience of the reader.  Let
  $P_\bul\to M$ be a projective resolution of $M$. On one hand, for
  each $k\geq0$ the homology of the complex
  \begin{equation}
    \label{eq:tor}
    \begin{tikzcd}[column sep=small]
      \cdots\rar&P_k\otimes_\A I\rar&\cdots\rar&P_1\otimes_\A
      I\rar&P_0\otimes_\A I\rar&0
    \end{tikzcd}
  \end{equation}
  at $P_k\otimes_\A I$ is isomorphic to $\Tor_k^\A(M,I)$.

  On the other hand, for each $k\geq0$ the homology of the complex
  \[
    \begin{tikzcd}[column sep=small]
      0\rar&P_0^*\rar&P_1^*\rar&\cdots\rar&P_k^*\rar&\cdots
    \end{tikzcd}
  \]
  at $P_k^*$ is isomorphic to $\Ext_\A^k(M,-)|_\A$. Since $I$ is
  injective, the contravariant functor $\Hom_{\A^\op}(-,I)$ is exact,
  hence it preserves homology. Thus, for each $k\geq0$ the homology of
  the complex
  \begin{equation}
    \label{eq:hom_ext}
    \begin{tikzcd}[column sep=tiny]
      \cdots\rar&\Hom_{\A^\op}(P_k^*,I)\rar&\cdots\rar&\Hom_{\A^\op}(P_1^*,I)\rar&\Hom_{\A^\op}(P_0^*,I)\rar&0.
    \end{tikzcd}
  \end{equation}
  at $\Hom_{\A^\op}(P_k^*,I)$ is isomorphic to
  $\Hom_{\A^\op}(\Ext_\A^k(M,-)|_\A,I)$.

  Finally, by the definition of the tensor product, the complexes
  \eqref{eq:tor} and \eqref{eq:hom_ext} are isomorphic, therefore they
  have isomorphic homologies. This finishes the proof.
\end{proof}

Let $\A$ be a dualizing $\kk$-variety. Recall that the \emph{global
  dimension of $\A$}, denoted by $\gldim\A$, is the supremum of all
the projective dimensions of finitely presented $\A$-modules. The
duality $D\colon\mod\A\to\mod(\A^\op)$ implies
$\gldim\A=\gldim\A^\op$. We also recall the definition of the
\emph{dominant dimension} of a dualizing $\kk$-variety. Let $\A$ be a
dualizing $\kk$-variety and $d$ a positive integer. We say that
$\domdim\A\geq d+1$ if for all $a\in\A$ there exists an injective
coresolution
\[
  \begin{tikzcd}[column sep=small]
	0\rar&P_a\rar&I^0\rar&I^1\rar&\cdots\rar&I^d\rar&\cdots
  \end{tikzcd}
\]
such that $I^0,I^1,\dots,I^d$ are projective $\A$-modules.  As in the
case of artin algebras \cite{tachikawa_dominant_1964}, there is an equality
\[
  \domdim\A=\domdim\A^{\op}.
\]
This is a consequence of the left-right symmetry of
Auslander's $k$-Gorenstein property \cite[Thm.
3.7(c)$\Leftrightarrow$(d)]{fossum_trivial_1975} and a categorical version of
\cite[Thm. 1.1]{iyama_symmetry_2003}.

\begin{definition}
  Let $\A$ be a dualizing $\kk$-variety and $d$ a positive integer. We
  say that $\A$ is a \emph{$d$-Auslander dualizing $\kk$-variety} if
  \[
    \gldim\A\leq d+1\leq\domdim\A.
  \]
  If $d=1$, then we simply say that $\A$ is an \emph{Auslander
    dualizing $\kk$-variety}.
\end{definition}

\subsection{$d$-cluster-tilting subcategories}

We now recall the definition of $d$-cluster-tilting subcategory. For
convenience, we introduce the following notation.  Let $\A$ be a
dualizing $\kk$-variety. Given a subcategory $\X$ of $\mod\A$, we
define the subcategories
\[
  \lperp{}\X:=\setP{M\in\mod\A}{\Hom_\A(M,\X)=0}
\]
and, for $d\geq1$,
\[
  \lperp{d-1}\X:=\setP{M\in\mod\A}{\forall
    k\in\set{1,\dots,d-1}\,\Ext_\A^k(M,\X)=0}.
\]
The subcategories $\X^\perp$ and $\X^{\perp_{d-1}}$ are defined
dually. Note that $\lperp{0}{\X}=\mod\A$, hence $\lperp{}\X$ and
$\lperp{0}\X$ are different in general. The subcategory $\X$ is
\emph{$d$-rigid} if $\X\subseteq\lperp{d-1}\X$.

\begin{definition}
  \cite[Def. 2.2]{iyama_higher-dimensional_2007} Let $\A$ be a
  dualizing $\kk$-variety, $\M\subseteq\mod\A$ a functorially finite
  subcategory and $d\geq1$. We say that $\M$ is
  \emph{$d$-cluster-tilting} if the equalities
  $\M=\lperp{d-1}\M=\M^{\perp_{d-1}}$ hold.
\end{definition}

\begin{remark}
  Let $\A$ be a dualizing $\kk$-variety. Then, $\mod\A$ has a unique
  $1$-cluster-tilting subcategory, namely $\mod\A$ itself.
\end{remark}

We need the following characterization of $d$-cluster-tilting
subcategories.

\begin{proposition}
  \th\label{char_tilting}
  \cite[Prop. 2.2.2]{iyama_higher-dimensional_2007} Let $\A$ be a
  dualizing $\kk$-variety and $\M\subseteq\mod\A$ a functorially
  finite subcategory. Then, the following statements are equivalent.
  \begin{enumerate}
  \item The subcategory $\M$ is $d$-cluster-tilting.
  \item There is an equality $\M=\lperp{d-1}\M$ and $\M$ contains all
    injective $\A$-modules.
  \item There is an equality $\M=\M^{\perp_{d-1}}$ and $\M$ contains
    all projective $\A$-modules.
  \end{enumerate}
\end{proposition}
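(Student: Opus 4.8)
The plan is to show the cyclic chain of implications (a)$\Rightarrow$(b)$\Rightarrow$(a) and then (a)$\Rightarrow$(c)$\Rightarrow$(a), exploiting the self-duality of the statement under $D\colon\mod\A\to\mod(\A^\op)$: since $D$ sends injective $\A$-modules to projective $\A^\op$-modules, exchanges $\lperp{d-1}(-)$ with $(-)^{\perp_{d-1}}$, and preserves functorial finiteness, the implications involving (c) follow from those involving (b) applied to $\A^\op$. So the real work is entirely in the equivalence (a)$\Leftrightarrow$(b).

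For (a)$\Rightarrow$(b) there is essentially nothing to do beyond unwinding definitions: if $\M=\lperp{d-1}\M=\M^{\perp_{d-1}}$ then in particular $\M=\lperp{d-1}\M$, and every injective $\A$-module $I$ satisfies $\Ext^k_\A(N,I)=0$ for all $N$ and all $k\ge1$, so $I\in\lperp{d-1}\M=\M$. Hence $\M$ contains all injectives. The substantive direction is (b)$\Rightarrow$(a). Assuming $\M=\lperp{d-1}\M$ and that $\M$ contains all injective $\A$-modules, I must prove $\M=\M^{\perp_{d-1}}$. The inclusion $\M\subseteq\M^{\perp_{d-1}}$ is the $d$-rigidity of $\M$, which is immediate from $\M\subseteq\lperp{d-1}\M$ (it says $\Ext^k_\A(\M,\M)=0$ for $1\le k\le d-1$, a left-right symmetric condition). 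For the reverse inclusion, take $N\in\M^{\perp_{d-1}}$; I want $N\in\M$, i.e. $\Ext^k_\A(N,M)=0$ for all $M\in\M$ and $1\le k\le d-1$. The idea is to resolve $N$ from the right by $\M$: using functorial finiteness of $\M$ and the fact that $\mod\A$ is an abelian category with enough injectives (\th\ref{minimal_proj_res} and the discussion after the definition of dualizing $\kk$-variety), build an exact sequence
\[
  \begin{tikzcd}[column sep=small]
    0\rar&N\rar&M^0\rar&M^1\rar&\cdots\rar&M^{d-1}\rar&C\rar&0
  \end{tikzcd}
\]
with each $M^i\in\M$. One obtains the $M^i$ by repeatedly taking left $\M$-approximations; since $\M$ contains all injectives, every object of $\mod\A$ has a monic left $\M$-approximation (embed into an injective and compose), so each syzygy in this coresolution can be chosen to start with a monomorphism, guaranteeing exactness. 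Then, since $N\in\M^{\perp_{d-1}}$ and each $M^i\in\M\subseteq\lperp{d-1}\M$, a dimension-shifting argument along this sequence forces $\Ext^k_\A(C,M)=0$ for $M\in\M$ in the relevant range, and in particular the approximation property together with $\Ext$-vanishing forces the sequence to split off $C$, or more precisely forces $C\in\lperp{d-1}\M=\M$ and then the last term of the sequence to split, yielding $N\in\M$ by closure of $\M$ under the relevant syzygies/summands.

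The step I expect to be the main obstacle is the construction of the $\M$-coresolution together with the precise dimension-shifting bookkeeping that closes the loop: one must verify that functorial finiteness of $\M$ in $\mod\A$, combined with $\mod\A$ being abelian with enough injectives and $\M\supseteq(\text{injectives})$, genuinely produces \emph{monomorphic} left $\M$-approximations at every stage (so that the long sequence above is exact), and then that the vanishing $\Ext^k_\A(N,\M)=0$ for $1\le k\le d-1$ propagates correctly under dimension shift to show each intermediate cosyzygy lies in $\lperp{d-1}\M$. This is exactly the point where \cite{iyama_higher-dimensional_2007}'s argument lives; since the proposition is quoted verbatim from \cite[Prop.~2.2.2]{iyama_higher-dimensional_2007}, in the paper one may simply cite it, but the sketch above indicates how the Artin-algebra proof transfers once one has \th\ref{minimal_proj_res} and the functorial-finiteness toolkit (\th\ref{dkv_ff}, \th\ref{lemma_DA}) available in the dualizing $\kk$-variety setting.
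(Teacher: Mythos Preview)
The paper gives no proof of this proposition: it is quoted directly from \cite[Prop.~2.2.2]{iyama_higher-dimensional_2007} with no accompanying argument, exactly as you anticipate in your closing remark. So there is nothing to compare against beyond the citation.

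Your sketch is the standard argument and is correct in outline. Two small points are worth tightening. First, in (a)$\Rightarrow$(b) the vanishing $\Ext^k_\A(N,I)=0$ for all $N$ places $I$ in $\M^{\perp_{d-1}}$, not in $\lperp{d-1}\M$ as you wrote; this is harmless since both equal $\M$ under (a). Second, in (b)$\Rightarrow$(a) the final phrase ``closure of $\M$ under the relevant syzygies/summands'' hides the one place where the hypothesis $N\in\M^{\perp_{d-1}}$ is actually used: after showing $N^{d-1}\in\lperp{d-1}\M=\M$ via the left-approximation property (as you indicate), one runs a \emph{downward} induction in which each short exact sequence $0\to N^i\to M^i\to N^{i+1}\to 0$ splits. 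The splitting at stage $i$ comes from $\Ext^1_\A(N^{i+1},N^i)=0$, which holds because $N^{i+1}\in\M$ (inductively) and $\Ext^1_\A(\M,N^i)\hookrightarrow\Ext^{i+1}_\A(\M,N)=0$ for $i\le d-2$ by dimension shift along the coresolution; thus $N^i$ is a summand of $M^i\in\M$. A coresolution with $d-1$ middle terms already suffices. This is precisely the bookkeeping you flagged as the main obstacle, and it goes through as expected.
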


We recall the following property of $d$-cluster-tilting subcategories,
which exposes their higher homological nature.

\begin{proposition}
  \th\label{not_long} \cite[Lemma 3.5]{iyama_cluster_2011} Let $\A$ be
  a dualizing $\kk$-variety and $\M\subseteq\mod\A$ a
  $d$-cluster-tilting subcategory. Then, for each $X\in\M$ and for
  each
  \[
    \begin{tikzcd}[column sep=small]
      0\rar&L\rar&M^1\rar&\cdots\rar&M^d\rar&N\rar&0
    \end{tikzcd}
  \]
  exact sequence in $\mod\A$ whose terms lie in $\M$ there are exact
  sequences
  \[
    \begin{tikzcd}[column sep=tiny, row sep=tiny]
      0\rar&\Hom_\A(X,L)\rar&\Hom_\A(X,M^1)\rar&\cdots\rar&\Hom_\A(X,M^d)\rar&\Hom_\A(X,N)\rar&{}\\
      {}\rar&\Ext_\A^d(X,L)\rar&\Ext_\A^d(X,M^1)
    \end{tikzcd}
  \]
  and
  \[
    \begin{tikzcd}[column sep=tiny, row sep=tiny]
      0\rar&\Hom_\A(N,X)\rar&\Hom_\A(M^d,X)\rar&\cdots\rar&\Hom_\A(M^1,X)\rar&\Hom_\A(L,X)\rar&{}\\
      {}\rar&\Ext_\A^d(N,X)\rar&\Ext_\A^d(M^d,X).
    \end{tikzcd}
  \]
\end{proposition}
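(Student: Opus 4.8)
The plan is to prove the first exact sequence; the second then follows by applying the first to the dual situation, using the duality $D\colon\mod\A\to\mod(\A^\op)$ (which sends $\M$ to a $d$-cluster-tilting subcategory of $\mod(\A^\op)$, reverses arrows, and interchanges $\Hom$ with $\Hom$ and $\Ext^d$ with $\Ext^d$). So I focus on producing, for a given exact sequence $0\to L\to M^1\to\cdots\to M^d\to N\to 0$ with all terms in $\M$ and a fixed $X\in\M$, the long exact sequence
\[
  \begin{tikzcd}[column sep=tiny]
    0\rar&\Hom_\A(X,L)\rar&\cdots\rar&\Hom_\A(X,M^d)\rar&\Hom_\A(X,N)\rar&\Ext_\A^d(X,L)\rar&\Ext_\A^d(X,M^1).
  \end{tikzcd}
\]

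First I would break the long exact sequence into short exact sequences in the usual way: set $C^0:=L$, and for $i=1,\dots,d-1$ let $C^i:=\operatorname{im}(M^i\to M^{i+1})=\coker(M^{i-1}\to M^i)$, so that $C^{d-1}=\ker(M^d\to N)$ and we obtain short exact sequences $0\to C^{i-1}\to M^i\to C^i\to 0$ for $i=1,\dots,d$ (with the convention $C^d:=N$, and reading the last one as $0\to C^{d-1}\to M^d\to N\to 0$). The key observation is that each $C^i$ with $0<i<d$ is \emph{not} in $\M$ in general, but because $\M=\lperp{d-1}\M$ we still have good $\Ext$-vanishing: applying $\Hom_\A(-,X')$ for $X'\in\M$ to these short exact sequences and using the long exact $\Ext$ sequence, together with $\Ext_\A^k(M^i,X')=0$ for $1\le k\le d-1$, one shows by a descending/ascending induction on $i$ that $\Ext_\A^k(C^i,X')=0$ for $1\le k\le d-1-i$ and, dually in the argument for covariant $\Ext$ against $X$, that $\Ext_\A^k(X,C^i)=0$ for $1\le k\le d-1-i$ and $\Ext_\A^{k}(X,C^i)\cong\Ext_\A^{k+i}(X,C^0)=\Ext_\A^{k+i}(X,L)$ for the relevant small $k$. (Here I use that $X\in\M=\M^{\perp_{d-1}}$ gives $\Ext_\A^k(X,M^i)=0$ for $1\le k\le d-1$.)

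Concretely: applying $\Hom_\A(X,-)$ to $0\to C^{i-1}\to M^i\to C^i\to 0$ gives, for each $i$, a long exact sequence; since $\Ext_\A^k(X,M^i)=0$ for $1\le k\le d-1$, the connecting maps yield isomorphisms $\Ext_\A^k(X,C^i)\cong\Ext_\A^{k+1}(X,C^{i-1})$ for $1\le k\le d-2$, and a four-term exact piece $\Hom_\A(X,M^i)\to\Hom_\A(X,C^i)\to\Ext_\A^1(X,C^{i-1})\to 0$ at the bottom together with $0\to\Ext_\A^{d-1}(X,C^i)\to\Ext_\A^d(X,C^{i-1})\to\Ext_\A^d(X,M^i)$ at the top. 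Splicing the $\Hom$-parts of these short exact sequences across $i=1,\dots,d$ produces exactly the $\Hom_\A(X,-)$ portion of the desired long exact sequence, where the map $\Hom_\A(X,N)\to\Ext_\A^d(X,L)$ is obtained by composing the surjection $\Hom_\A(X,N)=\Hom_\A(X,C^d)\twoheadrightarrow\Ext_\A^1(X,C^{d-1})$ with the chain of isomorphisms $\Ext_\A^1(X,C^{d-1})\cong\Ext_\A^2(X,C^{d-2})\cong\cdots\cong\Ext_\A^d(X,C^0)=\Ext_\A^d(X,L)$; exactness at $\Hom_\A(X,N)$ is immediate from the four-term piece for $i=d$, and exactness at $\Ext_\A^d(X,L)$ comes from the top four-term piece for $i=1$, namely $\Ext_\A^{d-1}(X,C^1)\hookrightarrow\Ext_\A^d(X,L)\to\Ext_\A^d(X,M^1)$, once we identify $\Ext_\A^{d-1}(X,C^1)\cong\Ext_\A^1(X,C^{d-1})$ via the isomorphisms above.

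The main obstacle is bookkeeping: keeping track of which $\Ext$-groups of the intermediate cokernels $C^i$ vanish in which range, and checking that the composites of connecting homomorphisms assemble into a genuine complex that is exact at every spot—in particular verifying exactness at the junction $\Hom_\A(X,N)\to\Ext_\A^d(X,L)\to\Ext_\A^d(X,M^1)$, where two separate short exact sequences are being glued. There are no conceptual difficulties: everything rests on the two defining vanishing conditions $\M\subseteq\lperp{d-1}\M$ (used for $\Ext_\A^k(X,M^i)=0$, $1\le k\le d-1$, via $X\in\M^{\perp_{d-1}}$) and the standard long exact $\Ext$-sequences in the abelian category $\mod\A$. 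Finally, I would record that the $\Hom_\A(-,X)$ statement follows by applying the already-proved covariant statement to the $d$-cluster-tilting subcategory $D\M\subseteq\mod(\A^\op)$, the object $DX$, and the exact sequence $0\to DN\to DM^d\to\cdots\to DM^1\to DL\to 0$, since $D$ is an exact duality interchanging $\Hom_\A(X,-)$ with $\Hom_{\A^\op}(-,DX)$ and $\Ext_\A^d(X,-)$ with $\Ext_{\A^\op}^d(-,DX)$.
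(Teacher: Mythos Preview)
Your argument is correct and is precisely the standard one: break the $(d+2)$-term exact sequence into short exact sequences via the intermediate images $C^i$, use the $d$-rigidity $\Ext_\A^k(X,M^i)=0$ for $1\le k\le d-1$ to obtain the connecting isomorphisms $\Ext_\A^k(X,C^i)\cong\Ext_\A^{k+1}(X,C^{i-1})$, and splice. The paper itself gives no proof beyond stating that the argument of \cite[Lemma~3.5]{iyama_cluster_2011} carries over verbatim to the dualizing $\kk$-variety setting; your write-up is exactly that argument, and your reduction of the contravariant statement to the covariant one via the duality $D$ is a legitimate (and slightly slicker) alternative to repeating the symmetric computation with $\Hom_\A(-,X)$.
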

\begin{proof}
  The proof of \cite[Lemma 3.5]{iyama_cluster_2011} carries over.
\end{proof}

In view of \th\ref{not_long}, it is natural to consider the following
class of $d$-cluster-tilting subcategories which are better behaved
from the viewpoint of higher homological algebra.

\begin{defprop}
  \th\label{d-ct_h} Let $\A$ be a dualizing $\kk$-variety and
  $\M\subseteq\mod\A$ a $d$-cluster-tilting subcategory. Then, we say
  that $\M$ is \emph{$d\ZZ$-cluster-tilting} if it satisfies the following
  equivalent conditions.
  \begin{enumerate}
  \item\label{it:strongly_d-rigid} $\Ext_\A^k(\M,\M)\neq0$ implies
    that $k\in d\ZZ$.
  \item\label{it:d-syzygies} $\Omega^d(\M)\subset\M$.
  \item\label{it:d-cosyzygies} $\Omega^{-d}(\M)\subset\M$.
  \item\label{it:long_exact_sequences} For each $X\in\M$ and for each
    \[
      \begin{tikzcd}[column sep=small]
        0\rar&L\rar&M^1\rar&\cdots\rar&M^d\rar&N\rar&0
      \end{tikzcd}
    \]
    exact sequence in $\mod\A$ whose terms lie in $\M$ there is an exact
    sequence
    \[
      \begin{tikzcd}[column sep=tiny, row sep=tiny]
        0\rar&\Hom_\A(X,L)\rar&\Hom_\A(X,M^1)\rar&\cdots\rar&\Hom_\A(X,M^d)\rar&\Hom_\A(X,N)\rar&{}\\        {}\rar&\Ext_\A^d(X,L)\rar&\Ext_\A^d(X,M^1)\rar&\cdots\rar&\Ext_\A^d(X,M^d)\rar&\Ext_\A^d(X,N)\rar&{}\\
        {}\rar&\Ext_\A^{2d}(X,L)\rar&\Ext_\A^{2d}(X,M^1)\rar&\cdots\rar&\Ext_\A^{2d}(X,M^d)\rar&\Ext_\A^{2d}(X,N)\rar&\cdots.
      \end{tikzcd}
    \]
  \item\label{it:long_exact_sequences_2} For each $X\in\M$ and for each
    \[
      \begin{tikzcd}[column sep=small]
        0\rar&L\rar&M^1\rar&\cdots\rar&M^d\rar&N\rar&0
      \end{tikzcd}
    \]
    exact sequence in $\mod\A$ whose terms lie in $\M$ there is an exact
    sequence
    \[
      \begin{tikzcd}[column sep=tiny, row sep=tiny]
        0\rar&\Hom_\A(N,X)\rar&\Hom_\A(M^d,X)\rar&\cdots\rar&\Hom_\A(M^1,X)\rar&\Hom_\A(L,X)\rar&{}\\        {}\rar&\Ext_\A^d(N,X)\rar&\Ext_\A^d(M^d,X)\rar&\cdots\rar&\Ext_\A^d(M^1,X)\rar&\Ext_\A^d(L,X)\rar&{}\\
        {}\rar&\Ext_\A^{2d}(N,X)\rar&\Ext_\A^{2d}(M^d,X)\rar&\cdots\rar&\Ext_\A^{2d}(M^1,X)\rar&\Ext_\A^{2d}(L,X)\rar&\cdots.
      \end{tikzcd}
    \]
  \end{enumerate}
\end{defprop}
\begin{proof}
  First we show the equivalence between conditions
  \eqref{it:strongly_d-rigid}, \eqref{it:d-syzygies} and
  \eqref{it:d-cosyzygies}. Let $k\in\set{1,\dots,d-1}$ and
  $m\geq0$. Then, there is a sequence of isomorphisms
  \[
    \Ext_\A^k(\Omega^{dm}\M,\M)\cong\Ext_\A^k(\M,\Omega^{-dm}\M)\cong\Ext_\A^{dm+k}(\M,\M).
  \]
  Since the obstructions for the required sequences to be exact are
  precisely extension groups in degrees which are not multiples of $d$
  between $\A$-modules in $\M$, the claim follows immediately from the
  equalities $\M=\lperp{d-1}\M=\M^{\perp_{d-1}}$ and the assumption
  that $\Omega^d(M)\subset\M$ and $\Omega^{-d}(M)\subset\M$. The
  equivalence between \eqref{it:long_exact_sequences}
  (resp. \eqref{it:long_exact_sequences_2}) and conditions
  \eqref{it:strongly_d-rigid}--\eqref{it:d-cosyzygies} follows from
  \th\ref{not_long} and the existence of isomorphisms
  $\Ext_\A^{dm}(X,-)\cong\Ext_\A^{d}(\Omega^{d(m-1)}X,-)$ for all
  $m\geq1$. We leave the details to the reader.
\end{proof}

\begin{remark}
  We note that the equivalent conditions in \th\ref{d-ct_h} appear in
  the construction of Gei\ss, Keller and Oppermann of
  $(d+2)$-angulated categories from $d$-cluster-tilting subcategories
  of triangulated categories, see \cite{geiss_n-angulated_2013}.
\end{remark}

\subsection{$d$-abelian categories}

The class of $d$-abelian categories is meant to abstract the intrinsic
properties of $d$-cluster-tilting subcategories. Before giving the
definition we recall the definition of $d$-exact sequence in an
additive category, a higher analog of the classical notion of short
exact sequence.

\begin{definition}
  \cite[Defs. 2.2 and 2.4]{jasso_n-abelian_2014} Let $\A$ be an
  additive category. A sequence of morphisms in $\A$
  \[
    \begin{tikzcd}[column sep=small]
      0\rar&a_{d+1}\rar&a_d\rar&\cdots\rar&a_1\rar&a_0
    \end{tikzcd}
  \]
  is called \emph{left $d$-exact}\footnote{We borrow this terminology
    from \cite{lin_right_2014}.} if the induced sequence of functors
  \[
    \begin{tikzcd}[column sep=small]
      0\rar&\A(-,a_{d+1})\rar&\A(-,a_d)\rar&\cdots\rar&\A(-,a_1)\rar&\A(-,a_0)
    \end{tikzcd}
  \]
  is exact. We define \emph{right $d$-exact sequences} dually. A
  sequence is \emph{$d$-exact} if it is both left $d$-exact and right
  $d$-exact.
\end{definition}

\begin{remark}
  Note that $1$-exact sequences are nothing but short exact sequences
  in the usual sense.
\end{remark}

\begin{definition}
  \cite[Def. 3.1]{jasso_n-abelian_2014} Let $\A$ be an additive
  category with split idempotents. We say that $\A$ is
  \emph{$d$-abelian} if the following properties are satisfied.
  \begin{enumerate}
  \item[(A1)] For every morphism $f\colon a_1\to a_0$ in $\A$ there
    exists a left $d$-exact sequence
    \[
      \begin{tikzcd}[column sep=small]
        0\rar&a_{n+1}\rar&a_n\rar&\cdots\rar&a_1\rar{f}&a_0.
      \end{tikzcd}
    \]
  \item[(A1)$^\op$] For every morphism $f\colon a_{d+1}\to a_d$ in
    $\A$ there exists a right $d$-exact sequence
    \[
      \begin{tikzcd}[column sep=small]
        a_{d+1}\rar{f}&a_d\rar&\cdots\rar&a_1\rar&a_0\rar&0.
      \end{tikzcd}
    \]
  \item[(A2)] For every epimorphism $f\colon a_1\to a_0$ in $\A$ there
    exists a $d$-exact sequence
    \[
      \begin{tikzcd}[column sep=small]
        0\rar&a_{n+1}\rar&a_n\rar&\cdots\rar&a_1\rar{f}&a_0\rar&0.
      \end{tikzcd}
    \]
  \item[(A2)$^\op$] For every monomorphism $f\colon a_{d+1}\to a_d$ in
    $\A$ there exists a $d$-exact sequence
    \[
      \begin{tikzcd}[column sep=small]
        0\rar&a_{d+1}\rar{f}&a_d\rar&\cdots\rar&a_1\rar&a_0\rar&0.
      \end{tikzcd}
    \]
  \end{enumerate}
\end{definition}

Axioms (A1) and (A1)$^\op$ immediately imply the following statement.

\begin{proposition}
  \th\label{gldim} Let $\A$ be a $d$-abelian category. Then,
  $\gldim\A\leq d+1$ and $\gldim\A^\op\leq d+1$.
\end{proposition}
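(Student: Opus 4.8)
The plan is to read projective resolutions of length at most $d+1$ directly off the left $d$-exact sequences furnished by axioms (A1) and (A1)$^\op$, using that every finitely presented module over $\A$ (resp. over $\A^\op$) is a cokernel of a morphism between finitely generated projectives.

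First I would fix $M\in\mod\A$ together with a morphism $f\colon a_1\to a_0$ in $\A$ such that $M\cong\coker(P_f)$. Applying axiom (A1) to $f$ produces a left $d$-exact sequence $0\to a_{d+1}\to a_d\to\cdots\to a_1\xrightarrow{f}a_0$, which by definition means that
\[
  0\to P_{a_{d+1}}\to P_{a_d}\to\cdots\to P_{a_1}\xrightarrow{P_f}P_{a_0}
\]
is exact in $\Mod\A$ at every term except possibly the last; since exactness is not imposed at $P_{a_0}$, its cokernel there is precisely $\coker(P_f)\cong M$. Splicing in $M$ yields a projective resolution
\[
  0\to P_{a_{d+1}}\to P_{a_d}\to\cdots\to P_{a_1}\xrightarrow{P_f}P_{a_0}\to M\to0,
\]
so $\pdim_\A M\le d+1$; as $M$ was arbitrary, $\gldim\A\le d+1$.

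For the bound on $\gldim\A^\op$ I would observe that the axioms of a $d$-abelian category form a self-dual list: passing to $\A^\op$ interchanges left and right $d$-exact sequences and interchanges monomorphisms with epimorphisms, so $\A^\op$ is again $d$-abelian and its axiom (A1) is exactly axiom (A1)$^\op$ for $\A$. The argument of the previous paragraph then applies verbatim to $\A^\op$ and gives $\gldim\A^\op\le d+1$. If one prefers to argue inside $\A$: a finitely presented $\A^\op$-module is the cokernel of a morphism $\A(b,-)\to\A(c,-)$ induced by some $g\colon c\to b$ in $\A$, and axiom (A1)$^\op$ applied to $g$ produces, after applying the Yoneda functor in the second variable, an exact sequence of finitely generated projective $\A^\op$-modules that furnishes a resolution of the required length.

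I do not expect a genuine obstacle here; the only points demanding a little care are matching the indexing of axiom (A1) with the chosen presentation $P_{a_1}\to P_{a_0}\to M\to0$, and keeping in mind that the definition of a \emph{left $d$-exact} sequence deliberately imposes no exactness at the final term — which is exactly what accommodates the cokernel $M$ and caps the length of the resolution at $d+1$.
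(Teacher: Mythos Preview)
Your argument is correct and is precisely the ``immediate'' deduction the paper has in mind: the paper does not spell out a proof but simply states that axioms (A1) and (A1)$^\op$ immediately imply the proposition, and what you have written is exactly that unwinding via Yoneda. Nothing more is needed.
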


The following result gives a connection between $d$-cluster-tilting
subcategories and $d$-abelian categories. In fact, it is the main
motivation for the investigation of $d$-abelian categories.

\begin{theorem}
  \th\label{d-ct--d-abelian} \cite[Thm. 3.16]{jasso_n-abelian_2014} Let
  $\A$ be a dualizing $\kk$-variety and $\M\subseteq\mod\A$ a
  $d$-cluster-tilting subcategory. Then, $\M$ is a $d$-abelian
  category.
\end{theorem}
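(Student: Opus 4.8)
The plan is to verify the four axioms (A1), (A1)$^\op$, (A2), (A2)$^\op$ of a $d$-abelian category. Idempotents split in $\M$ because $\mod\A$ is abelian, hence idempotent complete, and $\M=\lperp{d-1}\M$ is closed under direct summands. The argument then rests on two ingredients. The first is immediate from \th\ref{not_long}: every exact sequence $0\to b_{d+1}\to b_d\to\cdots\to b_1\to b_0\to0$ in $\mod\A$ all of whose terms lie in $\M$ is a $d$-exact sequence in $\M$, since applying $\Hom_\A(X,-)$ and $\Hom_\A(-,X)$ for $X\in\M$ produces, by \th\ref{not_long}, exactly the exact sequences witnessing left and right $d$-exactness. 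The second ingredient, which is the technical heart of the proof, is the following resolution lemma: for every $M\in\mod\A$ there is an exact sequence $0\to M_{d-1}\to\cdots\to M_0\to M\to0$ with $M_0,\dots,M_{d-1}\in\M$, and dually an exact sequence $0\to M\to M^0\to\cdots\to M^{d-1}\to0$ with $M^0,\dots,M^{d-1}\in\M$.

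To prove the resolution lemma I would use that $\M$ is contravariantly finite in $\mod\A$ and contains $\proj\A$ (by \th\ref{char_tilting}) to build, for $M\in\mod\A$, short exact sequences $0\to\Omega_{j+1}\to M_j\to\Omega_j\to0$ ($j\ge0$) with $\Omega_0=M$, $M_j\in\M$, and $M_j\to\Omega_j$ a right $\M$-approximation; these maps are epimorphisms precisely because $\proj\A\subseteq\M$. The defining property of a right $\M$-approximation, together with the vanishing $\Ext_\A^1(\M,\M)=0$ (which holds when $d\ge2$ since $\M=\M^{\perp_{d-1}}$, the case $d=1$ being trivial as then $\M=\mod\A$), yields $\Ext_\A^1(N,\Omega_{j+1})=0$ for all $N\in\M$ and all $j\ge0$. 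Feeding this into the long exact $\Hom_\A(N,-)$-sequences and inducting on $i$ gives $\Ext_\A^i(N,\Omega_j)=0$ for all $N\in\M$ whenever $1\le i\le d-1$ and $j\ge i$; in particular $\Ext_\A^i(\M,\Omega_{d-1})=0$ for $1\le i\le d-1$, so $\Omega_{d-1}\in\M^{\perp_{d-1}}=\M$ and we may take $M_{d-1}:=\Omega_{d-1}$. The dual statement follows by the same argument applied in $\mod(\A^\op)$, using that $D\M$ is a $d$-cluster-tilting subcategory of $\mod(\A^\op)$ (equivalently, by the dual argument, using covariant finiteness, the inclusion of the injective $\A$-modules into $\M$, and $\M=\lperp{d-1}\M$).

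The axioms then follow. For (A1), given $f\colon a_1\to a_0$ in $\M$, I would splice a resolution $0\to M_{d-1}\to\cdots\to M_0\to\ker f\to0$ as above with the inclusion $\ker f\hookrightarrow a_1$ to obtain a sequence $0\to M_{d-1}\to\cdots\to M_0\to a_1\to a_0$ (last arrow $f$), which is exact in $\mod\A$ at every term except possibly $a_0$; left $d$-exactness as a sequence in $\M$ is then checked directly from the facts that each $M_j\to\Omega_j$ is a right $\M$-approximation, that $\ker(M_j\to M_{j-1})=\Omega_{j+1}$, and that $\operatorname{im}(M_0\to a_1)=\ker f$. Axiom (A1)$^\op$ is dual, using a coresolution of $\coker f$. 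For (A2), if moreover $f$ is an epimorphism in $\M$ then it is an epimorphism in $\mod\A$ --- otherwise the composite of $a_0\twoheadrightarrow\coker_{\mod\A}(f)$ with an injective envelope $\coker_{\mod\A}(f)\hookrightarrow I$, where $I\in\M$ by \th\ref{char_tilting}, would be a nonzero morphism $a_0\to I$ in $\M$ annihilated by $f$, contradicting that $f$ is epi in $\M$ --- so the spliced sequence becomes $0\to M_{d-1}\to\cdots\to M_0\to a_1\to a_0\to0$, which is exact in $\mod\A$ with all terms in $\M$, hence $d$-exact in $\M$ by the first ingredient. Axiom (A2)$^\op$ is handled dually: a monomorphism in $\M$ is a monomorphism in $\mod\A$ (otherwise its kernel receives a nonzero map from a projective cover, which lies in $\M$ by \th\ref{char_tilting}), and one splices a coresolution of its cokernel.

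The main obstacle is the resolution lemma, and specifically the bookkeeping showing that the syzygy $\Omega_{d-1}$ of an arbitrary finitely presented $\A$-module already lies in $\M$; the remaining steps are either formal or direct applications of \th\ref{not_long}. A secondary technical nuisance is that in (A1) and (A1)$^\op$ the relevant sequence fails to be exact in $\mod\A$ at its last term, so \th\ref{not_long} does not apply there and left (resp. right) $d$-exactness must be verified by hand from the approximation properties.
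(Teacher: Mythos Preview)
The paper does not actually prove this statement: it is quoted as \cite[Thm.~3.16]{jasso_n-abelian_2014} and used as a black box (it supplies the implication \eqref{it:d-ct}$\Rightarrow$\eqref{it:d-abelian} in \th\ref{auslander_correspondence}). So there is no in-paper proof to compare against.

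That said, your argument is correct and is essentially the proof given in the cited reference. The two key ingredients you isolate---(i) that an exact sequence in $\mod\A$ with all terms in $\M$ is $d$-exact in $\M$, and (ii) that every object of $\mod\A$ admits an $\M$-resolution (and $\M$-coresolution) of length $d-1$---are exactly the tools used there. Your construction of the resolution via iterated right $\M$-approximations, together with the dimension-shifting argument showing $\Omega_{d-1}\in\M^{\perp_{d-1}}=\M$, is the standard one; the only remark is that the step ``$\Ext_\A^1(N,\Omega_{j+1})=0$ for all $j\ge0$'' already uses both the surjectivity of $\Hom_\A(N,M_j)\to\Hom_\A(N,\Omega_j)$ coming from the approximation property \emph{and} the vanishing $\Ext_\A^1(N,M_j)=0$, so your parenthetical caveat about $d\ge2$ is in the right place. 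Your handling of (A1) is also on target: since the spliced sequence is not exact at $a_0$, \th\ref{not_long} does not apply directly, and you correctly fall back on the short exact sequences $0\to\Omega_{j+1}\to M_j\to\Omega_j\to0$ together with $\Ext_\A^1(\M,\Omega_{j+1})=0$ to propagate exactness of $\Hom_\A(X,-)$ through the splice. The argument that epimorphisms (resp.\ monomorphisms) in $\M$ are already epimorphisms (resp.\ monomorphisms) in $\mod\A$, via injective envelopes (resp.\ projective covers) lying in $\M$, is likewise the standard one.
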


We need the following definition in the statement of
\th\ref{auslander_correspondence_h}.

\begin{definition}
  Let $\A$ be a $d$-abelian category. As usual, we say that an object
  $a\in\A$ is \emph{projective} if for every epimorphism
  $f\colon b\to c$ the induced morphism $\A(a,b)\to\A(a,c)$ is
  surjective.  We say that \emph{$\A$ has $d$-syzygies} if for every
  $a\in\A$ there exist a $d$-exact sequence
  \[
    \begin{tikzcd}[column sep=small]
      0\rar&b \rar&p_{d-1}\rar&\cdots\rar&p_0\rar&a\rar&0
    \end{tikzcd}
  \]
  where $p_0,p_1,\dots,p_{d-1}$ are projective objects in $\A$. With
  some abuse of notation, we sometimes denote $b$ by $\Omega^d a$. The
  notion of \emph{$\A$ having $d$-cosyzygies} and $\Omega^{-d}a$ are
  defined dually.
\end{definition}

We recall the following result.

\begin{theorem}
  \cite[Thm. 5.16]{jasso_n-abelian_2014}
  \th\label{d-ct_h-d-_abelian_h}.  Let $\A$ be an abelian category with
  enough projectives and $\M\subseteq\A$ a $d$-cluster-tilting
  subcategory. If $\Omega^d(\M)\subseteq\M$, then $\M$ is a
  $d$-abelian category with $d$-syzygies.
\end{theorem}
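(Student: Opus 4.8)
The plan is to treat the two conclusions separately: that $\M$ is a $d$-abelian category, and that $\M$ has $d$-syzygies. The first is exactly \cite[Thm. 3.16]{jasso_n-abelian_2014}, of which \th\ref{d-ct--d-abelian} is the special case of dualizing $\kk$-varieties; I would simply invoke it, so the content to be proved is the existence of $d$-syzygies. Along the way I would use two facts about $\M$. First, $\proj\A\subseteq\M$: if $P$ is a projective object of $\A$, then $\Ext_\A^k(P,-)=0$ for all $k\geq1$, hence $\Ext_\A^k(P,\M)=0$ for $1\leq k\leq d-1$ and so $P\in\lperp{d-1}\M=\M$ (compare \th\ref{char_tilting}). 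Second, the objects of $\proj\A$ are projective objects of the $d$-abelian category $\M$; this rests on the identification---part of the proof that $\M$ is $d$-abelian---of the $d$-exact sequences of $\M$ with the exact sequences of $\A$ whose terms all lie in $\M$, from which it follows that an epimorphism of $\M$ is an epimorphism of $\A$ and hence that $\M(P,-)=\A(P,-)$ carries epimorphisms of $\M$ to surjections whenever $P\in\proj\A\subseteq\M$.

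Now fix $a\in\M$. Since $\A$ has enough projectives, I would choose a projective resolution $\cdots\to P_1\to P_0\to a\to0$ in $\A$ and put $b:=\Omega^d a$, the $d$-th syzygy of $a$ along this resolution, so that there is an exact sequence
\[
  \begin{tikzcd}[column sep=small]
    0\rar&b\rar&P_{d-1}\rar&\cdots\rar&P_0\rar&a\rar&0
  \end{tikzcd}
\]
in $\A$ all of whose terms lie in $\M$: the middle terms by the first fact, the object $a$ by the hypothesis on $a$, and $b=\Omega^d a\in\Omega^d(\M)\subseteq\M$ by the hypothesis of the theorem. By the second fact, $P_0,\dots,P_{d-1}$ are projective in $\M$, so it remains only to check that this sequence is $d$-exact in $\M$. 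For left $d$-exactness I would apply $\Hom_\A(X,-)$ with $X\in\M$ and invoke \th\ref{not_long}, whose proof carries over to $d$-cluster-tilting subcategories of an abelian category with enough projectives; this yields the exact sequence $0\to\Hom_\A(X,b)\to\cdots\to\Hom_\A(X,P_0)\to\Hom_\A(X,a)\to\Ext_\A^d(X,b)$, whose exactness at the terms from $\Hom_\A(X,b)$ through $\Hom_\A(X,P_0)$ is precisely left $d$-exactness of the displayed sequence---the $\Ext_\A^d$-term only obstructs exactness at the terminal term $\Hom_\A(X,a)$, which left $d$-exactness does not require. Applying $\Hom_\A(-,X)$ and using the contravariant half of \th\ref{not_long} gives right $d$-exactness in the same way. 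Hence the displayed $d$-exact sequence exhibits a $d$-syzygy of $a$, and since $a\in\M$ was arbitrary, $\M$ has $d$-syzygies.

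The step I expect to be the main obstacle is the passage between the ambient exact structure of $\A$ and the internal $d$-abelian structure of $\M$. In one direction, an exact sequence of $\A$ with all terms in $\M$ is $d$-exact in $\M$; this is what powers the verification above, and it follows from \th\ref{not_long}. In the other direction, a $d$-exact sequence of $\M$ is exact in $\A$; this is what makes the projective objects of $\A$ projective in $\M$, and it is the more delicate point, belonging to the foundational theory that realizes $d$-cluster-tilting subcategories as $d$-abelian categories. Once both directions are in hand, the production of $d$-syzygies is nothing more than truncation of a projective resolution at homological degree $d$, together with the observation $\proj\A\subseteq\M$.
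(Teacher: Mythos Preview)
The paper does not give its own proof of this statement; it is simply quoted from \cite[Thm.~5.16]{jasso_n-abelian_2014} as an external result, so there is nothing in the paper to compare your argument against.

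That said, your argument is correct. The decomposition into (i) $\M$ is $d$-abelian and (ii) $\M$ has $d$-syzygies is the natural one, and for (ii) truncating a projective resolution at degree $d$ and invoking $\Omega^d(\M)\subseteq\M$ is exactly the right move. Your verification of $d$-exactness via \th\ref{not_long} is clean---and you are right that its proof goes through in any abelian category, not just over a dualizing $\kk$-variety---and you correctly observe that exactness at the terminal $\Hom$-term is not part of left (resp.\ right) $d$-exactness, so the $\Ext^d$-obstruction is harmless.

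The one point that deserves care, which you already flag, is that the objects of $\proj\A$ are projective in $\M$ in the $d$-abelian sense. Your route through ``epimorphisms in $\M$ are epimorphisms in $\A$'' is valid and follows from the identification of $d$-exact sequences in $\M$ with exact sequences in $\A$ all of whose terms lie in $\M$; this identification is indeed part of the content of \cite[Thm.~3.16]{jasso_n-abelian_2014}, so deferring to it is appropriate. An alternative, slightly more direct argument avoids (A2) altogether: since $\A$ has enough projectives and $\proj\A\subseteq\M$, any $c\in\M$ admits a surjection $P\twoheadrightarrow c$ with $P\in\proj\A\subseteq\M$; if $f\colon b\to c$ is an epimorphism in $\M$, this surjection factors through $f$ (else the cokernel of $f$ in $\A$ would receive a nonzero map from $P$, hence from some object of $\M$, contradicting that $f$ is epi in $\M$), so $f$ is already surjective in $\A$. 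Either way the conclusion stands.
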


\section{Auslander correspondence}
\label{sec:auslander_correspondence}

In this section we give a proof of
\th\ref{auslander_correspondence}. For readability purposes we divide
the proof in two parts. Note that the implication
\eqref{it:d-ct}$\Rightarrow$\eqref{it:d-abelian} is shown in
\th\ref{d-ct--d-abelian} since $d$-cluster-tilting subcategories are
functorially finite, hence dualizing $\kk$-varieties by
\th\ref{dkv_ff}.

\subsection{Proof of
  \eqref{it:d-abelian}$\Rightarrow$\eqref{it:new-condition} in
  \th\ref{auslander_correspondence}}

In this subsection, we fix a $d$-abelian dualizing $\kk$-variety
$\A$. By \th\ref{gldim}, the inequality $\gldim\A\leq d+1$ holds.
We begin with the following general lemma.

\begin{lemma}
  \th\label{epi_perp} Let $M\in\mod\A$ and
  \[
    \begin{tikzcd}[column sep=small]
      P_{a_1}\rar{P_f}&P_{a_0}\to M \to 0
    \end{tikzcd}
  \]
  a projective presentation of $M$. Then, $M\in\lperp{}\A$ if and only
  if $f$ is an epimorphism in $\A$.
\end{lemma}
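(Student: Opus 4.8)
The plan is to compute $\Hom_\A(M,P_a)$ for every $a\in\A$ by applying the left exact contravariant functor $\Hom_\A(-,P_a)$ to the given projective presentation and then to recognise the resulting kernel in terms of $f$ via Yoneda's lemma. Concretely, applying $\Hom_\A(-,P_a)$ to $P_{a_1}\xrightarrow{P_f}P_{a_0}\to M\to 0$ produces an exact sequence
\[
  \begin{tikzcd}[column sep=small]
    0\rar&\Hom_\A(M,P_a)\rar&\Hom_\A(P_{a_0},P_a)\rar&\Hom_\A(P_{a_1},P_a),
  \end{tikzcd}
\]
and under the Yoneda isomorphisms $\Hom_\A(P_{a_i},P_a)\cong\A(a_i,a)$ the right-hand map becomes precomposition with $f$, that is $\A(f,a)\colon\A(a_0,a)\to\A(a_1,a)$. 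Hence $\Hom_\A(M,P_a)\cong\ker\bigl(\A(f,a)\bigr)$ for every $a\in\A$.

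Next I would use that, since $\A_\A$ is the image of the Yoneda embedding, the condition $M\in\lperp{}\A$ (i.e. $\Hom_\A(M,\A_\A)=0$) is literally the assertion that $\Hom_\A(M,P_a)=0$ for all $a\in\A$. By the computation above this holds if and only if $\A(f,a)$ is injective for every $a\in\A$, which is precisely the statement that $f$ is an epimorphism in $\A$: a morphism in an additive category is epi if and only if it is right-cancellable, if and only if $\A(f,-)$ is a monomorphism of functors. This establishes both implications of the lemma at once.

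There is no genuine obstacle here; the argument is a direct application of Yoneda's lemma together with the elementary characterisation of epimorphisms in an additive category. The only points that merit a moment's care are the identification of the connecting map $\Hom_\A(P_{a_0},P_a)\to\Hom_\A(P_{a_1},P_a)$ with $\A(f,a)$, and the observation that it suffices to test against the representable projectives $P_a$ rather than all of $\A_\A$; both are immediate from the conventions fixed in the excerpt.
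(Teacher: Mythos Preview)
Your argument is correct and is essentially the same as the paper's proof: both identify $\Hom_\A(M,P_a)$ with the set of morphisms $g\colon a_0\to a$ killed by precomposition with $f$ via Yoneda, and then observe that this vanishes for all $a$ precisely when $f$ is an epimorphism. The paper phrases this via a factorisation diagram rather than writing out the left-exact sequence, but the content is identical.
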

\begin{proof}
  Let $g\colon a_0\to a$ be a morphism in $\A$. Then, $fg=0$ if and
  only if $P_fP_g=0$ if and only if there is a commutative diagram
  \[
    \begin{tikzcd}[row sep=small, column sep=small]
      P_{a_1}\rar{P_f}&P_{a_0}\ar{rr}{P_g}\drar[two heads]&&P_{a}\\
      &&M\urar[dotted,swap]
    \end{tikzcd}
  \]
  From this diagram, and since the Yoneda embedding is faithful, it is
  clear that $M\in\lperp{}\A$ if and only if $f$ is an epimorphism.
\end{proof}


\begin{proof}[Proof of
  \eqref{it:d-abelian}$\Rightarrow$\eqref{it:new-condition} in
  \th\ref{auslander_correspondence}]
  Let $M\in\lperp{}\A$ and
  \[
    \begin{tikzcd}[column sep=small]
      P_{a_1}\rar{P_f}&P_{a_0}\rar&M\rar&0
    \end{tikzcd}
  \]
  a projective presentation of $M$. By \th\ref{epi_perp} the morphism
  $f$ is an epimorphism. Since $\A$ is a $d$-abelian category, there
  exists a $d$-exact sequence
  \[
    \begin{tikzcd}[column sep=small]
      0\rar&a_{d+1}\rar&a_d\rar&\cdots\rar&a_1\rar{f}&a_0\rar&0.
    \end{tikzcd}
  \]
  By Yoneda's lemma, for each $a\in\A$ there is an isomorphism between
  the complex
  \begin{equation}
	\label{eq:ExtMA}
    \begin{tikzcd}[column sep=small]
      0\rar&{}_\A(P_{a_0},P_{a})\rar&
      {}_\A(P_{a_1},P_{a})\rar&\cdots\rar&{}_\A(P_{a_d},P_a)\rar&{}_\A(P_{a_{d+1}},P_{a})
    \end{tikzcd}
  \end{equation}
  and the acyclic complex
  \[
    \begin{tikzcd}[column sep=small]
      0\rar&\A(a_0,a)\rar&\A(a_1,a)\rar&\cdots\rar&\A(a_d,a)\rar&\A(a_{d+1},a).
    \end{tikzcd}
  \]
  Finally, since for each $k\in\set{0,\dots,d}$ the $\kk$-module
  $\Ext_\A^k(M,P_{a})$ is isomorphic to the homology of the complex
  \eqref{eq:ExtMA} at $\Hom_\A(P_{a_k},P_{a})$, we conclude that
  $M\in\lperp{d}\A$. Therefore $\lperp{}\A\subset\lperp{d}\A$. Dually,
  we have ${}^\perp \A_{\A^op}\subset {}^{\perp_d} \A_{\A^op}$.
\end{proof}

\subsection{Proof of
  \eqref{it:new-condition}$\Rightarrow$\eqref{it:d-auslander} in
  \th\ref{auslander_correspondence}}

\begin{proposition}
  The subcategory $\lperp{}\A$ is a Serre subcategory of $\mod\A$. In
  particular, $\lperp{}\A$ is an abelian category.
\end{proposition}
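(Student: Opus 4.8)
The plan is to show that $\lperp{}\A$ is closed under subobjects, quotients, and extensions in $\mod\A$. Extensions are immediate: if $0\to M'\to M\to M''\to 0$ is exact with $M',M''\in\lperp{}\A$, then applying $\Hom_\A(-,P_a)$ for $a\in\A$ and using left-exactness gives $\Hom_\A(M,P_a)=0$, so $M\in\lperp{}\A$. For quotients, if $M\in\lperp{}\A$ and $M\twoheadrightarrow M''$, then $\Hom_\A(M'',P_a)\hookrightarrow\Hom_\A(M,P_a)=0$, so $M''\in\lperp{}\A$. The only nontrivial part is closure under subobjects.

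For subobjects I would argue as follows. Let $M\in\lperp{}\A$ and let $N\subseteq M$ be a subobject, with quotient $M/N$; we must show $\Hom_\A(N,P_a)=0$ for all $a\in\A$, equivalently $N\in\lperp{}\A$. By the hypothesis ${}^\perp\A_\A\subseteq{}^{\perp_d}\A_\A$ already established in the previous subsection, $M\in\lperp{d}\A$, so in particular $\Ext_\A^1(M,P_a)=0$. The long exact sequence obtained by applying $\Hom_\A(-,P_a)$ to $0\to N\to M\to M/N\to 0$ reads
\[
  0\to\Hom_\A(M/N,P_a)\to\Hom_\A(M,P_a)\to\Hom_\A(N,P_a)\to\Ext_\A^1(M/N,P_a)\to\Ext_\A^1(M,P_a)\to\cdots.
\]
Since $\Hom_\A(M,P_a)=0$, this already forces $\Hom_\A(M/N,P_a)=0$, i.e.\ $M/N\in\lperp{}\A$; and then $\Hom_\A(N,P_a)$ injects into $\Ext_\A^1(M/N,P_a)$. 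So it suffices to see $\Ext_\A^1(M/N,P_a)=0$, i.e.\ $M/N\in\lperp{2}\A$ (when $d\geq 2$); but we only know $M/N\in\lperp{}\A$ at this stage, not $M/N\in\lperp{d}\A$, so this looks circular. The fix is to use \th\ref{epi_perp}: membership in $\lperp{}\A$ is characterized by the presenting map $f$ being an epimorphism \emph{in $\A$}, and the $d$-abelian structure then lets one replace a $2$-term projective presentation of such a module by a genuine $d$-exact sequence in $\A$ (as in the proof of \eqref{it:d-abelian}$\Rightarrow$\eqref{it:new-condition}), which upgrades $\lperp{}\A$-membership to $\lperp{d}\A$-membership directly. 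Concretely: once $M/N\in\lperp{}\A$, by \th\ref{epi_perp} and $d$-abelianness $M/N\in\lperp{d}\A$, hence $\Ext_\A^1(M/N,P_a)=0$, hence $\Hom_\A(N,P_a)=0$, hence $N\in\lperp{}\A$.

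The main obstacle is precisely this apparent circularity in the subobject case, and the resolution is to observe that the implication \eqref{it:d-abelian}$\Rightarrow$(${}^\perp\A_\A\subseteq{}^{\perp_d}\A_\A$) proved in the preceding subsection is available here: it upgrades any module known to lie in $\lperp{}\A$ to one lying in $\lperp{d}\A$, and once the higher Ext-vanishing against projectives is in hand the long exact sequence does the rest. (Dually one could instead run the whole argument by applying $D$ and using ${}^\perp\A_{\A^\op}\subseteq{}^{\perp_d}\A_{\A^\op}$, but it is cleaner to keep everything on the $\A$ side.) The final sentence, that a Serre subcategory of an abelian category is abelian, is standard and needs no further comment.
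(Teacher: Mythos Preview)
Your argument is correct and coincides with the paper's own proof: closure under quotients and extensions is immediate, and for subobjects one uses that the quotient $M/N$ lies in $\lperp{}\A$ (closure under quotients), hence in $\lperp{d}\A$ by the standing hypothesis ${}^\perp\A_\A\subseteq{}^{\perp_d}\A_\A$, so $\Ext_\A^1(M/N,P_a)=0$ and the long exact sequence gives $\Hom_\A(N,P_a)=0$. The ``circularity'' you worry about is not actually present---the detour through $\Ext_\A^1(M,P_a)$ is unnecessary, and your notational aside ``$M/N\in\lperp{2}\A$'' should read $\lperp{1}\A$---but the substance is exactly the paper's argument.
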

\begin{proof}
  It is clear that $\lperp{}\A$ is closed under extensions and
  quotients. Let $0\to L\to M\to N\to0$ be a short exact sequence in
  $\mod\A$ such that $M\in\lperp{}\A$. Condition \eqref{it:new-condition} implies that
  $N\in\lperp{}\A\subseteq\lperp{d}\A$. Hence for each $a\in\A$ the
  functor $\Hom_\A(-,P_{a})$ induces an exact sequence
  \[
    \begin{tikzcd}[column sep=small]
      0=\Hom_\A(M,P_{a})\rar&\Hom_\A(L,P_{a})\rar&\Ext^1_\A(N,P_{a})=0.
    \end{tikzcd}
  \]
  Therefore $L\in\lperp{}{\A}$, which is what we needed to show.
\end{proof}

The following result is necessary to establish a certain Gorenstein
property of $\A$ in \th\ref{gorenstein_prop}.

\begin{lemma}
  \th\label{ExtDuality} The contravariant functor $\mod\A\to\mod\A^\op$
  given by
  \[
    \begin{tikzcd}
      M\mapsto \Ext_\A^{d+1}(M,-)|_{\A}
    \end{tikzcd}
  \]
  induces a duality between abelian categories
  $\lperp{}{\A_\A}\to\lperp{}{(\A_{\A^\op})}$.
\end{lemma}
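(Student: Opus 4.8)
The plan is to show that $M\mapsto \Ext_\A^{d+1}(M,-)|_\A$ restricts to a well-defined functor $\lperp{}\A_\A\to\lperp{}(\A_{\A^\op})$ and that this functor is a quasi-inverse to the analogous functor in the opposite direction, using the Auslander--Bridger machinery. First I would observe that for $M\in\lperp{}\A_\A$, the previous subsection gives $M\in\lperp{d}\A_\A$, so $\Ext^k_\A(M,-)|_\A=0$ for $k\in\set{1,\dots,d}$; combined with $\gldim\A\le d+1$ this means the only possibly-nonzero cohomology of the $\A$-dual of a projective resolution of $M$ sits in degrees $0$ and $d+1$. Since $M\in\lperp{}\A$ forces $M^*=\Hom_\A(M,-)|_\A=0$ as well, the $\A$-duality applied to a projective resolution $P_\bul\to M$ yields that $\Ext^{d+1}_\A(M,-)|_\A=\Tr_{d+1}M$ up to the lower $\Tr_k$'s collapsing. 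More precisely, I would invoke \th\ref{technical_lemma}: the exact sequences $0\to\Ext^k_\A(M,-)|_\A\to\Tr_kM\to\Omega\Tr_{k+1}M\to0$ with $\Ext^k_\A(M,-)|_\A=0$ for $k=1,\dots,d$ give isomorphisms $\Tr_kM\cong\Omega\Tr_{k+1}M$ in $\smod\A^\op$ for those $k$, and for $k=d+1$ the sequence reads $0\to\Ext^{d+1}_\A(M,-)|_\A\to\Tr_{d+1}M\to\Omega\Tr_{d+2}M\to0$; since $\gldim\A\le d+1$ implies $\Tr_{d+2}M$ is projective, $\Ext^{d+1}_\A(M,-)|_\A\cong\Tr_{d+1}M$ in $\smod\A^\op$.

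Next I would check that $\Ext^{d+1}_\A(M,-)|_\A$ actually lies in $\lperp{}(\A_{\A^\op})$, i.e. that $\Hom_{\A^\op}(\Ext^{d+1}_\A(M,-)|_\A, P_a^*)=0$ for all $a\in\A$. This is where the hypothesis $\varphi^*=0$ in \th\ref{technical_lemma} and \th\ref{tor_homext} enter. Using \th\ref{tor_homext} with an injective $\A^\op$-module $I$, $\Tor^\A_{d+1}(M,I)\cong\Hom_{\A^\op}(\Ext^{d+1}_\A(M,-)|_\A,I)$; but there are enough such $I$ (the category $\mod\A^\op$ has enough injectives, and every projective $P_a^*$ embeds into one), so it suffices to show $\Tor^\A_k(M,I)=0$ for the relevant range, or more directly to argue via the Auslander--Bridger sequence \th\ref{auslander-bridger_sequence} that the transpose of a module in $\lperp{}\A$ has no maps to projectives. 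Concretely, $M\in\lperp{}\A$ means $M$ has a projective presentation $P_{a_1}\xrightarrow{P_f} P_{a_0}\to M\to 0$ with $f$ epi (\th\ref{epi_perp}), so the $\A$-dual $0\to M^*=0\to P_{a_0}^*\to P_{a_1}^*\to\Tr M\to 0$ shows $\Tr M$ has projective dimension $\le1$ as presented, and iterating with the fact that all the intermediate $\Ext$'s vanish, I get that $\Tr_{d+1}M\cong D(\text{something in }\lperp{}(D\A))$—I would instead run the cleanest route: apply $D$ to everything and use that $D$ exchanges $\lperp{}\A_\A$ with $(D\A_\A)^\perp$ type conditions, reducing the perpendicularity claim to a projectivity/injectivity bookkeeping statement.

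Then I would produce the quasi-inverse: symmetrically, the functor $N\mapsto\Ext^{d+1}_{\A^\op}(N,-)|_{\A^\op}$ sends $\lperp{}(\A_{\A^\op})$ to $\lperp{}\A_\A$, and I would show the two composites are naturally isomorphic to the respective identity functors. For $M\in\lperp{}\A_\A$, writing $T=\Ext^{d+1}_\A(M,-)|_\A\cong\Tr_{d+1}M$, I must identify $\Ext^{d+1}_{\A^\op}(T,-)|_{\A^\op}$ with $M$. Here I would use the Auslander--Bridger sequence for $\Omega^d M$ (or $M$ shifted appropriately): \th\ref{auslander-bridger_sequence} gives $0\to\Ext^1_{\A^\op}(\Tr M,-)|_{\A^\op}\to M\to M^{**}\to\Ext^2_{\A^\op}(\Tr M,-)|_{\A^\op}\to0$, and the analogous sequences for the higher $\Tr_k$'s, together with the vanishing $M^*=0$ (which gives $M^{**}$-related terms vanishing or $M$ being a ``torsion'' object with respect to the double-dual), should pin down $\Ext^{d+1}_{\A^\op}(\Tr_{d+1}M,-)|_{\A^\op}\cong M$; the naturality is automatic from the functoriality of $\Tr$ and of the Auslander--Bridger sequences. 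I expect the main obstacle to be precisely this last identification—keeping careful track of which $\Tr_k$ and which syzygy shift appears, and checking that the ``error terms'' $\Ext^j$ all vanish because the modules involved lie in $\lperp{}\A$ (equivalently have projective presentations with epi maps), so that the $(d+1)$-fold transpose is a genuine duality rather than merely a stable duality; the finiteness and the existence of minimal projective resolutions (\th\ref{minimal_proj_res}) together with $\gldim\A\le d+1$ are what make $\Tr_{d+1}$ well-behaved enough for this to go through.
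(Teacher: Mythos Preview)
The paper's own proof is a one-line deferral to \cite[6.2(1)]{iyama_tau-categories_2005}, so there is no detailed argument to compare against. Your overall strategy is correct: use $M^*=0$ together with $\lperp{}\A\subset\lperp{d}\A$ and $\gldim\A\le d+1$ to control the $\A$-dual of a projective resolution, then show the analogous functor in the other direction is a quasi-inverse. However, the middle of your proposal---the detour through \th\ref{technical_lemma}, \th\ref{tor_homext}, and the Auslander--Bridger sequence---is both unnecessary and, as written, not quite complete (your Tor argument for $T\in\lperp{}(\A_{\A^\op})$ needs $\Hom_{\A^\op}(T,I)=0$ for injective $I$, but you never actually establish the required $\Tor$-vanishing, and you visibly hesitate and switch routes mid-paragraph).

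The direct argument, which is almost certainly what the cited reference does, is already implicit in your first paragraph and avoids all of this. Take a projective resolution $0\to P_{d+1}\to\cdots\to P_0\to M\to 0$. Since $M^*=0$ and $\Ext^k_\A(M,-)|_\A=0$ for $1\le k\le d$, the $\A$-dual complex
\[
0\to P_0^*\to P_1^*\to\cdots\to P_{d+1}^*\to T\to 0,\qquad T:=\Ext^{d+1}_\A(M,-)|_\A,
\]
is \emph{exact}: it is a projective resolution of $T$ in $\mod(\A^\op)$. Apply $(-)^*$ once more and use $P_i^{**}\cong P_i$: you recover the original acyclic complex $0\to P_{d+1}\to\cdots\to P_0\to M\to 0$, whence $T^*=\ker(P_{d+1}\to P_d)=0$ (so $T\in\lperp{}(\A_{\A^\op})$) and $\Ext^{d+1}_{\A^\op}(T,-)|_{\A^\op}=\coker(P_1\to P_0)=M$. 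Symmetry gives the other composite, and functoriality is automatic since $\Ext^{d+1}$ is a functor. This replaces your entire second and third paragraphs.
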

\begin{proof}
  This can be shown as in the proof of
  \cite[6.2(1)]{iyama_tau-categories_2005} since
  $\lperp{}\A\subset\lperp{d}{\A}$ by
  condition \eqref{it:new-condition}. We leave the details to the reader.
\end{proof}

We can now establish the following $(d+1)$-Gorenstein property of
$\A$, \cf \cite[Def. 0.1]{iyama_tau-categories_2005}.

\begin{proposition}
  \th\label{gorenstein_prop} Let $S$ be a simple $\A$-module of
  projective dimension $d+1$. Then, the following statements hold.
  \begin{enumerate}
  \item The $\A$-module $S$ belongs to $\lperp{}{\A}$.
  \item The $\A^\op$-module $\Ext_\A^{d+1}(S,-)|_{\A}$ is simple and
    has projective dimension $d+1$.
  \end{enumerate}
\end{proposition}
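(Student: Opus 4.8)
The plan is to prove both statements simultaneously by exploiting the duality from \th\ref{ExtDuality}. First I would observe that a simple $\A$-module $S$ of projective dimension $d+1$ must lie in $\lperp{}\A$: indeed, if $S\notin\lperp{}\A$ then by \th\ref{epi_perp} the first syzygy $\Omega S$ lies in $\A_\A$ (is projective), so $S\in\Omega^{-1}(\A_\A)$ would have projective dimension at most $1\le d$ unless $S$ itself is projective — in either case contradicting $\pdim S = d+1 \ge 2$. More carefully, one uses that $S$ is simple together with \th\ref{minimal_proj_res}: the minimal projective presentation $P_{a_1}\xrightarrow{P_f}P_{a_0}\to S\to 0$ has $P_{a_0}$ a projective cover, so $P_{a_0}$ is indecomposable; if $f$ were not an epimorphism in $\A$, then $S\notin\lperp{}\A$ and one analyzes the minimal resolution to force $\pdim S\le 1$. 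Hence statement (a) holds: $S\in\lperp{}\A$.

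Next, for statement (b), since $S\in\lperp{}\A$ and $\pdim S = d+1 = \gldim\A$ (using $\gldim\A\le d+1$ from condition \eqref{it:new-condition}), I would apply the duality $\Ext_\A^{d+1}(-,-)|_\A\colon \lperp{}{\A_\A}\to\lperp{}{(\A_{\A^\op})}$ of \th\ref{ExtDuality} to the $\A$-module $S$. The key point is that this is an \emph{exact} anti-equivalence between the two abelian categories $\lperp{}\A$ and $\lperp{}(\A_{\A^\op})$, so it sends the simple object $S$ to a simple object $S' := \Ext_\A^{d+1}(S,-)|_\A$ of $\lperp{}(\A_{\A^\op})$ (a duality reverses the lattice of subobjects, hence preserves simplicity — an object is simple iff it is nonzero with no proper nonzero subobjects, and $\lperp{}\A$ is a Serre subcategory so "simple in $\lperp{}\A$" coincides with "simple in $\mod\A$"). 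It remains to check that $S'$ has projective dimension exactly $d+1$ as an $\A^\op$-module. For this I would use that $\Ext_{\A^\op}^{d+1}(S',-)|_{\A^\op}\cong S$ (the duality is an involution up to the natural identification, by the same argument as in \cite[6.2(1)]{iyama_tau-categories_2005}), so $\Ext_{\A^\op}^{d+1}(S',-)|_{\A^\op}\ne 0$, forcing $\pdim S'\ge d+1$; combined with $\gldim\A^\op = \gldim\A\le d+1$ this gives $\pdim S' = d+1$.

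The main obstacle I anticipate is verifying carefully that the functor of \th\ref{ExtDuality} really does restrict to a bijection on \emph{simple} objects compatibly with the double-duality identification — that is, that $\Ext_{\A^\op}^{d+1}(\Ext_\A^{d+1}(S,-)|_\A,-)|_{\A^\op}\cong S$ naturally. This requires unwinding the Auslander--Bridger machinery (\th\ref{auslander-bridger_sequence} and \th\ref{technical_lemma}): for $M\in\lperp{}\A\subset\lperp{d}\A$, the sequence of \th\ref{technical_lemma} with $k=d+1$, together with the vanishing $\Ext_\A^j(M,\A)=0$ for $1\le j\le d$, collapses to give $\Tr_{d+1}M\cong \Ext_\A^{d+1}(M,-)|_\A$ up to projectives, so the statement reduces to the classical biduality $\Tr\Omega^d\Tr\Omega^d M\cong M$ in $\smod\A$ for such $M$. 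Once that identification is in place, simplicity of $S$ transfers to simplicity of $S'$ because the duality reflects the (zero) socle/length-one condition. An alternative, more self-contained route for (b) would be to copy the argument of \cite[6.2(1)]{iyama_tau-categories_2005} verbatim, checking that every step goes through in the dualizing $\kk$-variety setting, using that $\mod\A$ has projective covers and injective envelopes (\th\ref{minimal_proj_res}) in place of the corresponding facts for Artin algebras.
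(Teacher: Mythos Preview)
Your argument for (a) has a gap. You invoke \th\ref{epi_perp} to claim that $S\notin\lperp{}\A$ forces $\Omega S$ to be projective, but \th\ref{epi_perp} says only that $M\in\lperp{}\A$ if and only if the presentation map $f\colon a_1\to a_0$ is an epimorphism \emph{in $\A$}; thus $S\notin\lperp{}\A$ merely tells you that $f$ is \emph{not} an epimorphism in $\A$, which gives no control over $\Omega S$. Your fallback ``one analyzes the minimal resolution to force $\pdim S\le 1$'' is unsupported, and the bound $1$ has no evident source. The paper's argument is direct and bypasses \th\ref{epi_perp} entirely: if $S\notin\lperp{}\A$ then there is a nonzero morphism $S\to P_a$ for some $a\in\A$, and since $S$ is simple this morphism is a monomorphism; but $\gldim\A\le d+1$ forces every submodule of a projective $\A$-module (being a first syzygy) to have projective dimension at most $d$, contradicting $\pdim S=d+1$.

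Your argument for (b) is essentially correct and close to the paper's: both invoke \th\ref{ExtDuality} to conclude that $S':=\Ext_\A^{d+1}(S,-)|_\A$ is simple and lies in $\lperp{}(\A_{\A^\op})$. For $\pdim S'=d+1$, the paper argues directly from $S'\in\lperp{}(\A_{\A^\op})\subset\lperp{d}(\A_{\A^\op})$ (a nonzero module with $\Ext^k(-,\A)=0$ for all $0\le k\le d$ cannot have projective dimension $\le d$, since the $\A$-dual of a minimal projective resolution would then be a bounded acyclic complex of projectives, hence contractible), whereas you go via the double-dual identification $\Ext_{\A^\op}^{d+1}(S',-)|_{\A^\op}\cong S\neq 0$. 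Both routes are valid; yours is arguably more transparent but, as you note, relies on making the quasi-inverse in \th\ref{ExtDuality} explicit.
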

\begin{proof}
  Since $\gldim\A\leq d+1$, every submodule of $\A$ has projective
  dimension at most $d$. Therefore
  $S\in\lperp{}\A$. \th\ref{ExtDuality} implies that
  $\Ext_\A^{d+1}(S,-)|_{\A}$ is a simple $\A^\op$-module which belongs to ${}^\perp \A_{\A^op}\subset {}^{\perp_d} \A_{\A^\op}$,
and therefore has projective dimension $d+1$.
\end{proof}

We denote by $I^0(\A)$ the additive closure of the full subcategory of
$\mod\A$ given by the injective $\A$-modules $I$ such that there exist
$a\in\A$ and an injective envelope $P_a\to I$.

\begin{lemma}
  \th\label{I0Ik} Let $a\in\A$ and
  \[
    \begin{tikzcd}[column sep=small]
      (P_a\to
      I^\bul)=(0\rar&P_a\rar&I^0\rar&I^1\rar&\cdots\rar&I^d\rar&I^{d+1}\rar&0)
    \end{tikzcd}
  \]
  a minimal injective coresolution of $P_a$. Then, the following
  statements hold.
  \begin{enumerate}
  \item\label{it:I0Ik} For all $k\in\set{0,1,\dots,d}$ the $\A$-module
    $I^k$ belongs to $I^0(\A)$.
  \item\label{it:IkId1} The equality
    $I^0(\A)\cap(\add I^{d+1})=0$ is satisfied.
  \end{enumerate}
\end{lemma}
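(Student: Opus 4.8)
The plan is to prove both statements by exploiting the $(d+1)$-Gorenstein property established in \th\ref{gorenstein_prop} together with the structure of minimal injective coresolutions.

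For part \eqref{it:I0Ik}, the key observation is that the condition $\domdim\A\ge d+1$ is not yet available (indeed it is what we are building towards), so I cannot simply invoke it. Instead I would argue by induction on $k$, using that $I^0(\A)$ is, by \th\ref{lemma_DA} and the definition of dualizing $\kk$-variety, closed under the relevant operations. The precise mechanism: for $k$ in the range $\set{0,\dots,d}$ the cosyzygy $\Omega^{-k}(P_a)$ is (up to projectives) the image of $I^{k-1}\to I^k$, and since $\gldim\A\le d+1$ every such cosyzygy has the property that its own minimal injective envelope is projective provided one controls the simple modules appearing in its socle. Here is where \th\ref{gorenstein_prop}(a) enters: a simple module $S$ can fail to have a projective injective envelope only if $S$ has projective dimension exactly $d+1$; but by \th\ref{gorenstein_prop}(a) such an $S$ lies in $\lperp{}\A$, and a module in $\lperp{}\A$ cannot be a submodule (in particular a socle constituent) of any $\Omega^{-k}(P_a)$ for $k\le d$ because $\Omega^{-k}(P_a)$ embeds into a projective-injective module $I^k$ — wait, that is circular. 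So instead I would phrase it as: suppose inductively $I^0,\dots,I^{k-1}$ are projective; then $\Omega^{-k}(P_a)$ has projective dimension $\le d+1-k$ hmm, actually $\le \max(0, d+1)$ is not sharp enough. Let me restructure: the cleanest route is to show that $\Hom_\A$ from any simple $S$ of projective dimension $d+1$ into $I^k$ vanishes for $k\le d$, equivalently $\soc(I^k)$ has no such constituent, and this follows because $\Ext^{k}_\A(S, P_a)$ sits in the minimal coresolution and vanishes for $k\le d$ precisely because $S\in\lperp{}\A\subseteq\lperp{d}\A$ (using condition \eqref{it:new-condition}). Then the injective envelope of $\Omega^{-k}(P_a)$ involves only simples of projective dimension $\le d$, each of which has projective injective envelope by \th\ref{gorenstein_prop} applied in the contrapositive direction together with \th\ref{minimal_proj_res}.

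For part \eqref{it:IkId1}, I would argue that any common summand $J$ of $I^0(\A)$ and $\add I^{d+1}$ must be both projective (being in $I^0(\A)$) and a summand of $I^{d+1}$ in a \emph{minimal} injective coresolution. Minimality forces $\soc(I^{d+1})$ to consist of simples $S$ with $\Ext^{d+1}_\A(S,P_a)\ne 0$, hence of projective dimension exactly $d+1$; by \th\ref{gorenstein_prop}(a) each such $S$ lies in $\lperp{}\A$. But if $J$ is projective, say $J=P_b$ up to the $\A$-duality, then $\soc(J)$ is a simple $\A$-module which, being the socle of a projective, satisfies $\Hom_\A(\soc J, \A)\ne 0$ — more precisely one shows $\soc(P_b)\notin\lperp{}\A$ by producing a nonzero map to some $P_c$ — contradicting $\soc(J)\subseteq\soc(I^{d+1})\subseteq\lperp{}\A$. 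Hence $J=0$.

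The main obstacle I anticipate is the bookkeeping in part \eqref{it:I0Ik}: one must carefully separate the role of condition \eqref{it:new-condition} (which controls $\Ext^k_\A(S,-)|_\A$ for $k\le d$ when $S\in\lperp{}\A$) from the role of $\gldim\A\le d+1$, and one must verify that ``the injective envelope of a module whose socle has only projective dimension $\le d$ constituents is projective'' — this last implication is essentially the statement that each simple of projective dimension $\le d$ has a projective injective envelope, which is \emph{not} automatic and requires \th\ref{gorenstein_prop} in the form: if $S$ is simple with $\idim$-relevant data... Actually the honest difficulty is that part (a) as I have sketched it seems to need precisely the conclusion $\domdim\A\ge d+1$, so the real content of this lemma is bootstrapping: one proves \eqref{it:I0Ik} for each $k$ by induction, and at step $k$ one only needs that simples of projective dimension $d+1$ are invisible to $\soc(I^k)$, which is a strictly weaker (and available) input. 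I expect the induction to go through once this is set up correctly, with \eqref{it:IkId1} then being the comparatively short socle argument above.
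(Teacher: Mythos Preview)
There is a conceptual confusion running through your proposal: $I^0(\A)$ is \emph{not} the class of projective-injective $\A$-modules, but the additive closure of injective envelopes of the representable projectives $P_a$. Membership in $I^0(\A)$ says nothing about projectivity a priori --- that is precisely what the subsequent \th\ref{d1d1-simple-condition} and \th\ref{OmegaPd} establish, eventually yielding $\domdim\A\ge d+1$. This affects your argument for part \eqref{it:IkId1}, where you write ``$J$ is projective (being in $I^0(\A)$)''; the correct statement is that $\soc J$ embeds in some $P_{a'}$, and once you say this your socle argument goes through exactly as in the paper.

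The more serious gap is in part \eqref{it:I0Ik}. Your terminal step --- that a simple $S$ with $\pdim S\le d$ has ``projective injective envelope by \th\ref{gorenstein_prop} applied in the contrapositive direction'' --- is not justified. The contrapositive of \th\ref{gorenstein_prop}(a) reads $S\notin\lperp{}\A\Rightarrow\pdim S\neq d+1$, which is the wrong implication; you need its \emph{converse}, and that is not what \th\ref{gorenstein_prop} provides. The paper's proof sidesteps this detour entirely and does not invoke \th\ref{gorenstein_prop} for part \eqref{it:I0Ik} at all: if $S$ is simple in $\soc I^k$ with $k\le d$, minimality gives $\Ext_\A^k(S,P_a)\neq0$, so $S\notin\lperp{d}\A$, and then condition \eqref{it:new-condition} \emph{directly} yields $S\notin\lperp{}\A$. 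Hence $\Hom_\A(S,P_{a'})\neq0$ for some $a'$, and the injective envelope of $S$ is a summand of that of $P_{a'}$, i.e.\ lies in $I^0(\A)$. No induction on $k$, no bootstrapping, and no analysis of $\pdim S$ are needed --- the several circularities you flagged yourself all evaporate once you argue straight from $\Ext^k\neq0$ to $\Hom\neq0$ via \eqref{it:new-condition}.
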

\begin{proof}
  \eqref{it:I0Ik} Let $k\in\set{0,1,\dots,d}$ and $I$ an
  indecomposable direct summand of $I^k$. Let $S$ be the socle of
  $I$. Since $S$ is a simple $\A$-module and $P_a\to I^\bul$ is a
  minimal injective coresolution, there is an isomorphism
  $\Ext_\A^k(S,P_a)\cong\Hom_\A(S,I^k)\neq0$. Then, condition \eqref{it:new-condition}
  implies that $S\notin\lperp{}\A$. Hence, there exists $a'\in\A$ such
  that $\Hom_\A(S,P_{a'})\neq0$. Therefore $I$ is a direct summand of
  the injective envelope of $P_{a'}$. This shows that
  $I\in\add I^0(\A)$.
  
  \eqref{it:IkId1} 
  It is enough to show that for each simple submodule $S$ of $I^{d+1}$
  there is an equality $\Hom_\A(S,I^0(\A))=0$. Since
  $\Ext_\A^{d+1}(S,P_a)\neq0$, the $\A$-module $S$ has projective
  dimension $d+1$. Given that every submodule of a projective
  $\A$-module has projective dimension at most $d$, we deduce that
  $S\in\lperp{}\A$. Therefore $\Hom_\A(S,\A)=\Hom_\A(S,I^0(\A))=0$ as
  required.
\end{proof}

We need one more technical lemma.

\begin{lemma}
  \th\label{d1d1-simple-condition} Let $a\in\A$ and
  \[
    \begin{tikzcd}[column sep=small]
      (P_a\to
      I^\bul)=(0\rar&P_a\rar&I^0\rar&I^1\rar&\cdots\rar&I^d\rar&I^{d+1}\rar&0)
    \end{tikzcd}
  \]
  a minimal injective coresolution of $P_a$. Then, for each
  $k\in\set{0,1,\dots,d}$ the inequality $\pdim I^k\leq d$ is satisfied.
\end{lemma}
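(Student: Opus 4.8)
We fix $a\in\A$ with minimal injective coresolution $P_a\to I^\bul$ as in the statement, and $k\in\set{0,1,\dots,d}$. By \th\ref{I0Ik}\eqref{it:I0Ik} we know $I^k\in I^0(\A)$, so $I^k$ is the injective envelope of some projective $\A$-module; equivalently there is a monomorphism $P_b\to I^k$ for some $b\in\A$ which is an injective envelope. The plan is to resolve $I^k$ in $\mod\A$ and bound the projective dimension by pushing the minimal injective coresolution of $P_b$ through the cokernel. Concretely, writing $0\to P_b\to I^k\to C\to0$ and using $\gldim\A\le d+1$ together with condition \eqref{it:new-condition}, one obtains $\pdim C\le d$ and hence $\pdim I^k\le d$.

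**Key steps.**
First I would note that since $I^k$ is an injective envelope of a projective $P_b$, the cokernel $C:=I^k/P_b$ has no projective summands in its socle; more precisely, I claim $C\in\lperp{}\A$. To see this, since the inclusion $P_b\hookrightarrow I^k$ is an injective envelope and $I^k\in I^0(\A)$, every simple submodule $S$ of $C$ lifts to a nonzero $\Ext^1_\A(S,P_b)$-class only if $\Hom_\A(S,P_b)=0$ is forced — one argues via the long exact sequence that any $S\le\operatorname{soc}C$ has $\Hom_\A(S,\A)=0$ because otherwise $S$ would embed in $\operatorname{soc}I^k=\operatorname{soc}P_b$, contradicting that $C=I^k/P_b$. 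Thus $C\in\lperp{}\A$, and condition \eqref{it:new-condition} gives $C\in\lperp{d}\A$. Then from the short exact sequence $0\to P_b\to I^k\to C\to0$ and $\gldim\A\le d+1$ we get $\pdim C\le d+1$; but $C\in\lperp{d}\A$ means $\Ext^j_\A(C,\A_\A)=0$ for $1\le j\le d-1$, and combined with $\pdim C\le d+1$ this does not yet give the bound directly — so instead I would use the following: for any $\A$-module $M$ with $\pdim M=d+1$, the top of a minimal projective resolution yields a simple module of projective dimension $d+1$, which by \th\ref{gorenstein_prop} lies in $\lperp{}\A$, contradicting $C\in\lperp{}\A$ unless... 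Here I must be careful. The cleaner route: $\pdim I^k\le d$ iff $\Ext^{d+1}_\A(I^k,\A_\A)=0$ (since $\gldim\A\le d+1$). Now apply $\Hom_\A(-,P_a)$-type arguments: from $0\to P_b\to I^k\to C\to 0$ one gets $\Ext^{d+1}_\A(I^k,\A)\cong\Ext^{d+1}_\A(C,\A)$ (as $\Ext^{d+1}_\A(P_b,\A)=0=\Ext^{d}_\A(P_b,\A)$). And $\Ext^{d+1}_\A(C,\A_\A)$ — since $C\in\lperp{}\A\subseteq\lperp{d}\A$, by \th\ref{ExtDuality} the functor $\Ext^{d+1}_\A(-,\A)|_\A$ sends $C$ to an object of $\lperp{}(\A_{\A^\op})$; but if additionally $\Ext^{d+1}_\A(C,\A)=0$ we are done, and if not, it is a nonzero module in $\lperp{}(\A_{\A^\op})$ whose preimage is $C$. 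So actually the statement $\pdim I^k\le d$ is equivalent to $\Ext^{d+1}_\A(I^k,\A_\A)=0$, and I need a genuinely extra input to kill this group.

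**The real argument.**
The extra input is \th\ref{I0Ik}\eqref{it:IkId1}: $I^0(\A)\cap\add I^{d+1}=0$. Suppose for contradiction that $\pdim I^k=d+1$ for some $k\le d$. Take the socle $S$ of an indecomposable summand realizing this; then $S$ is simple of projective dimension $d+1$ (every proper submodule of a projective has $\pdim\le d$, and one locates $S$ inside $I^k$ which embeds its socle into $\operatorname{soc}P_b$, so $\pdim S\le d$) — wait, this shows $\pdim S\le d$, hence $S\in\lperp{}\A$ fails to help. The correct contradiction: if $\pdim I^k=d+1$ then in the minimal injective coresolution of $P_a$, applying $D$ and the transpose as in \th\ref{auslander-bridger_sequence} and \th\ref{technical_lemma}, the module $\Ext^{d+1}_\A(I^k,-)|_\A$ is a nonzero $\A^\op$-module in $\lperp{}(\A_{\A^\op})$, so dually (by the $\A^\op$-version of \th\ref{I0Ik}) its minimal injective coresolution forces a summand of $I^{d+1}$ to lie in $I^0(\A)$, contradicting \eqref{it:IkId1}. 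The main obstacle, and where I expect to spend the most care, is making this last duality argument precise: tracking how $\Ext^{d+1}_\A(-,\A)$ interchanges the injective coresolution of $P_a$ on the $\A$-side with a projective resolution on the $\A^\op$-side, and verifying that a nonzero value at $I^k$ (for $k\le d$) is incompatible with the separation $I^0(\A)\cap\add I^{d+1}=0$. Concretely I would compute $\Ext^{d+1}_\A(I^k,P_{a'})$ using the minimal injective coresolution $P_{a'}\to J^\bullet$ of each $P_{a'}$, note that such an Ext group contributes to $J^{d+1}$ having a summand in $I^0(\A)$, and invoke \th\ref{I0Ik}\eqref{it:IkId1} to conclude it vanishes, whence $\pdim I^k\le d$.
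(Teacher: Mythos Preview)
Your proposal cycles through several approaches and lands on an argument that still has a genuine gap. The final step claims that if $\Ext_\A^{d+1}(I^k,P_{a'})\neq0$, then in the minimal injective coresolution $P_{a'}\to J^\bullet$ the module $J^{d+1}$ must acquire a summand in $I^0(\A)$, contradicting \th\ref{I0Ik}\eqref{it:IkId1}. This implication is not justified: a nonvanishing $\Ext_\A^{d+1}(I^k,P_{a'})$ only produces a morphism $I^k\to J^{d+1}$ that does not factor through $J^d\to J^{d+1}$; it does not force any indecomposable summand of $J^{d+1}$ to lie in $I^0(\A)$. Nonzero maps between nonisomorphic indecomposable injectives exist in general, so knowing $I^k\in I^0(\A)$ and $I^0(\A)\cap\add J^{d+1}=0$ does not kill such a morphism. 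Your earlier attempts (via the cokernel $C=I^k/P_b$ or via the duality $\Ext_\A^{d+1}(-,\A)$ alone) run into the same wall: condition \eqref{it:new-condition} gives information about modules in $\lperp{}\A$, but $I^k$ itself is not in $\lperp{}\A$, so you need a bridge.

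That bridge, which your proposal never invokes, is the pair \th\ref{tor_homext} and \th\ref{gorenstein_prop}. The paper argues by contradiction: if $\pdim I^k=d+1$, minimality of projective resolutions yields a simple $S$ with $\Tor_{d+1}^\A(S,I^k)\neq0$, hence $\pdim S=d+1$. By \th\ref{tor_homext} this becomes $\Hom_{\A^\op}(\Ext_\A^{d+1}(S,-)|_\A,I^k)\neq0$, and since $P_a\to I^\bullet$ is a minimal injective coresolution and $T:=\Ext_\A^{d+1}(S,-)|_\A$ is simple (\th\ref{gorenstein_prop}), this equals $\Ext_{\A^\op}^k(T,P_a)\neq0$. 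But \th\ref{gorenstein_prop} also gives $T\in\lperp{}(\A_{\A^\op})\subset\lperp{d}(\A_{\A^\op})$, contradicting $k\le d$. The passage through $\Tor$ and the simple-to-simple duality of \th\ref{gorenstein_prop} is exactly what lets one exploit condition \eqref{it:new-condition} on the $\A^\op$ side; your attempt to use \th\ref{I0Ik}\eqref{it:IkId1} instead does not supply an equivalent mechanism.
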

\begin{proof}
  The proof of \cite[Prop. 6.3(1)]{iyama_tau-categories_2005} carries
  over. We reproduce it here for the convenience of the reader. Recall
  that $\gldim\A\leq d+1$, see \th\ref{gldim}. Let
  $k\in\set{0,\dots,d}$ and suppose that $\pdim I^k=d+1$. Since
  minimal projective resolutions in $\mod\A$ exist, see
  \th\ref{minimal_proj_res}, it readily follows that there exist a
  simple $\A$-module $S$ such that $\Tor_{d+1}^\A(S,I^k)\neq0$. Note
  that this implies that $S$ has projective dimension
  $d+1$. \th\ref{tor_homext} then implies that
  \[
    \Hom_{\A^\op}(\Ext_\A^{d+1}(S,-)|_{\A},I^k)\neq0.
  \]
  Given that $\Ext_\A^{d+1}(S,-)|_\A$ is a simple $\A^\op$-module of
  projective dimension $d+1$, see \th\ref{gorenstein_prop}, and
  $P_a\to I^\bul$ is a minimal injective coresolution, there is an
  isomorphism
  \[
    \Ext_{\A^\op}^k(\Ext_\A^{d+1}(S,-)|_{\A},P_a)\cong\Hom_{\A^\op}(\Ext_\A^{d+1}(S,-)|_{\A},I^k)\neq0.
  \]
  This contradicts the fact that $\Ext_\A^{d+1}(S,-)|_\A$ belongs to
  $\lperp{}(\A_{\A^\op})\subset \lperp{d}(\A_{\A^\op})$, see
  \th\ref{gorenstein_prop}. Therefore $I^k$ has projective
  dimension at most $d$.
\end{proof}

As we shall see, the proof of the implication
\eqref{it:new-condition}$\Rightarrow$\eqref{it:d-auslander} in
\th\ref{auslander_correspondence} follows from the following result.

\begin{proposition}
  \th\label{OmegaPd} Let $\A$ be a dualizing $\kk$-variety of global
  dimension $d+1$ such that
  $\lperp{}\A\subset\lperp{d}\A$. Then,
  \[
    \Omega(\mod\A)=\setP{M\in\mod\A}{\pdim M\leq d}.
  \]
\end{proposition}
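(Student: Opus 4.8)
The plan is to prove the two inclusions separately; the one that takes work is $\setP{M\in\mod\A}{\pdim M\le d}\subseteq\Omega(\mod\A)$. It is convenient to recall first that $\Omega(\mod\A)$ is the subcategory of submodules of finitely generated projective $\A$-modules; equivalently, $M\in\Omega(\mod\A)$ if and only if the canonical morphism $\varepsilon_M\colon M\to M^{**}$ is a monomorphism (if $\varepsilon_M$ is a monomorphism then $M$ embeds into $M^{**}\in\Omega^2(\mod\A)$ by \th\ref{auslander-bridger_sequence}, hence into a finitely generated projective; conversely an embedding into a projective forces $\varepsilon_M$ to be a monomorphism, by naturality of $\varepsilon$). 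The inclusion $\Omega(\mod\A)\subseteq\setP{M\in\mod\A}{\pdim M\le d}$ is then immediate: given a short exact sequence $0\to M\to P\to L\to 0$ with $P$ finitely generated projective, we have $\pdim L\le\gldim\A\le d+1$, whence $\pdim M\le\max\{0,\pdim L-1\}\le d$.

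For the reverse inclusion fix $M$ with $\pdim M\le d$; I must show $\varepsilon_M$ is a monomorphism. Consider the Auslander--Bridger exact sequence (\th\ref{auslander-bridger_sequence})
\[
  0\to N\to M\xrightarrow{\ \varepsilon_M\ }M^{**}\to C\to 0,\qquad N=\Ext_{\A^\op}^1(\Tr M,-)|_{\A^\op},\quad C=\Ext_{\A^\op}^2(\Tr M,-)|_{\A^\op},
\]
so that $N=\ker(\varepsilon_M)$ and $M^{**}\in\Omega^2(\mod\A)$; since $M^{**}$ — and therefore $\operatorname{im}(\varepsilon_M)$ — is a submodule of a finitely generated projective, it is enough to prove $N=0$. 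Two facts about $N$ are needed. First, $\pdim N\le d$: as $M^{**}$ embeds in a projective and $\gldim\A\le d+1$ we get $\pdim M^{**}\le d$; the sequence $0\to\operatorname{im}(\varepsilon_M)\to M^{**}\to C\to 0$ with $\pdim C\le d+1$ gives $\pdim\operatorname{im}(\varepsilon_M)\le d$; and then $0\to N\to M\to\operatorname{im}(\varepsilon_M)\to 0$ gives $\pdim N\le d$. Second — this is the technical heart — $N\in\lperp{}\A$, that is $\Hom_\A(N,\A)=0$. This step does not use the hypothesis on $\lperp{}\A$; it follows from $\gldim\A<\infty$ alone, by a dimension shift through the transpose: $N=\Ext_{\A^\op}^1(\Tr M,-)|_{\A^\op}$, the $\A^\op$-module $\Tr M$ has finite projective dimension because $\gldim\A^\op=\gldim\A\le d+1$, and for an $\A^\op$-module $Y$ of finite projective dimension the $\A$-dual of $\Ext_{\A^\op}^k(Y,-)|_{\A^\op}$ vanishes for every $k\ge 1$; the latter is extracted from \th\ref{technical_lemma} applied to $Y$ over $\A^\op$, combining the vanishing $\varphi^*=0$ there with the long exact sequence obtained by applying $\Hom_\A(-,Q)$ (for $Q$ a finitely generated projective), and using that the higher transposes occurring in those sequences eventually vanish, so that in the end the relevant kernel is the kernel of an injective map between projectives coming from a minimal resolution.

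Granting $N\in\lperp{}\A$, the hypothesis $\lperp{}\A\subseteq\lperp{d}\A$ finishes the proof. It gives $\Ext_\A^k(N,\A)=0$ for all $1\le k\le d$, and, combined with $\pdim N\le d$, this forces $N$ to be projective: if $p:=\pdim N$ satisfied $1\le p\le d$, choose a minimal projective resolution $0\to P_p\xrightarrow{\ d_p\ }P_{p-1}\to\cdots\to P_0\to N\to 0$ (it exists by \th\ref{minimal_proj_res}); then $\Ext_\A^p(N,P_p)=\coker\bigl(\Hom_\A(P_{p-1},P_p)\to\Hom_\A(P_p,P_p)\bigr)=0$, so the identity endomorphism of $P_p$ factors through $d_p$, i.e. $d_p$ is a split monomorphism — contradicting minimality, since $\operatorname{im}(d_p)\subseteq\rad P_{p-1}$ and no nonzero direct summand of $P_{p-1}$ lies in $\rad P_{p-1}$. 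Hence $N$ is projective; but a nonzero finitely generated projective $N$ is a direct summand of some $P_a$, so $\Hom_\A(N,P_a)\ne 0$, contradicting $N\in\lperp{}\A$. Therefore $N=0$, $\varepsilon_M$ is a monomorphism, and $M\in\Omega(\mod\A)$.

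The step I expect to be the main obstacle is the proof that $N=\ker(\varepsilon_M)$ lies in $\lperp{}\A$; once that is in place, the rest is routine homological bookkeeping. That $\gldim\A<\infty$ is genuinely needed there is already visible over self-injective algebras, where one finds modules $M$ with $\Hom_\A(\ker(\varepsilon_M),\A)\ne 0$; the argument must exploit that the projective resolution of $\Tr M$ over $\A^\op$ terminates. The role of the second hypothesis is confined to the last paragraph: the passage from ``$N\in\lperp{}\A$ with $\pdim N\le d$'' to ``$N=0$'' fails without $\lperp{}\A\subseteq\lperp{d}\A$, as one sees already for $\A=\proj\Lambda$ with $\Lambda$ a hereditary Artin algebra of type $A_2$ and $d=1$, where the non-projective simple module has projective dimension $1\le d$ yet is not a submodule of any projective.
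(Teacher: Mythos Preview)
Your proof has a genuine gap: the assertion that $N=\ker(\varepsilon_M)$ lies in $\lperp{}\A$ whenever $\gldim\A<\infty$ is false, and the inductive scheme you sketch via \th\ref{technical_lemma} does not close. For a concrete counterexample take $A=kQ/(ab)$ where $Q$ is the quiver $1\xrightarrow{a}2\xrightarrow{b}3\xleftarrow{c}4$, so that $\gldim A=2$ and $d=1$. Let $M=I_3$ be the injective envelope of $S_3$ (a $3$-dimensional module with socle $S_3$ and top $S_2\oplus S_4$). Then $\pdim M=1\le d$, one finds $\Tr M\cong S_3$ as a left $A$-module, and a direct computation gives
\[
  N=\ker(\varepsilon_M)=\Ext^1_{A^{\op}}(S_3,A)\cong P_4,
\]
a nonzero projective right $A$-module, whence $\Hom_A(N,A)\neq0$. (In this algebra the hypothesis $\lperp{}A\subset\lperp{1}A$ also fails---for instance $S_4\in\lperp{}A\setminus\lperp{1}A$---so the proposition itself is not contradicted; only your hypothesis-free route to $N^*=0$ is.) The obstruction to your induction is that from $\varphi^*=0$ one obtains merely $(E_k)^*\hookrightarrow\Ext^2_\A(\Tr_{k+1}Y,\A)$, and the short exact sequence at level $k+1$ relates this group not just to $\Tr_{k+2}Y$ but also to $\Ext^{\ge1}_\A(E_{k+1},\A)$, over which you have no control; knowing $(E_{k+1})^*=0$ is not enough.

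The paper's approach is different and never passes through $N^*=0$: it proves $N=0$ outright by showing $\Tr M\in\lperp{1}\A$, via a backward induction establishing $\Tr_kM\in\lperp{k}\A$ for $k=d,d-1,\dots,1$. The base case $\Tr_dM\in\lperp{}\A$ uses $\pdim M\le d$. In the inductive step one first proves that $E_k:=\Ext^k_\A(M,-)|_\A$ belongs to $\lperp{}\A$ (this needs both $\varphi^*=0$ \emph{and} the inductive information $\Ext^2_\A(\Tr_{k+1}M,\A)=0$), and then applies the hypothesis $\lperp{}\A\subset\lperp{d}\A$ to promote this to $E_k\in\lperp{d}\A$, which is exactly what makes the descent close. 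Thus the hypothesis is used at every stage of the argument, not only in a final bookkeeping step as your last paragraph suggests.
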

\begin{proof}
  Since $\gldim\A\leq d+1$, see \th\ref{gldim}, it is clear that
  \[
    \Omega(\mod\A)\subseteq\setP{M\in\mod\A}{\pdim M\leq d}.
  \]
  We now show that the opposite inclusion holds.
  
  Let $M$ be an $\A$-module of projective dimension at most $d$.  By
  \th\ref{auslander-bridger_sequence} there exist an exact sequence
  \[
    \begin{tikzcd}[column sep=small]
      0\rar&\Ext_{\A^\op}^1(\Tr
      M,-)|_{\A}\rar&M\rar&M^{**}\rar&\Ext_{\A^\op}^2(\Tr
      M,-)|_{\A}\rar&0
    \end{tikzcd}
  \]
  and $M^{**}\in\Omega^2(\mod\A)$. Therefore it is enough to show that
  $\Tr M\in\lperp{1}\A$ since $\Omega(\mod A)$ is closed under
  submodules in $\mod A$. Using backwards induction on $k$, we show
  that $\Tr_k M\in\lperp{k}\A$ for $k\in\set{1,\dots,d}$.

  Let
  \[
    \begin{tikzcd}[column sep=small]
      0\rar&P_d\rar&\cdots\rar&P_1\rar&P_0\rar&M\rar&0
    \end{tikzcd}
  \]
  be a projective resolution of $M$. Then we have an exact sequence
  $P_{d-1}^*\to P_d^*\to \Tr_d M\to0$. Applying the $\A$-duality it
  readily follows that $(\Tr_d M)^*=0$, that is
  $\Tr_d M\in\lperp{}\A\subset\lperp{d}\A$.
  
  We need to show that $\Tr_{k+1}M\in\lperp{k+1}\A$ implies
  $\Tr_k M\in\lperp{k}\A$. By \th\ref{technical_lemma} there exists a
  short exact sequence
  \[
    \begin{tikzcd}[column sep=small]
      0\rar&\Ext_\A^k(M,-)|_{\A}\rar{\varphi}&\Tr_k
      M\rar&\Omega\Tr_{k+1}M\rar&0
    \end{tikzcd}
  \]
  such that $\varphi^*=0$. Put $E_k:=\Ext_\A^k(M,-)|_{\A}$. Applying
  $(-)^*$ to this sequence yields an exact sequence
  \[
    \begin{tikzcd}[column sep=small]
      (\Tr_k
      M)^*\rar{\varphi^*}\rar&(E_k)^*\rar&\Ext_\A^1(\Omega\Tr_{k+1}M,-)|_{\A}\cong\Ext_\A^2(\Tr_{k+1}M,-)|_{\A}.
    \end{tikzcd}
  \]
  Since $\varphi^*=0$ and, by assumption,
  $\Ext_\A^2(\Tr_{k+1},-)|_\A=0$, we conclude that $(E_k)^*=0$, that
  is $E_k\in\lperp{}\A\subset\lperp{d}\A$. Finally, for each
  $j\in\set{1,\dots,k}$ there is an exact sequence
  \[
    \begin{tikzcd}[column sep=small]
      0=\Ext_\A^{j+1}(\Tr_{k+1}M,-)|_{\A}\rar&\Ext_\A^j(\Tr_k
      M,-)|_{\A}\rar&\Ext_\A^j(E_k,-)|_{\A}=0.
    \end{tikzcd}
  \]
  Therefore $\Tr_{k}M\in\lperp{k}\A$. This finishes the proof.
\end{proof}

We can now give the proof of the implication
\eqref{it:new-condition}$\Rightarrow$\eqref{it:d-auslander} in
\th\ref{auslander_correspondence}.

\begin{proof}[Proof of \eqref{it:new-condition}$\Rightarrow$\eqref{it:d-auslander} in \th\ref{auslander_correspondence}]
  By \th\ref{gldim} the inequality $\gldim\A\leq d+1$ holds. Moreover,
  since $\A$ is a $d$-abelian dualizing $\kk$-variety if and only if
  so is $\A^\op$, it is enough to show the one-sided condition
  $\domdim\A\geq d+1$. Let $a\in\A$ and
  \[
    \begin{tikzcd}[column sep=small]
      0\rar&P_a\rar&I^0\rar&I^1\rar&\cdots\rar&I^d\rar&I^{d+1}\rar&0
    \end{tikzcd}
  \]
  a minimal injective coresolution of $P_a$. Then, by
  \th\ref{d1d1-simple-condition,OmegaPd} for each
  $k\in\set{0,1,\dots,d}$ the $\A$-module $I^k$ belongs to
  $\Omega(\mod\A)$. In particular, there exist $b\in\A$ and a
  monomorphism $I^k\to P_b$, which splits since $I^k$ is
  injective. Therefore for all $k\in\set{0,1,\dots,d}$ the $\A$-module
  $I^k$ is projective. This shows that $\domdim\A\geq d+1$, whence
  $\A$ is a $d$-Auslander dualizing $\kk$-variety.
\end{proof}

\subsection{Proof of
  \eqref{it:d-auslander}$\Rightarrow$\eqref{it:d-ct} in
  \th\ref{auslander_correspondence}}

We follow closely the proof
\cite[Thm. 2.6]{iyama_auslander-reiten_2008}.  In this subsection, we
fix a $d$-Auslander dualizing $\kk$-variety $\A$. We denote by
$\Q=\Q_\A$ the full subcategory of $\A_{\A}\subseteq\mod\A$ of
projective-injective $\A$-modules and define
\[
  \B:=\setP{b\in\A}{D(P_b^*)\in\Q_\A}.
\]
Then we have an equivalence $D(-)^*\colon\B\to\Q$. Consider the
functors
\begin{equation*}
  \begin{tikzcd}[column sep=small]
    \FF\colon\mod\A\rar&\mod\B&\text{and}&\GG\colon\mod\B\rar&\mod\A
  \end{tikzcd}
\end{equation*}
defined by $\FF M:=M|_\B$ and $\GG X:=\Hom_\B(P_{-}|_\B,X)$. Note that
$\FF$ is an exact functor and $(\FF,\GG)$ is an adjoint pair.  We shall show that $\B$ is a dualizing
$\kk$-variety (\th\ref{B_is_dkv}) and $\M:=\FF\A$ is a
$d$-cluster-tilting subcategory of $\mod\B$, thus proving the
implication \eqref{it:d-auslander}$\Rightarrow$\eqref{it:d-ct} in
\th\ref{auslander_correspondence}.

\begin{lemma}
  \th\label{B_is_dkv} The category $\B$ is a dualizing $\kk$-variety.
\end{lemma}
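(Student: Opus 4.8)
The goal is to show that $\B:=\setP{b\in\A}{D(P_b^*)\in\Q_\A}$ is a dualizing $\kk$-variety. My strategy is to exhibit $\B$ as a functorially finite subcategory of $\A$ and then invoke \th\ref{dkv_ff}. Since $\A$ is itself a dualizing $\kk$-variety, it is $\Hom$-finite, $\kk$-linear, and Krull--Schmidt, and any subcategory closed under isomorphisms inherits these properties; what remains is to verify that $\B$ is closed under direct sums and summands (so that it is genuinely an additive subcategory of $\A$, hence essentially small Krull--Schmidt) and that it is both contravariantly and covariantly finite in $\A$.

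First I would observe that $b\mapsto D(P_b^*)$ is (up to the equivalence $(-)^*\colon\A_\A\to\A_{\A^\op}$ followed by the duality $D\colon\mod(\A^\op)\to\mod\A$) an additive functor, so it carries direct sums of objects of $\A$ to direct sums and summands to summands; since $\Q_\A$ — the category of projective-injective $\A$-modules — is closed under direct sums and summands in $\mod\A$, the subcategory $\B$ is closed under direct sums and summands in $\A$. Thus $D(-)^*$ restricts to an equivalence $\B\xrightarrow{\sim}\Q$, and $\B$ is an essentially small $\kk$-linear Krull--Schmidt $\Hom$-finite category. It therefore suffices, by \th\ref{dkv_ff}, to check functorial finiteness of $\B$ in $\A$.

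For contravariant finiteness: given $a\in\A$, I want a right $\B$-approximation $b\to a$. The natural candidate is obtained from the theory of projective-injective objects — one takes a projective cover (in $\mod\A$, which has projective covers by \th\ref{minimal_proj_res}) of the injective envelope, or dually constructs the approximation via \th\ref{lemma_DA}. Concretely: $\A_\A$ and $(D\A)_\A$ are functorially finite in $\mod\A$ by \th\ref{lemma_DA}, and $\Q_\A=\A_\A\cap(D\A)_\A$ up to some care; a morphism in $\A$ whose image under $P_{(-)}$ approximates from $\Q_\A$ should, after transporting along the Yoneda embedding and the equivalence $D(-)^*$, give the desired $\B$-approximation. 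I expect covariant finiteness to follow by the dual construction together with the duality on $\mod\A$.

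\textbf{Main obstacle.} The delicate point is pinning down precisely how $\Q_\A$ sits inside $\mod\A$ and translating an approximation by $\Q_\A$ in $\mod\A$ back into an approximation by $\B$ inside $\A$. One must be careful that $\Q_\A$ is functorially finite in $\mod\A$ (which follows from \th\ref{lemma_DA} since $\Q_\A$ is a functorially finite subcategory of the functorially finite $\A_\A$, or because $\A$ being a $d$-Auslander dualizing $\kk$-variety forces enough projective-injectives), and that the functor $\GG=\Hom_\B(P_{(-)}|_\B,-)$ and the adjunction $(\FF,\GG)$ interact correctly with these approximations; the bookkeeping between $\A$-modules, $\A^\op$-modules, and $\B$-modules under the two dualities $D$ and $(-)^*$ is where the real work lies, even though each individual step is formal.
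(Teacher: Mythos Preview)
Your overall strategy---reduce to \th\ref{dkv_ff} by establishing functorial finiteness---matches the paper's, but the route you sketch is more circuitous and misses the one concrete step that makes everything work.

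The paper does not try to show $\B$ is functorially finite in $\A$. Instead it shows $\Q_\A$ is functorially finite in $\A_\A$ and then uses the equivalence $D(-)^*\colon\B\xrightarrow{\sim}\Q_\A$ (stated just before the lemma) to transport the conclusion. This sidesteps all of the bookkeeping you flag in your ``main obstacle'' paragraph: there is no need to translate $\Q_\A$-approximations back through $\FF$, $\GG$, or the adjunction.

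More importantly, the key input you do not make explicit is the \emph{dominant dimension} hypothesis. Covariant finiteness of $\Q_\A$ in $\A_\A$ is immediate from $\domdim\A\geq d+1$: the injective envelope $P_a\hookrightarrow I^0$ has $I^0\in\Q_\A$, and any map from $P_a$ to a projective-injective extends over a monomorphism into an injective. Contravariant finiteness then follows by applying the same argument on the $\A^\op$ side (using $\domdim\A^\op\geq d+1$) and transporting via $D$ and \th\ref{lemma_DA}. Your parenthetical ``$\A$ being a $d$-Auslander dualizing $\kk$-variety forces enough projective-injectives'' is exactly this, but it deserves to be the centerpiece rather than an afterthought.

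Finally, your alternative route---deducing functorial finiteness of $\Q_\A$ from \th\ref{lemma_DA} and the identification $\Q_\A=\A_\A\cap(D\A)_\A$---has a genuine gap: the intersection of two functorially finite subcategories is \emph{not} functorially finite in general, so this step does not go through without the dominant-dimension argument above.
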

\begin{proof}
  By \th\ref{dkv_ff} it is enough to show that $\Q_\A$ is functorially
  finite in $\A_\A$.

  Let $P_a\in\A_\A$. Given that $\domdim\A\geq d+1$ there exist a
  projective-injective $\A$-module $P_q$ and a monomorphism
  $P_a\to P_q$. Moreover, every morphism $P_a\to P_{q'}$ such that
  $P_{q'}$ is projective-injective factors through $P_a\to P_q$. This
  shows that $\Q_\A$ is covariantly finite in $\A_\A$. In order to
  show that $\Q_\A$ is contravariantly finite in $\A_\A$, note that
  $D(\Q_\A)=\Q_{\A^\op}$. Since $\domdim\A^\op\geq d+1$, by what we
  have shown above $D(\Q_{\A})$ is covariantly finite in
  $\A_{\A^\op}$.  Since $D$ is a duality, $\Q_\A$ is contravariantly
  finite in $(D\A)_\A$ and hence in $\A_\A$ by \th\ref{lemma_DA}. This
  shows that $\Q_\A$ is functorially finite in $\A_\A$.
\end{proof}

In the following lemma, note that $D(\B_{\B^{\op}})$ is precisely the
full subcategory of $\mod\B$ of all injective $\B$-modules.

\begin{lemma}
  \th\label{FG} The following statements hold.
  \begin{enumerate}
  \item\label{it:section} There is a natural isomorphism
    $\FF\GG\cong \id_{\mod\B}$.
  \item\label{it:equivalences} The functors $\FF$ and $\GG$ induce
    mutually quasi-inverse equivalences
    \[
      \begin{tikzcd}[column sep=small]
        \Q_\A\rar{\sim}&D(\B_{\B^\op})&\text{and}&
        D(\B_{\B^\op})\rar{\sim}&\Q_\A.
      \end{tikzcd}
    \]
  \end{enumerate}
\end{lemma}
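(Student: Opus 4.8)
The goal is \th\ref{FG}: the adjoint pair $(\FF,\GG)$ induces an isomorphism $\FF\GG\cong\id_{\mod\B}$, and $\FF$, $\GG$ restrict to mutually quasi-inverse equivalences between $\Q_\A$ and the category $D(\B_{\B^\op})$ of injective $\B$-modules. The strategy is to first analyse the adjunction on \emph{representable} functors, where everything is controlled by Yoneda's lemma and the very definition of $\B$, and then bootstrap from representables to all of $\mod\B$ using right-exactness of $\FF$ and the fact that $\GG$, being a right adjoint, is left-exact (so in particular both functors preserve enough structure to propagate an isomorphism along presentations).

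\begin{proof}[Proof sketch]
  \textbf{Step 1: representables.} First I would compute $\FF P_b$ and $\GG$ applied to a representable $\B$-module. For $b\in\B$ one has $\FF P_b=\A(-,b)|_\B=\B(-,b)=P_b^{\B}$, so $\FF$ carries $\A_\A$ onto $\B_\B$ and, being exact, carries $\mod\A$ into $\mod\B$. Conversely, for $b\in\B$, Yoneda's lemma gives $\GG P_b^{\B}=\Hom_\B(P_{-}|_\B,P_b^{\B})\cong\B(-,b)\cong\A(-,b)$ as an $\A$-module (using again $P_a|_\B=\B(-,a)$ only makes sense for $a\in\B$; for general $a\in\A$ one uses that $\Hom_\B(\A(-,a)|_\B,\B(-,b))$ is computed via the approximation coming from functorial finiteness of $\Q_\A$ in $\A_\A$, i.e. \th\ref{B_is_dkv}). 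The upshot is that on representables $\FF\GG\cong\id$, and moreover $\GG$ sends $\B_\B$ into $\Q_\A$: indeed $\GG P_b^{\B}\cong P_b$ with $b\in\B$, and by definition of $\B$ one has $D(P_b^*)\in\Q_\A$; dualizing the equivalence $D(-)^*\colon\B\to\Q$ identifies these $P_b$ precisely with the projective-injective $\A$-modules. Applying $D$ throughout turns $\B_\B$ into $D(\B_{\B^\op})$, the injective $\B$-modules, which is the content of \eqref{it:equivalences} on the generating objects.

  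\textbf{Step 2: propagate to $\mod\B$.} For an arbitrary $X\in\mod\B$, choose a projective presentation $P_{b_1}^{\B}\to P_{b_0}^{\B}\to X\to 0$. Since $\GG$ is a right adjoint it need not be right exact, but $\FF$ is exact; so I would apply $\GG$ to the presentation, then apply $\FF$, and compare with the original presentation via the counit $\FF\GG\to\id$. On the representable terms the counit is an isomorphism by Step 1, so by the five lemma (applied to the map of cokernel sequences) the counit $\FF\GG X\to X$ is an isomorphism as well. This proves \eqref{it:section}. For \eqref{it:equivalences}, restrict $\GG$ to $D(\B_{\B^\op})$: by Step 1 it lands in $\Q_\A$, and $\FF\GG\cong\id$ on that subcategory by \eqref{it:section}; conversely, for $P_q\in\Q_\A$ one checks $\GG\FF P_q\cong P_q$ using that $P_q$ is projective-injective — its restriction $\FF P_q=P_q|_\B$ is injective in $\mod\B$ (here one uses $D$ and the equivalence $D(-)^*$ again), and then $\GG\FF P_q\cong\GG$ of an injective $\B$-module, which by Step 1 is the corresponding projective-injective $\A$-module, namely $P_q$. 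Naturality of the unit $\id\to\GG\FF$ makes this an isomorphism of functors on $\Q_\A$.

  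\textbf{Main obstacle.} The delicate point is Step 1 for \emph{non}-$\B$ objects: computing $\GG X=\Hom_\B(P_{-}|_\B,X)$ requires understanding the restricted representables $P_a|_\B$ for $a\in\A\setminus\B$, which are generally not projective in $\mod\B$. The functorial finiteness of $\Q_\A$ in $\A_\A$ established in \th\ref{B_is_dkv} — equivalently, the existence of right $\B$-approximations — is exactly what is needed to control $\Hom_\B(P_a|_\B,-)$, and the dominant dimension hypothesis $\domdim\A\ge d+1$ enters precisely here (it guarantees each $P_a$ embeds in a projective-injective, giving the approximation). Once the representable case is pinned down, the passage to all of $\mod\B$ via presentations and the five lemma is routine. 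I would also remark that the identification of $D(\B_{\B^\op})$ with the injective $\B$-modules, asserted in the paragraph before the lemma, should be recorded explicitly since it is used repeatedly: it follows from the duality $D\colon\mod\B\to\mod(\B^\op)$ restricting to $\B_{\B^\op}\to D(\B_{\B^\op})$ together with the fact that projectives go to injectives under $D$.
\end{proof}
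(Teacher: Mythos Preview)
Your Step~2 for part~\eqref{it:section} has a genuine gap: the five lemma needs the top row $\FF\GG P_{b_1}^\B\to\FF\GG P_{b_0}^\B\to\FF\GG X\to0$ to be exact, but you have not shown that $\FF\GG$ is right exact. Since $\GG$ is only known to be left exact, there is no reason for $\FF\GG P_{b_0}^\B\to\FF\GG X$ to be surjective, and the argument stalls at showing the counit is anything more than a split epimorphism. The fix is much simpler than your detour: the Yoneda computation you do in Step~1 already works for \emph{every} $X\in\mod\B$, not just representables. Indeed, for $b\in\B$ one has $(\FF\GG X)(b)=(\GG X)(b)=\Hom_\B(P_b|_\B,X)$, and since $b\in\B$ the restricted module $P_b|_\B$ is the representable $\B(-,b)$, so Yoneda gives $X(b)$ directly. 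This is the paper's one-line proof of \eqref{it:section}; no presentation or five lemma is needed.

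Your treatment of part~\eqref{it:equivalences} conflates projective and injective $\B$-modules. You compute $\GG P_b^\B\cong P_b$ and then assert this lands in $\Q_\A$, but for $b\in\B$ the $\A$-module $P_b$ is \emph{not} projective-injective in general; it is $D(P_b^*)$ that lies in $\Q_\A$. The statement concerns the injectives $D\B(b,-)\in D(\B_{\B^\op})$, not the projectives $P_b^\B$. The paper instead computes directly: $\FF D(P_b^*)=D\A(b,-)|_\B=D\B(b,-)$, and for the other direction uses the $D$-adjunction $\Hom_\B(P_a|_\B,D\B(b,-))\cong\Hom_{\B^\op}(\B(b,-),DP_a|_\B)\cong DP_a(b)$ together with Yoneda, giving $\GG D\B(b,-)\cong D(P_b^*)$. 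This handles $P_a|_\B$ for arbitrary $a\in\A$ without any appeal to approximations or dominant dimension, so the ``main obstacle'' you flag does not actually arise.
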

\begin{proof}
  \eqref{it:section} Let $X\in\mod\B$. By Yoneda's lemma, for each
  $b\in\B$ there are functorial isomorphisms
  \[
    \begin{tikzcd}
      (\FF\GG X)(b)=(\GG X)(b)\cong\Hom_\B(P_b|_\B,X)\cong X(b).
    \end{tikzcd}
  \]
  It follows that $\FF\GG\cong \id_{\mod\B}$.
  
  \eqref{it:equivalences} Let $b\in\B$; hence $D(P_b^*)\in\Q$. There
  are functorial isomorphisms
  \[
    \FF D(P_b^*)=D\Hom_\A(P_b,-)|_\B\cong D\A(b,-)|_\B=D\B(b,-).
  \]
  On the other hand, for each $a\in\A$ there are functorial
  isomorphisms
  \[
    (\GG D\B(b,-))(a)\cong
    \Hom_\B(P_a|_\B,D\B(b,-))\cong\Hom_{\B^\op}(\B(b,-),DP_{a}|_\B)\cong
    D P_a(b).
  \]
  Therefore $\GG D\B(b,-)\cong D\A(b,-)\cong D(P_b^*)$. The claim
  follows.
\end{proof}

As a first consequence of \th\ref{FG}, we obtain the following result.

\begin{lemma}
  \th\label{M_is_rigid} The following statements hold.
  \begin{enumerate}
  \item\label{it:fully_faithful} The functor $\FF\colon\A_\A\to\mod\B$
    is fully faithful.
  \item\label{it:Mrigid} The subcategory $\M=\FF\A\subseteq\mod\B$ is
    $d$-rigid, that is $\B\subseteq\lperp{d-1}\B$.
  \end{enumerate} 
\end{lemma}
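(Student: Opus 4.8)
The plan is to establish the two assertions separately, using \th\ref{B_is_dkv}, \th\ref{FG}, and the hypothesis $\domdim\A\ge d+1$, which furnishes for each $a\in\A$ a minimal injective coresolution of $P_a$ whose first $d+1$ terms belong to $\Q_\A$. For \eqref{it:fully_faithful} I would first reduce to the case of a projective-injective target. Fix $a,a'\in\A$ and let $0\to P_{a'}\to I^0\to I^1\to\cdots\to I^{d+1}\to 0$ be a minimal injective coresolution of $P_{a'}$ in $\mod\A$ (of length at most $d+1$, as $\gldim\A\le d+1$); since $\domdim\A\ge d+1\ge 2$, the modules $I^0$ and $I^1$ lie in $\Q_\A$. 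As $\FF$ is exact and $\mod\B$ is closed under kernels (\th\ref{B_is_dkv}), the sequence $0\to\FF P_{a'}\to\FF I^0\to\FF I^1$ is exact and $\FF P_{a'}\in\mod\B$. Comparing, through the restriction maps, the left-exact sequences obtained by applying $\Hom_\A(P_a,-)$ to $0\to P_{a'}\to I^0\to I^1$ and $\Hom_\B(\FF P_a,-)$ to $0\to\FF P_{a'}\to\FF I^0\to\FF I^1$, one is reduced to showing that the restriction map $\Hom_\A(P_a,Q)\to\Hom_\B(\FF P_a,\FF Q)$ is an isomorphism for every $Q\in\Q_\A$. Here an indecomposable $Q\in\Q_\A$ is injective, hence of the form $DV$ with $V=\A(b,-)=P_b^*$ an indecomposable projective $\A^\op$-module; then $DP_b^*=Q\in\Q_\A$ forces $b\in\B$ by the definition of $\B$, and the computation in the proof of \th\ref{FG} gives $\FF Q\cong D\B(b,-)$. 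Using the tensor--hom identity $\Hom_\A(M,D\A(b,-))\cong D(M\otimes_\A P_b^*)\cong DM(b)$, valid for all $M\in\mod\A$, together with its analogue over $\B$, both $\Hom_\A(P_a,Q)$ and $\Hom_\B(\FF P_a,\FF Q)$ become naturally identified with $D P_a(b)$; under these identifications the restriction map is the identity, because $(-)|_\B$ respects evaluation at the object $b\in\B\subseteq\A$. This proves that $\FF\colon\A_\A\to\mod\B$ is fully faithful.

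For \eqref{it:Mrigid}, since $\M=\add\setP{\FF P_a}{a\in\A}$ it suffices to show $\Ext_\B^k(\FF P_a,\FF P_{a'})=0$ for all $a,a'\in\A$ and all $1\le k\le d-1$. Let $0\to P_{a'}\to I^0\to\cdots\to I^{d+1}\to 0$ be a minimal injective coresolution in $\mod\A$; then $I^0,\dots,I^d\in\Q_\A$, so by \th\ref{FG} the modules $\FF I^0,\dots,\FF I^d$ are injective $\B$-modules, and applying the exact functor $\FF$ yields an exact sequence $0\to\FF P_{a'}\to\FF I^0\to\cdots\to\FF I^{d+1}\to 0$. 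A standard dimension-shift, using that $\FF I^0,\dots,\FF I^d$ are injective, identifies $\Ext_\B^k(\FF P_a,\FF P_{a'})$ for $1\le k\le d-1$ with the $k$-th cohomology of the complex $\Hom_\B(\FF P_a,\FF I^{\bullet})$. By part \eqref{it:fully_faithful} this complex agrees in cohomological degrees $\le d$ with $\Hom_\A(P_a,I^{\bullet})$, which is obtained by applying the exact functor $\Hom_\A(P_a,-)$ (note $P_a$ is projective) to the exact sequence $0\to P_{a'}\to I^0\to\cdots$ and hence has vanishing cohomology in positive degrees. Therefore $\Ext_\B^k(\FF P_a,\FF P_{a'})=0$ for $1\le k\le d-1$, that is, $\M$ is $d$-rigid.

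I expect the main obstacle to be the final step of \eqref{it:fully_faithful}: knowing abstractly that $\Hom_\A(P_a,Q)$ and $\Hom_\B(\FF P_a,\FF Q)$ are both isomorphic to $D P_a(b)$ is not sufficient, and one has to trace the tensor--hom adjunctions over $\A$ and over $\B$ to verify that the restriction functor $(-)|_\B$ intertwines them, that is, that it is compatible with evaluation at the objects of $\B$. The remaining ingredients---closure of $\mod\B$ under kernels, exactness of $\FF$, the description of $\FF$ on $\Q_\A$ from \th\ref{FG}, and the projectivity of the modules $P_a$---are already available or routine.
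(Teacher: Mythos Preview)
Your argument is correct and follows the same route as the paper: take the first $d+1$ terms of a minimal injective coresolution of $P_a$ (which lie in $\Q_\A$ by the dominant dimension hypothesis), apply $\FF$, use \th\ref{FG}\eqref{it:equivalences} to see that the resulting terms are injective $\B$-modules, and compare with the original acyclic complex.

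The only difference is in the bookkeeping for part \eqref{it:fully_faithful}. Rather than verifying directly that the restriction map $\Hom_\A(P_a,Q)\to\Hom_\B(\FF P_a,\FF Q)$ is an isomorphism for $Q\in\Q_\A$ (the step you flag as the main obstacle), the paper exploits the right adjoint $\GG$. Since $\GG\FF I^j\cong I^j$ for $I^j\in\Q_\A$ by \th\ref{FG}\eqref{it:equivalences}, the complex $0\to\GG\FF P_a\to\GG\FF I^0\to\cdots\to\GG\FF I^d$ is isomorphic to the original acyclic complex $0\to P_a\to I^0\to\cdots\to I^d$; in particular $\GG\FF P_a\cong P_a$, so the unit of the adjunction is an isomorphism on $\A_\A$, which is precisely fully faithfulness of $\FF|_{\A_\A}$. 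The same isomorphism of complexes gives the vanishing of $\Ext_\B^k(\M,\M)$ for $1\le k\le d-1$, so both statements come out at once. Packaging the comparison through $\GG$ absorbs the naturality check you were worried about: compatibility of the two tensor--hom identifications is replaced by naturality of the unit $\id\to\GG\FF$, which is automatic.
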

\begin{proof}
  Let $a\in\A$ and
  \begin{equation}
    \label{eq:cores_P}
    \begin{tikzcd}[column sep=small]
      0\rar&P_a\rar&I^0\rar&I^1\rar&\cdots\rar&I^d
    \end{tikzcd}
  \end{equation}
  be part of a minimal injective coresolution of $P_a$. Since
  $\domdim\A\geq d+1$, the injective $\A$-modules $I^0,I^1,\dots,I^d$
  are also projective. Then, by \th\ref{FG}\eqref{it:equivalences}, we
  have an injective coresolution
  \[
    \begin{tikzcd}[column sep=small]
      0\rar&\FF P_a\rar&\FF I^0\rar&\FF I^1\rar&\cdots\rar&\FF I^d.
    \end{tikzcd}
  \]
  By definition, for each
  $k\in\set{1,\dots,d}$ the homology of the complex
  \begin{equation}
	\label{eq:cores_GFP}
    \begin{tikzcd}[column sep=small]
      0\rar&\GG\FF P_a\rar&\GG\FF I^0\rar&\GG\FF
      I^1\rar&\cdots\rar&\GG\FF I^d
    \end{tikzcd}
  \end{equation}
  at $\GG\FF I^k$ is isomorphic to $\Ext_\B^k(\M,\M)$.  Finally, by
  \th\ref{FG}\eqref{it:equivalences}, the complex \eqref{eq:cores_GFP}
  is isomorphic to the acyclic complex \eqref{eq:cores_P}.  This shows
  that $\GG\FF P_a=P_a$, which means that $\FF|_\A$ is fully faithful,
  and that $\M$ is a $d$-rigid subcategory of $\mod\B$.
\end{proof}

We now give the proof of the implication
\eqref{it:d-auslander}$\Rightarrow$\eqref{it:d-ct} in
\th\ref{auslander_correspondence}.

\begin{proof}[Proof of \eqref{it:d-auslander}$\Rightarrow$\eqref{it:d-ct} in
\th\ref{auslander_correspondence}]
By \th\ref{FG,M_is_rigid,char_tilting} it only remains to show that if
$X$ is a $\B$-module such that $X\in\M^{\perp_d}$, then $X\in\M$ (note
that $\M$ contains $\B_\B$ by construction). Let
\begin{equation}
  \label{eq:cores_X}
  \begin{tikzcd}[column sep=small]
    0\rar&X\rar&I^0\rar&I^1\rar&\cdots\rar&I^d\rar&\cdots
  \end{tikzcd}
\end{equation}
be a minimal injective coresolution of $X$. By assumption, applying
$\GG$ to \eqref{eq:cores_X} yields an exact sequence
\[
  \begin{tikzcd}[column sep=small]
    0\rar&\GG X\rar&\GG I^0\rar&\GG I^1\rar&\cdots\rar&\GG I^d.
  \end{tikzcd}
\]
in which $\GG I^0,\GG I^1,\dots,\GG I^d$ are projective.  Since
$\gldim\A\leq d+1$, the $\A$-module $\GG X$ is projective. Thus,
$X\cong\FF\GG X$ belongs to $\M$, see \th\ref{FG}\eqref{it:section}.
\end{proof}

As a consequence of \th\ref{auslander_correspondence} we obtain a
characterization of injective objects in $d$-abelian dualizing
$\kk$-varieties. It should be compared with
\cite[Thm. 3.12]{jasso_n-abelian_2014}.

\begin{corollary}
  \th\label{injectives} Let $\A$ be a $d$-abelian dualizing
  $\kk$-variety and $q\in\A$. Then, the following statements are
  equivalent.
  \begin{enumerate}
  \item\label{it:injective} The object $q$ is injective in $\A$, that
    is for every monomorphism $f\colon b\to c$ in $\A$ the induced
    morphism $\A(c,q)\to\A(b,q)$ is surjective .
  \item\label{it:A-injective} The $\A$-module $P_q$ is injective.
  \item\label{it:preserves_d-kernels} For every left $d$-exact
    sequence
    \[
      \begin{tikzcd}[column sep=small]
        0\rar&a_{d+1}\rar&a_d\rar&\cdots\rar&a_1\rar&a_0
      \end{tikzcd}
    \]
    the sequence
    \[
      \begin{tikzcd}[column sep=small]
        \A(a_0,q)\rar&\A(a_1,q)\rar&\cdots\rar&\A(a_{d+1},q)\rar&0
      \end{tikzcd}
    \]
    is exact.
  \end{enumerate}
\end{corollary}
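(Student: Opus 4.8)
The plan is to prove the chain of implications \eqref{it:injective}$\Rightarrow$\eqref{it:A-injective}$\Rightarrow$\eqref{it:preserves_d-kernels}$\Rightarrow$\eqref{it:injective}. The key conceptual input is \th\ref{auslander_correspondence}, which identifies $\A$ with a $d$-cluster-tilting subcategory $\M=\FF\A$ of $\mod\B$ for some dualizing $\kk$-variety $\B$, together with \th\ref{char_tilting} which tells us that $\M$ contains every injective $\B$-module. Since $\FF$ is exact and fully faithful on $\A$, a sequence in $\A$ is left $d$-exact if and only if its image under $\FF$ is exact in $\mod\B$, and a morphism in $\A$ is a monomorphism if and only if $\FF$ sends it to a monomorphism in $\mod\B$. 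So all three conditions can be transported across $\FF$ and checked inside $\mod\B$.

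First I would establish \eqref{it:injective}$\Rightarrow$\eqref{it:A-injective}. Under $\FF$, the object $q$ injective in the $d$-abelian category $\A$ corresponds to $\FF q\in\M$ which is injective as an object of the $d$-abelian category $\M\subseteq\mod\B$; I would invoke \cite[Thm. 3.12]{jasso_n-abelian_2014} (as suggested by the remark preceding the corollary) to conclude that $\FF q$ is an injective $\B$-module. Since $\FF P_q = P_q|_\B$ and $\GG\FF P_q\cong P_q$ by \th\ref{M_is_rigid}, one then checks that $P_q$ is an injective $\A$-module: indeed an injective $\B$-module lies in $D(\B_{\B^\op})$, which under the equivalence $\GG$ of \th\ref{FG}\eqref{it:equivalences} corresponds to $\Q_\A$, the projective-injective $\A$-modules. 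Alternatively, and perhaps more directly, one can argue intrinsically: a monomorphism $f\colon M\to N$ in $\mod\A$ can be approximated by a left $\A_\A$-approximation, and using that every $M\in\mod\A$ has a minimal projective presentation $P_{a_1}\to P_{a_0}\to M\to 0$ with the map $a_1\to a_0$ (when $M\in\lperp{}\A$) an epimorphism in $\A$ by \th\ref{epi_perp}, one reduces $\Hom_\A(-,P_q)$-exactness to the defining property \eqref{it:injective} of $q$ via the $d$-exact sequences supplied by $d$-abelianness (axiom (A2)).

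The implication \eqref{it:A-injective}$\Rightarrow$\eqref{it:preserves_d-kernels} is essentially immediate: a left $d$-exact sequence $0\to a_{d+1}\to\cdots\to a_0$ in $\A$ induces, by the very definition of left $d$-exactness, an exact sequence of representable functors $0\to P_{a_{d+1}}\to\cdots\to P_{a_0}$ in $\mod\A$; applying the exact functor $\Hom_\A(-,P_q)$ (exact because $P_q$ is injective) yields the desired exact sequence $\A(a_0,q)\to\cdots\to\A(a_{d+1},q)\to 0$, using Yoneda to identify $\Hom_\A(P_{a_i},P_q)\cong\A(a_i,q)$. Finally, for \eqref{it:preserves_d-kernels}$\Rightarrow$\eqref{it:injective}, given a monomorphism $f\colon b\to c$ in $\A$, axiom (A2)$^\op$ of $d$-abelianness extends it to a $d$-exact sequence $0\to b\to c\to a_{d-1}\to\cdots\to a_0\to 0$; this is in particular left $d$-exact (after reindexing so that $b=a_{d+1}$, $c=a_d$), so applying the hypothesis \eqref{it:preserves_d-kernels} gives exactness of $\A(c,q)\to\A(b,q)\to 0$ at $\A(b,q)$, i.e. surjectivity of $\A(c,q)\to\A(b,q)$, which is precisely injectivity of $q$.

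The main obstacle is the first implication \eqref{it:injective}$\Rightarrow$\eqref{it:A-injective}: one must correctly relate the abstract notion of injective object in the $d$-abelian category $\A$ to the module-theoretic notion of injective $\A$-module, and the cleanest route passes through the identification of $\A$ with a $d$-cluster-tilting subcategory and the characterization \cite[Thm. 3.12]{jasso_n-abelian_2014} of injectives therein, together with the bookkeeping of the equivalences $\FF$, $\GG$ between $\Q_\A$ and $D(\B_{\B^\op})$ from \th\ref{FG}. The remaining two implications are formal consequences of Yoneda's lemma and the axioms of $d$-abelian categories.
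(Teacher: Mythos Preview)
Your proposal is correct, and the implications \eqref{it:A-injective}$\Rightarrow$\eqref{it:preserves_d-kernels} and \eqref{it:preserves_d-kernels}$\Rightarrow$\eqref{it:injective} coincide with the paper's arguments essentially verbatim.

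For \eqref{it:injective}$\Rightarrow$\eqref{it:A-injective}, however, you take a considerably longer route than the paper. You transport the problem through the embedding $\FF\colon\A\to\mod\B$, invoke \cite[Thm.~3.12]{jasso_n-abelian_2014} to identify injectives of the $d$-abelian category $\M$ with injective $\B$-modules, and then pull back through the equivalence $\GG\colon D(\B_{\B^\op})\to\Q_\A$ of \th\ref{FG}. This works, but it leans on an external result and on the bookkeeping of $\FF$ and $\GG$. The paper instead stays inside $\mod\A$ and uses only the $d$-Auslander property already established in \th\ref{auslander_correspondence}: since $\domdim\A\ge d+1\ge 1$, the injective envelope of $P_q$ has the form $P_q\hookrightarrow P_{q'}$; the underlying morphism $q\to q'$ is a monomorphism in $\A$, hence splits by hypothesis \eqref{it:injective}, so $P_q$ is a summand of the injective module $P_{q'}$. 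This is a two-line argument that avoids both the embedding into $\mod\B$ and the citation of \cite{jasso_n-abelian_2014}. Your ``alternative intrinsic'' sketch via \th\ref{epi_perp} and axiom (A2) is not fleshed out enough to assess, and in any case the paper's argument is more direct than either of your proposed routes.
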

\begin{proof}
  \eqref{it:injective}$\Rightarrow$\eqref{it:A-injective} Let $q\in\A$
  be an injective object. Since $\A$ is a $d$-Auslander dualizing
  $\kk$-variety, the injective hull of $P_q$ is of the form
  $P_q\to P_{q'}$; this is induced by a monomorphism $q\to q'$, which
  splits. Thus, $P_q$ is a direct summand of $P_{q'}$ and therefore an
  injective $\A$-module.

  \eqref{it:A-injective}$\Leftrightarrow$\eqref{it:preserves_d-kernels}
  The $\A$-module $P_q$ is injective if and only if for every
  $M\in\mod\A$ there is an equality $\Ext_\A^{>0}(M,P_q)=0$.
  Equivalently, noting that $\gldim\A\leq d+1$, the $\A$-module $P_q$
  is injective if and only if for every exact sequence
  \[
    \begin{tikzcd}[column sep=small]
      0\rar&P_{a_{d+1}}\rar&\cdots\rar&P_{a_1}\rar&P_{a_0}\rar&M\rar&0
    \end{tikzcd}
  \]
  the sequence
  \[
    \begin{tikzcd}[column sep=small]
      0\rar&(M,P_q)\rar&(P_{a_0},P_q)\rar&(P_{a_1},P_q)\rar&\cdots\rar&(P_{a_{d+1}},P_q)\rar&0
    \end{tikzcd}
  \]
  is exact.  The claim follows immediately from Yoneda's
  embedding. 
 
  \eqref{it:preserves_d-kernels}$\Rightarrow$\eqref{it:injective} This is
  clear, since every monomorphism in the $d$-abelian category $\A$ is
  the left-most morphism in a $d$-exact sequence.
\end{proof}

\section{Homological Auslander correspondence}
\label{sec:auslander_correspondence_h}

In this section we give a proof of
\th\ref{auslander_correspondence_h}. We only deal with the first
sequence of equivalences since the second sequence follows by duality.

\begin{proof}[Proof of \th\ref{auslander_correspondence_h}]
  \eqref{it:d-abelian_h}$\Rightarrow$\eqref{it:d-auslander_h} Let $\A$
  be a $d$-abelian dualizing $\kk$-variety with $d$-cosyzygies.  The
  fact that $\A$ is $d$-Auslander dualizing $\kk$-variety follows from
  \th\ref{auslander_correspondence}. Let $a\in\A$. By assumption, there
  exists a $d$-exact sequence
  \[
    \begin{tikzcd}[column sep=small]
      0\rar&a\rar&q^0\rar&\cdots\rar&q^{d-1}\rar&\Omega^{-d}a\rar&0
    \end{tikzcd}
  \]
  where $q^0,\dots q^{d-1}$ are injective objects in $\A$. By
  definition, there is an exact sequence in $\mod\A$ of the form
  \[
    \begin{tikzcd}[column sep=small, row sep=small]
      0\rar&P_a\rar&P_{q^0}\rar&\cdots\rar&P_{q^{d-1}}\ar{rr}\drar[two heads]&&P_{\Omega^{-d}a}\rar&M\rar&0\\
      &&&&&\Omega^{-d}P_a\urar[tail]
    \end{tikzcd}
  \]
  where $P_{q^0},\dots,P_{q^{d-1}}$ are injective $\A$-modules by
  \th\ref{injectives}. Finally, \th\ref{epi_perp} implies
  $M\in\lperp{}{\A_\A}$. Therefore
  $\Omega^{-d}(\A_\A)\subseteq\Omega(\lperp{}\A_\A)$.

  \eqref{it:d-auslander_h}$\Rightarrow$\eqref{it:d-ct_h} Let $\A$ be a
  $d$-Auslander dualizing $\kk$-variety satisfying
  $\Omega^{-d}(\A_\A)\subseteq\Omega(\lperp{}\A_\A)$. We use the
  notation of Section \ref{sec:auslander_correspondence}. Thus, we
  denote by $\Q=\Q_\A$ the full subcategory of
  $\A_{\A}\subseteq\mod\A$ of projective-injective $\A$-modules and
  define
  \[
    \B:=\setP{b\in\A}{D(P_b^*)\in\Q_\A}.
  \]
  Recall that the fully faithful functor
  \begin{equation*}
    \begin{tikzcd}[column sep=small]
      \FF\colon\mod\A\rar&\mod\B
    \end{tikzcd}
  \end{equation*}
  defined by $\FF M:=M|_\B$ induces an equivalence
  $\Q_\A\to D(\B_{\B^\op})$, see \th\ref{FG}. Moreover, by (the proof
  of) \th\ref{auslander_correspondence}, we know that $\M:=\FF\A$ is a
  $d$-cluster-tilting subcategory of $\mod\B$.  It remains to show
  that $\Omega^{-d}(\M)\subset\M$. Let $a\in\A$. By assumption, there
  exists an exact sequence in $\mod\A$ of the form
  \[
    \begin{tikzcd}[column sep=small, row sep=small]
      0\rar&P_a\rar&P_{q^0}\rar&\cdots\rar&P_{q^{d-1}}\ar{rr}\drar[two heads]&&P_{\Omega^{-d}a}\rar&M\rar&0\\
      &&&&&\Omega^{-d}P_a\urar[tail]
    \end{tikzcd}
  \]
  where $P_{q^0},\dots,P_{q^{d-1}}$ are injective $\A$-modules and
  $M\in\lperp{}\A_\A$. Applying the exact functor $\FF$ yields an
  exact sequence
  \[
    \begin{tikzcd}[column sep=small, row sep=small]
      0\rar&\FF P_a\rar&\FF P_{q^0}\rar&\cdots\rar&\FF P_{q^{d-1}}\ar{rr}\drar[two heads]&&\FF P_{\Omega^{-d}a}\rar&\FF M\rar&0\\
      &&&&&\FF \Omega^{-d}P_a\urar[tail]
    \end{tikzcd}
  \]
  where $\FF P_{q^0},\dots,\FF P_{q^{d-1}}$ are injective
  $\B$-modules. Thus, $\FF\Omega^{-d}P_a\cong\Omega^{-d}\FF P_a$. We
  claim that $\FF M=0$, hence
  $\Omega^{-d}\FF P_a\cong\FF\Omega^{-d}P_a\cong\FF
  P_{\Omega^{-d}a}\in\M$.
  Equivalently, we show that $\FF P_{q^{d-1}}\to\FF P_{\Omega^{-d}a}$
  is an epimorphism. We need to show that for every $b\in\B$ each
  morphism $P_b\to P_{\Omega^{-d}a}$ factors through
  $P_{q^{d-1}}\to P_{\Omega^{-d}a}$. The situation can be visualized
  in the commutative diagram
  \[
    \begin{tikzcd}[column sep=small]
      &P_b\dar{\forall}\dlar[dotted,swap]{\exists}\\
      P_{q^{d-1}}\rar&P_{\Omega^{-d}a}\rar&M\rar&0
    \end{tikzcd}
  \]
  where the bottom row is exact.  Applying the $\A$-duality to this
  diagram yields the commutative diagram
  \[
    \begin{tikzcd}[column sep=small]
      &P_b^*\dar[leftarrow]{\forall}\dlar[leftarrow,dotted,swap]{\exists}\\
      P_{q^{d-1}}^*\rar[leftarrow]&P_{\Omega^{-d}a}^*\rar[leftarrow]&M^*=0
    \end{tikzcd}
  \]
  where the bottom row is exact (recall that $M\in\lperp{}\A_\A$).
  Thus, it is enough to show that $P_b^*$ is an injective
  $\A^\op$-module. Indeed, $P_\B^*=D\Q_\A=\Q_{\A^\op}$ consists of
  projective-injective $\A^\op$-modules.

  The implication \eqref{it:d-ct_h}$\Rightarrow$\eqref{it:d-abelian_h}
  follows from \th\ref{d-ct_h-d-_abelian_h} since $d$-cluster-tilting
  subcategories are functorially finite, hence dualizing
  $\kk$-varieties by \th\ref{dkv_ff}. This finishes the proof of the
  theorem.
\end{proof}

\section{Examples}
\label{sec:examples}

In this section we provide a fundamental class of examples of
$d\ZZ$-cluster-tilting subcategories.

Let $\kk$ be a field and $A$ a $d$-representation-finite algebra in
the sense of \cite{iyama_$n$-representation-finite_2011}. Thus, $A$ is
a finite dimensional $\kk$-algebra of global dimension $d$ and there
exist a finite dimensional (right) $A$-module $M$ such that
$\M:=\add\M$ is a $d$-cluster-tilting subcategory of $\mod A$. We let
$\Gamma:=\underline{\End}_A(M)$ be the projectively-stable
endomorphism algebra of $M$.  It is shown in
\cite[Thm. 1.21]{iyama_cluster_2011} that
\[
  \U_d(A):=\add\setP{X[d\ell]\in\operatorname{D^b}(\mod
    A)}{X\in\M\text{ and }\ell\in\ZZ}.
\]
is a $d$-cluster-tilting subcategory of $\operatorname{D^b}(\mod A)$.
It is known that $\U_d(A)$ is a dualizing $\kk$-variety and that
$\mod\U_d(A)$ is a Frobenius abelian category such that there is an
equivalence of triangulated categories
\[
  \begin{tikzcd}
	\underline{\mod}\,\U_d(A)\rar{\sim}&\operatorname{D^b}(\mod\Gamma),
  \end{tikzcd}
\]
see \cite[Props. 2.10 and 2.11]{iyama_mutation_2008} and
\cite[Coros. 3.7 and 4.10]{iyama_stable_2013}. Thus, if $\Gamma$ is a
$(d+1)$-representation finite algebra, then $\U_{d+1}(\Gamma)$ induces
a $(d+1)$-cluster-tilting subcategory of $\mod\U_d(A)$ which is
readily seen to be $d\ZZ$-cluster-tilting. This is the case, for example, if $A$
is a $d$-representation-finite algebra of type $\mathbb{A}$ in the
sense \cite[Sec. 5]{iyama_$n$-representation-finite_2011}.

These examples are used in \cite{darpo_selfinjective_2015} for
constructing selfinjective finite dimensional algebras with
$d$-cluster-tilting subcategories with additive generators by
extending the methods of Riedtmann \cite{riedtmann_algebren_1980}. We
also refer the reader to \cite{jasso_higher_2015} where families of
$d\ZZ$-cluster-tilting subcategories are constructed based
partly on the methods of \cite{darpo_selfinjective_2015}.


\bibliographystyle{alpha} \bibliography{zotero}

\end{document}